\documentclass[english,11pt]{amsart}
\usepackage[T1]{fontenc}
\usepackage[latin9]{inputenc}
\usepackage{geometry}
\geometry{verbose,tmargin=2.5cm,bmargin=2.5cm,lmargin=2cm,rmargin=2cm}
\usepackage{color}
\usepackage{mathtools}
\usepackage{amsmath}
\usepackage{amsthm}
\usepackage{amssymb}
\usepackage{esint}
\usepackage{float}
\usepackage{dcolumn}
\usepackage{graphicx}

\theoremstyle{plain}
\newtheorem{theorem}{Theorem}
\newtheorem{claim}[theorem]{Claim}
\newtheorem{proposition}[theorem]{Proposition}
\newtheorem{corollary}[theorem]{Corollary}
\newtheorem{lemma}[theorem]{Lemma}

\newtheorem{remark}{Remark}
\usepackage{comment}
\usepackage{subfig}
\usepackage{booktabs}
\usepackage[colorlinks=true, allcolors=blue]{hyperref}

\makeatletter
\providecommand{\tabularnewline}{\\}
\numberwithin{figure}{section}

\providecommand{\tabularnewline}{\\}
\floatstyle{ruled}
\newfloat{algorithm}{tbp}{loa}
\providecommand{\algorithmname}{Algorithm}
\floatname{algorithm}{\protect\algorithmname}
\usepackage{algorithm,algpseudocode}


\definecolor{mygreen}{rgb}{0.1,0.75,0.2}

\providecommand{\bbs}[1]{\left(#1\right)}

\DeclareMathOperator{\st}{s.t.~}

\newcommand{\pt}{\partial}

\newcommand{\ud}{\,\mathrm{d}}

\newcommand{\tL}{\tilde{L}}

\makeatother

\numberwithin{equation}{section}
\usepackage{babel}
\begin{document}
\title{Mean Field Games for Controlling Coherent Structures in Nonlinear Fluid Systems}
\author{Yuan Gao\,\,  and \,\,  Di Qi}
\address{Department of Mathematics, Purdue University, West Lafayette, IN}
\email{gao662@purdue.edu, qidi@purdue.edu}

\keywords{mean field control, nonlinear vorticity flow, Lagrangian tracers, iterative algorithm}
\maketitle
\begin{abstract}
This paper discusses the control of coherent structures in turbulent flows, which has broad applications among complex systems in science and technology.
Mean field games have been proved a powerful tool and are proposed here to control the stochastic Lagrangian tracers as players tracking the flow field.
We  derive optimal control solutions for general nonlinear fluid  systems using mean field game models, and develop computational algorithms to efficiently solve the resulting coupled forward and backward mean field system. A precise link is established for the control of Lagrangian tracers and the scalar vorticity field based on the functional Hamilton-Jacobi equations derived from the mean field models. New iterative numerical strategy is then constructed to compute the optimal solution with fast convergence.
We verify the skill of the mean field control models and illustrate their practical efficiency on a prototype model modified from the viscous Burger's equation under various cost functions in both deterministic and stochastic formulations. The good model performance implies potential effectiveness of the strategy for more general high-dimensional turbulent systems.
\end{abstract}


%

\section{Introduction and background}
Control of  complex fluid systems characterized by a wide multiscale spectrum and nonlinear coupling between different scales remains a grand challenge with crucial applications among many fields of science and engineering \cite{brunton2015closed,macmartin2014dynamics,kim2007linear,anderson2007optimal}.
Flow fields undergoing turbulent motions often demonstrate large-scale coherent structures, and control of the important transporting behaviors 
requires dealing with the coherent structures demonstrating strong nonlinear interactions among multiple scales \cite{majda2017effective,kooloth2021coherent,covington2023effective}. In addition, the Eulerian flow field can be captured by Lagrangian tracers advected by the background flow. Control of the multiscale flows through the  Lagrangian tracers sets a useful alternative approach in many practical scenarios \cite{majda1999simplified,apte2008bayesian,garcia2022lagrangian,chen2023launching}.  New precise theoretical analysis and effective control strategies are still needed considering the nonlinear dynamics and high computational demand.

In recent years, mean field game (MFG) theory was proposed independently by \textsc{Larsy-Lions} \cite{lasry2007mean} and \textsc{Caines-Huang-Malhame} \cite{huang2006large} to study a  large game  system with identical players using the dynamics of their  population/distribution. Indistinguishable individual player takes strategy/control based on the population states (without observing all the  strategies of other players), meanwhile the distribution of individuals yields the population dynamics. The equilibrium, also known as the Nash system, is solved  by a \emph{coupled MFG system}   consisting of a forward Fokker-Planck equation (describing the evolution of the density/population of the individuals) and a backward Hamilton-Jacobi equation (describing the control/strategy of each indistinguished player); cf. \cite{achdou2021mean}. Due to this simplification for large game system, MFG   has been proved to be a powerful tool
to study  the   equilibrium behavior of infinitely many weakly-interacting players \cite{cardaliaguet2019master,cousin2011mean}. The associated mean-field control problem also stimulates many applications in swarm drones, generative models, transition path theory and mathematical finance \cite{carmona2018probabilistic, li2022computational, gao2023transition, zhang2023mean}.

In this paper, we propose to address the challenging problem of controlling nonlinear  transport of coherent structures by developing models of mean field games. 
First, we introduce the mean field game model for the control of \emph{passive scalar tracers}. The stochastic motion of the Lagrangian tracers, which are immersed in the fluid flow field and passively advected by the flow velocity, can thus be viewed as the large number of identical players in the mean field games. Therefore, the dynamical behavior of the tracer density and the optimal control on each of the identical Lagrangian tracers can be solved by the corresponding MFG system with uncoupled forward and backward equations. 
Next, we design control models for the transport of representative flow structures in the advection and diffusion of a \emph{scalar vorticity field} under a general formulation. The Lagrangian tracers as players tracking the fluid vorticity field then become nonlocally coupled by their accumulated density function. The evolution continuum density of the tracers shows to coincide with the vorticity equation recovered by the McKean-Vlasov equation \cite{mckean1966class}. Thus, the flow solutions can be inversely captured by the empirical distribution of the large number of Lagrangian tracers (players). An MFG system of closely coupled forward and backward equations are derived for the optimal solution of the flow control problem.
In addition, the corresponding functional Hamilton-Jacobi equations are derived describing the evolution of the value function in both the tracer and flow control models. This leads to an interesting link between the uncoupled tracer  MFG system and the coupled vorticity MFG system in a unified way.

The nonlinear nature of vorticity equation makes the corresponding Hamiltonian non-separable and thus new decouple procedures are needed.
A series of numerical strategies have been developed for solving the coupled MFG equations \cite{achdou2010mean,lauriere2021numerical}. Methods based on a fixed-point iteration \cite{carlini2014fully,gueant2012mean} have been used to decouple the forward and backward equations with semi-Lagrangian schemes and  the Cole-Hopf transformation to convert to linear equations. One strategy named fictitious play \cite{cardaliaguet2017learning,inoue2023fictitious} is shown to help the convergence of the fixed-point iteration using the average of the entire history of the outputs. Another approach uses the primal-dual hybrid gradient method \cite{chambolle2011first,li2022controlling} to solve the control problem as a saddle point problem.  Usually, these methods require transformation of the original equations in the first step and may require a large number of iterations in numerical simulations.

Based on the typical structure of the nonlinear flow model, we propose a new strategy to efficiently find the optimal solution of the coupled MFG system without any special treatment to the original equations.  The numerical method is based on the link developed for the uncoupled tracer MFG system and the coupled flow MFG equations, where the optimal solution for the coupled equations becomes a fixed point of the solution map for the uncoupled problem. This inspires an effective iterative approach by first solving the backward equation given the output solution from the previous step, then using the optimal control function to solve the density function from the forward equation. 
Stability and fast convergence of the proposed iterative scheme are further guaranteed by a straightforward interpolation step  during each iteration. We show the necessity of adopting this essential interpolation step to effectively reduce the cost function value in each iteration cycle. This step also shows to enable fast convergence during the iterating updates. 
The model performance and the effectiveness of the proposed numerical algorithm are tested on a representative prototype model adapted from the viscous Burger's equation simulating the multiscale vortical advection by an incompressible flow field. Using the new iterative approach, fast convergence to the optimal solution is observed within a few steps under different loss function structures and in both one- and two-dimensional cases.

The paper is organized as follows. A description for the tracer and flow transport equations in both deterministic and stochastic formulation is introduced in Section~\ref{sec:formu}. Mean field models as well as the associated systems for the control problems are constructed in Section~\ref{sec:control_models}. The mean field models can be written in Hamilton dynamics and a link between the tracer and flow control models is built in Section~\ref{sec:hje}.  Effective computational schemes corresponding to the theoretical models are then proposed in Section~\ref{sec:comp_strategy} with numerical confirmation on prototype models in Section~\ref{sec:numerics}. We conclude the paper with a summarizing discussion in Section~\ref{sec:summary}.

\section{Formulation of the fluid control equations}\label{sec:formu}
We start with the basic setups and notations for the flow system and the associated control problem. In particular, the Eulerian flow system has an equivalent stochastic Lagrangian formulation based on which the mean field games will be developed. 

\subsection{Governing equation for the transport of a scalar field}
Let $u\left(x,t\right)$ be the incompressible velocity field of
a fluid as a function on the Eulerian spatial coordinates $x\in\mathbb{T}^{d}$ 
 and time $t$. The fluid field can be modeled by the evolution of a \emph{scalar vorticity
field} $q\left(x,t\right)$  according to the standard advection-diffusion
equation 
\begin{equation}
\begin{aligned}
\frac{\partial q}{\partial t}+u\cdot\nabla q  =D\left(\Delta\right) q+f\left(x,t\right), \quad 
u\left(x,t\right) = \mathcal{T}q\left(x,t\right), \quad q\left(x,0\right)=q_{0}\left(x\right).\label{eq:vort_eqn}
\end{aligned}
\end{equation}
On the right hand side of the equation, the flow is  
subject to external forcing effect represented by $f\left(x,t\right)$
and generalized damping and dissipation effects $D\left(\Delta\right)$. 
The velocity field satisfies $\nabla\cdot u=0$ due to incompressibility. Therefore,  the vorticity field $q\left(x,t\right)$ is uniquely linked to the velocity field by the (invertible) linear operator, $u=\mathcal{T}q$, so that the flow solution is fully represented by the scalar vorticity transported by the flow velocity.
The scalar model can serve as a desirable general framework for developing theories and practical methods for turbulent transport and control of coherent structures \cite{majda2003introduction,pedlosky2013geophysical}. 
Applications of the general fluid advection-diffusion model \eqref{eq:vort_eqn} include wide classes  of turbulent flows in  realistic natural and engineering systems such as the passive tracer diffusion \cite{majda1999simplified}, turbulent transport in geophysical turbulence \cite{pedlosky2013geophysical,qi2018predicting}, and the anomalous transport in fusion plasmas \cite{Diamond2010,qi2019transient}. 

It is noticed that the above equation \eqref{eq:vort_eqn} usually permits a set of metastable steady state
solutions \cite{qi2018rigorous}. The steady states provide a natural characterization
of the `coherent structures' observed in the flow field. In addition, turbulent features will be developed in the solution of the vorticity flow \eqref{eq:vort_eqn}  (such as cascading small-scale fluctuating
vortices and strongly mixing structures) due to the inherent instability
and nonlinear coupling in the governing equation. Our attention will
be given to finding the `optimal' fluid transition undergoing some
transient motions from one 
initial state $Q_{i}$ to another final steady state
$Q_{f}$  by imposing `proper' external
forcing effect.

\begin{remark}
One motivating example of \eqref{eq:vort_eqn}  comes from the two-dimensional geophysical turbulence, where the potential vorticity $q=Z\left(\psi\right)$ is associated
with the flow streamfunction $\psi$  via a linear operator $Z$ and may include a variety of  physical
effects (for example,  $Z\left(\psi\right)=\Delta\psi+\beta y+h\left(x\right)$ including the rotation $\beta$ and the topography $h$ \cite{majda2006nonlinear}).
Then the corresponding velocity field can be determined by the stream function as $u=\nabla^{\bot}\psi=\left(-\partial_y \psi,\partial_x \psi\right)$ through a linear operator.
\end{remark}
\begin{remark}
The vorticity model can be generalized to three dimensions by introducing a multi-layer structure in the $z$-direction or using proper potential vorticity invertibility approximations \cite{pedlosky2013geophysical}. Still, we restrain ourselves in the simpler $d\leq 2$ cases for deriving the general modeling systems under the cleaner setup.
\end{remark}

\subsection{Stochastic Lagrangian equations for tracer transport}
From a stochastic perspective, the continuity equation of the flow field \eqref{eq:vort_eqn}
can be captured by the collective performance of a group of \emph{Lagrangian particles/tracers} $X_{s}$
transported by the flow velocity field. In addition, we introduce a \emph{control field} $\alpha_s$ as the additional driven forcing exerted on the flow/tracer field.
In the mean-field game context,  those particles are also called indistinguished individual players.

We introduce the fluid state (i.e., the population/vorticity) from the solution of the vorticity equation \eqref{eq:vort_eqn} as $q_s(\cdot),t\leq s\leq T$. The velocity field is then denoted as $u_s\left(x;q_s\right)=\mathcal{T}q_s\left(x\right)$ explicitly indicating the direct relation with $q_s$. Suppose for now that the control field $\alpha_s(x;q_s)$ is given dependent on the fluid motion, which will be determined by an optimization procedure as described next in Section~\ref{sec:control_models}.  Then the
equation for the trajectory of each of the particle is described by the stochastic differential equation (SDE)
\begin{equation}\label{eq:ctrl_sde}
\begin{aligned}
\mathrm{d}X_{s}  = \left[u_{s}\left(X_{s};q_{s}\right)+\alpha_{s}\left(X_{s};q_{s}\right)\right]\mathrm{d}s+\sqrt{2D}\mathrm{d}W_{s}, \quad t \leq s \leq T,\quad
 X_{t}  \sim \rho_{t}\left(x\right).
\end{aligned}
\end{equation}
Here $u_{s}\left(\cdot\,;q_{s}\right), \alpha_{s}\left(\cdot\,;q_{s}\right)$ indicate the possible   dependence on the fluid field $\lbrace q_s\rbrace_{t\leq s\leq T}$ for the transport velocity and the control. Molecular diffusion effect is represented by the Gaussian white noise with a diffusivity
coefficient $D$.
We seek to describe the probability measure, describing the law of individuals $X_s\sim\rho_s\left(x\right)$. The governing equation will satisfy the Kolmogorov forward  equation  for the SDE \eqref{eq:ctrl_sde}
\begin{equation}
\partial_{s}\rho_{s}\left(x\right)+\nabla\cdot\left[\left(u_{s}\left(x;q_{s}\right)+\alpha_{s}\left(x;q_{s}\right)\right)\rho_{s}\left(x\right)\right]=D\Delta \rho_{s}\left(x\right), \quad t \leq s \leq T,
\label{eq:KP}
\end{equation}
with the initial condition from the initial tracer configuration in the fluid field, $X_{t} \sim \rho_{t}\left(x\right)$. Notice that given a fluid vorticity state $q_s$, the law of individuals $\rho_s$ does not necessary coincide with $q_s$ (that is, the controlled tracer density field may not exactly track the evolution of the vorticity flow). 

In parallel to the Eulerian vorticity equation \eqref{eq:vort_eqn}, we can propose the external forcing in the following form acting the equivalent role of moving the tracers to the target density
\begin{equation}
f\left(x,s\right)=-\nabla\cdot\left[\alpha_{s}\left(x\right)q_{s}\left(x\right)\right],\label{eq:forcing}
\end{equation}
according to the same control effect $\alpha_{s}$ in the SDE formulation \eqref{eq:ctrl_sde}. The general damping term can be also simplified to the linear form $D\left(\Delta\right)=D\Delta$ accordingly.  The resulting flow equation becomes
\begin{equation}
\partial_{s}q_{s}\left(x\right)+\nabla\cdot\left[\left(u_{s}\left(x;q_{s}\right)+\alpha_{s}\left(x;q_{s}\right)\right)q_{s}\left(x\right)\right]=D\Delta q_{s}\left(x\right), \quad t \leq s \leq T,
\label{eq:ctrl_pde}
\end{equation}
exploiting the divergence free velocity field, $\nabla\cdot u_s=0$. 
Notice that the vorticity field $q_s$ is not passively advected by
the fluid field and actually determines the velocity field $u_s$.
Nevertheless, in the control problems discussed next, we can view the field
$q_s$ as \emph{a proper measure} on the state space since it
is solved by the continuity equation \eqref{eq:KP} up to a constant normalization constant.
In particular, if we set the initial condition $q_t=\rho_t$,  the flow vorticity field $q_s$ will track the tracer density measure in \eqref{eq:ctrl_sde} generated by the stochastic samples advected by the velocity field $u_{s}\left(x;q_{s}\right)$.
The equivalence between the controlled flow equation \eqref{eq:ctrl_pde} and the SDE for an ensemble of tracers implies the possibility to control the key flow structures by acting on the measurements of Lagrangian particles.


\section{Control of the fluid fields with mean field games}\label{sec:control_models}
Here, we propose the control models for the transport of both the tracer density and the fluid vorticity fields. We first formulate the problems based on the PDEs. Then, the corresponding stochastic control for the Lagrangian particles can be formulated in an equivalent way.

\subsection{Control of the tracer density field with a given fluid solution }
First, we propose the mean-field game model (MFG-1) concerning \textit{a mean-field game for indistinguished individuals with a given fluid dynamics.}
Given the flow field vorticity solution $q_s\left(x\right),t\leq s \leq T$, consider the optimal control problem about the value function on the optimal solutions $\rho\left(\cdot\right)\coloneqq\rho_{s}\left(x\right),\alpha\left(\cdot\right)\coloneqq\alpha_{s}\left(x\right), s\in\left[t,T\right], x\in\mathbb{T}^d$
\begin{equation}\label{eq:loss_tracer1}
\begin{aligned}
\mathcal{U}\left(\rho,t\right) & \coloneqq  \inf_{\rho(\cdot),\alpha(\cdot)}\mathcal{J}\left(\rho(\cdot),\alpha(\cdot)\right) =\inf_{\rho(\cdot),\alpha(\cdot)}  \bbs{\int G(x,q_T) \rho_{T} \left(x\right)\mathrm{d}x +\int_{t}^{T}\left\{ \int [L\left(\alpha_{s}\left(x\right )\right) +F\left(x,q_{s} \right)]\rho_{s}\left(x\right)\mathrm{d}x\right\} \mathrm{d}s }, \\
\st & \,\, \partial_{s}\rho_{s}  +\nabla\cdot\left[\left(\mathcal{T}q_{s}+\alpha_{s}\right)\rho_{s}\right]=D\Delta \rho_{s},\quad t\leq s\leq T,\quad \mathrm{and}\; \rho_{t}\left(x\right)=\rho\left(x\right).
\end{aligned}
\end{equation}
Here, the \emph{running cost} $L\left(\alpha\right)$ depends on the individual control action; the \emph{individual running cost} $F(x,q)$ represents the running cost interacting with the fluid field; and the \emph{terminal cost} $G(x,q)$ depends on the terminal flow state. The dependence on fluid field can be either globally or locally, and we will describe the specific forms of these functionals later.
Here, the solution $\rho_{s}$ of the continuity equation \eqref{eq:loss_tracer1} can be viewed as the density field of the controlled tracers \eqref{eq:ctrl_sde}, driven by the velocity field $u_s$ generated by the given flow vorticity solution $\mathcal{T}q_{s}$. We will refer the optimization problem \eqref{eq:loss_tracer1} as \textbf{MFG-1 model}.  The solution of MFG-1 model describes the optimal control of the tracer density from the initial configuration $\rho$ to the final target $\rho_{T}$.

The following proposition for the Euler-Lagrange equations describes the optimal solution of the MFG-1 model given a fixed flow vorticity solution $q_s$.
\begin{proposition}
Given the flow vorticity field $q_s\left(x\right),s\in \left[t,  T\right]$, the optimal tracer density $\rho_{s}\left(x\right)$ under the cost function \eqref{eq:loss_tracer1} with $L\left(\alpha\right)=\frac{1}{2}\vert\alpha\vert^2$ is given by the solution of the following \textbf{MFG-1 system}
\addtocounter{equation}{0}\begin{subequations}\label{eq:mfg1}
\begin{align}
\partial_{s}\rho_{s}+\nabla\cdot\left[\left(\mathcal{T}q_{s}+\nabla\phi_{s}\right)\rho_{s}\right] & =D\Delta \rho_{s}, \quad t\leq s\leq T, \label{eq:fpe1}\\
   \rho_{t}\left(x\right)&=\rho\left(x\right),\nonumber\\
\partial_{s}\phi_{s}+\frac{1}{2}\left|\nabla\phi_{s}\right|^{2}+\nabla\phi_{s}\cdot\mathcal{T}q_{s}+ D\Delta\phi_{s}  & = F(x,q_s), \quad t\leq s\leq T, \label{eq:hje1}\\
\phi_{T}\left(x\right)&=-G(x,q_T).\nonumber
\end{align}
\end{subequations}
The corresponding optimal control can be solved by $\alpha_{s}\left(x\right) = \nabla \phi_{s}\left(x\right)$.
\end{proposition}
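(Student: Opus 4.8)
The plan is to read \eqref{eq:loss_tracer1} as a PDE-constrained optimization problem, with the Fokker--Planck equation playing the role of the constraint, and to obtain the system \eqref{eq:mfg1} as its first-order (Euler--Lagrange) conditions, working formally at the level of smooth periodic solutions with the flow $q_s$ held fixed. First I would introduce a Lagrange multiplier $\phi_s(x)$ for the continuity constraint and form
\[
\mathcal{L}(\rho,\alpha,\phi)=\mathcal{J}(\rho,\alpha)+\int_{t}^{T}\!\!\int_{\mathbb{T}^d}\phi_s\,\bbs{\partial_s\rho_s+\nabla\cdot\big[(\mathcal{T}q_s+\alpha_s)\rho_s\big]-D\Delta\rho_s}\,\mathrm{d}x\,\mathrm{d}s .
\]
Integrating by parts in $s$ and in $x$ (all spatial boundary terms vanish on $\mathbb{T}^d$) transfers every derivative onto $\phi$, producing the interior integrand $\rho_s\big(\partial_s\phi_s+\nabla\phi_s\cdot(\mathcal{T}q_s+\alpha_s)+D\Delta\phi_s\big)$ together with the temporal boundary contribution $\int\phi_T\rho_T\,\mathrm{d}x-\int\phi_t\rho\,\mathrm{d}x$, the last term being constant because $\rho_t=\rho$ is prescribed.

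Next I would impose stationarity of $\mathcal{L}$ in each argument. The control $\alpha$ enters only through $\int_{t}^{T}\!\int\big(L(\alpha_s)-\nabla\phi_s\cdot\alpha_s\big)\rho_s$, so on $\{\rho_s>0\}$ the optimum is the pointwise minimizer $\alpha_s=\argmin_{a}\big(L(a)-\nabla\phi_s\cdot a\big)$, which for $L(\alpha)=\tfrac12|\alpha|^2$ is exactly $\alpha_s=\nabla\phi_s$. Substituting this feedback, the interior variation in $\rho$ ($t<s<T$) forces $\partial_s\phi_s+\nabla\phi_s\cdot\mathcal{T}q_s+|\nabla\phi_s|^2+D\Delta\phi_s=\tfrac12|\nabla\phi_s|^2+F(x,q_s)$, which is \eqref{eq:hje1} after collecting the quadratic terms; the variation in $\rho_T$ of $\int(G+\phi_T)\rho_T\,\mathrm{d}x$ gives $\phi_T=-G(x,q_T)$; and the variation in $\phi$ returns the constraint \eqref{eq:fpe1} with $\alpha_s$ replaced by $\nabla\phi_s$, supplemented by the prescribed $\rho_t=\rho$. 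Assembling these identities is precisely the MFG-1 system with optimal control $\alpha_s=\nabla\phi_s$.

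The step that genuinely requires an argument, rather than formal manipulation, is to upgrade ``critical point'' to ``minimizer''. I would do this by the usual convexification: set $m_s=\alpha_s\rho_s$, so $\int L(\alpha_s)\rho_s\,\mathrm{d}x=\int\frac{|m_s|^2}{2\rho_s}\,\mathrm{d}x$, which is jointly convex in $(\rho_s,m_s)$ (the perspective of $|\cdot|^2/2$), while the remaining cost terms are linear in $\rho$ and the constraint $\partial_s\rho_s+\nabla\cdot(\mathcal{T}q_s\rho_s+m_s)=D\Delta\rho_s$ is affine in $(\rho,m)$. Hence \eqref{eq:loss_tracer1} is a convex program, the first-order conditions above are also sufficient, and undoing the substitution gives $\alpha_s=m_s/\rho_s=\nabla\phi_s$ with $(\rho_s,\phi_s)$ solving \eqref{eq:mfg1}. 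Equivalently, one may argue by verification: using the equation for $\phi$ and completing the square in $\alpha$ shows $\mathcal{J}(\rho,\alpha)\ge -\int\phi_t\rho\,\mathrm{d}x$ for every admissible pair, with equality exactly when $\alpha_s=\nabla\phi_s$; the only delicate point there is to justify the integrations by parts and the use of $q_s$, $\rho_s$ as (sub-probability) measures, which is the regularity caveat already flagged after \eqref{eq:ctrl_pde}.
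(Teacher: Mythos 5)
Your derivation is correct and follows essentially the same route as the paper: introduce a Lagrange multiplier $\phi_s$ for the continuity constraint, take variational derivatives in $\phi$, $\alpha$, $\rho$ and $\rho_T$, and read off the forward equation, the feedback $\alpha_s=\nabla\phi_s$, the backward HJE and the terminal condition. Your additional step upgrading the critical point to a genuine minimizer (via the convex reformulation in $(\rho,m)=(\rho,\alpha\rho)$, or the verification/completing-the-square argument) goes beyond the paper's proof, which stops at the first-order conditions, and is a welcome strengthening rather than a divergence in method.
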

\begin{proof}
We derive the Euler-Lagrangian equations for the MFG-1 model \eqref{eq:loss_tracer1} based on the proposed cost functionals.
Introduce the Lagrangian multiplier $\phi_{s}$
as 
\begin{align*}
\mathcal{E}\left(\phi_{s},\alpha_{s},\rho_{s},\rho_{T}\right)= & \int_{t}^{T} \int\left\{\phi_{s}\left(x\right)\left[\partial_{s}\rho_{s}\left(x\right)+\nabla\cdot\left(\left(\mathcal{T}q_{s}\left(x\right)+\alpha_{s}\left(x\right)\right)\rho_{s}\left(x\right)\right)-D\Delta \rho_{s}\left(x\right)\right]\right.\\
 & \left.+\frac{1}{2}\left|\alpha_{s}\left(x\right)\right|^{2}\rho_{s}\left(x\right)+   F\left(x,q_{s} \right) \rho_s\left(x\right)  \right\}\mathrm{d}x \mathrm{d}s+\int G\left(x,q_{T}\right)\rho_T\left(x\right)\mathrm{d}x.
\end{align*}
The Euler-Lagrangian equations can be found by taking
the variational derivatives on the input functions, so that
\[
\begin{aligned}\frac{\delta\mathcal{E}}{\delta\phi_{s}}= & \partial_{s}\rho_{s}+\nabla\cdot\left[\left(\mathcal{T}q_{s}+\alpha_{s}\right)\rho_{s}\right]-D\Delta \rho_{s}=0,\\
\frac{\delta\mathcal{E}}{\delta\alpha_{s}}= & \rho_{s}\alpha_{s}-\rho_{s}\nabla\phi_{s}=0,\\
\frac{\delta\mathcal{E}}{\delta \rho_{s}}= & \frac{1}{2}\left|\alpha_{s}\right|^{2}-\nabla\phi_{s}\cdot\left(\mathcal{T}q_{s}+\alpha_{s}\right)+ F\left(x,q_{s} \right)-\partial_{s}\phi_{s}-D\Delta\phi_{s}=0,\\
\frac{\delta\mathcal{E}}{\delta \rho_{T}}= & G\left(x,q_{T} \right)+\phi_{T}=0.
\end{aligned}
\]
The first equation gives the continuity equation for the controlled flow dynamics. The optimal control forcing $\alpha_s=\nabla \phi_s$ is discovered by the second equation provided $\rho_s>0$. The third and fourth equations yield the backward equation for the Lagrangian multiplier $\phi_s$ with its terminal condition. Thus we have a decoupled MFG system \eqref{eq:mfg1} for MFG-1 model.
\end{proof}

\subsection{Control of the vorticity flow field as a potential mean field game}
Next, we design the control of the vorticity flow stated in \eqref{eq:vort_eqn}
as a mean field game (MFG) system. In particular, we are seeking the
control forcing in the specific form as \eqref{eq:forcing}, 
where $\alpha_{s}$ is the additional control vector that aims
to drive the initial state to the final target. Combining with the
original vorticity equation \eqref{eq:vort_eqn}, we find the control
PDE associated with the control forcing \eqref{eq:forcing}
\begin{equation}
\partial_{s}q_{s}+\nabla\cdot\left[\left(u_{s}+\alpha_{s}\right)q_{s}\right]=D\Delta q_{s}, \quad t \leq s \leq T
\label{eq:continuity}
\end{equation}
with the prescribed initial state $q\mid_{s=t}=q\left(x\right)$.
Above, we define $u_{s}=\mathcal{T}q_{s}$ for the incompressible flow $\nabla\cdot u_{s}=0$.
The equation describes the control of the vorticity flow field $q_{s}$
from the initial state $q$ at $s=t$ to a final target
field $q_{T}$ at $s=T$ through the control $\alpha$.

Here, we consider the potential mean-field game model (MFG-2) concerning the control of the fluid vorticity field. Instead of controlling the tracer density field $\rho_{s}$ advected by the flow field, we consider the direct optimal control of the flow vorticity state
\begin{equation}\label{eq:loss_pde}
\begin{aligned}
\mathcal{V}\left(q,t\right) & \coloneqq\inf_{q\left(\cdot\right),\alpha\left(\cdot\right)}\mathcal{I}\left(q\left(\cdot\right),\alpha\left(\cdot\right)\right)=\inf_{q\left(\cdot\right),\alpha\left(\cdot\right)} \bbs{ \mathcal{G}\left(q_{T}\right)+\int_{t}^{T}\left[ \int L\left(\alpha_{s}\right)q_{s}\left(x\right)\mathrm{d}x +\mathcal{F}\left(q_{s}\right)\right]\mathrm{d}s },\\
\st & \,\, \partial_{s}q_{s}  +\nabla\cdot\left[\left(\mathcal{T}q_{s}+\alpha_{s}\right)q_{s}\right]=D\Delta q_{s},\quad t\leq s\leq T,\quad \mathrm{and}\;q_{t}\left(x\right)=q\left(x\right). 
\end{aligned}
\end{equation}
We will refer the optimization problem \eqref{eq:loss_pde} as \textbf{MFG-2 model}.
In the above potential game model, we assume that the running cost $\mathcal{F}$ and  the terminal cost $\mathcal{G}$ satisfy  
 $F(x,q)=\frac{\delta \mathcal{F}}{\delta q}\left(q,x\right), \, G(x,q)=\frac{\delta \mathcal{G}}{\delta q}\left(q,x\right)$, as a connection to the MFG-1 model \eqref{eq:loss_tracer1}. 
In contrast to the MFG-1 model \eqref{eq:loss_tracer1}, the flow vorticity $q_s$ becomes the controlled state in the MFG-2 model rather than the tracer density $\rho_{s}$. This leads to the nonlinear continuity equation in \eqref{eq:loss_pde}.

Still, we can solve the optimization problem by deriving the Euler-Lagrangian equations. The following proposition provides the Euler-Lagrangian equations for the optimal solution of the MFG-2 model.
\begin{proposition}
The optimally controlled flow solution in MFG-2 model \eqref{eq:loss_pde} with $L\left(\alpha\right)=\frac{1}{2}\vert\alpha\vert^2$ is given by $q_s$ and the associated optimal control for the flow field is given by $\alpha_{s}=\nabla \varphi_{s}$, where $\left(q_s,\varphi_{s}\right), s \in \left[t,T\right]$ solves the following \textbf{MFG-2 system}
\addtocounter{equation}{0}\begin{subequations}\label{eq:mfg2} 
\begin{align}
\partial_{s}q_{s}+\nabla\cdot\left[\left(\mathcal{T}q_{s}+\nabla\varphi_{s}\right)q_{s}\right] & =D\Delta q_{s}, \quad t\leq s\leq T,\label{eq:fpe2}\\
   q_{t}\left(x\right)&=q\left(x\right),\nonumber\\
\partial_{s}\varphi_{s}+\frac{1}{2}\left|\nabla\varphi_{s}\right|^{2}+\nabla\varphi_{s}\cdot\mathcal{T}q_{s}+\mathcal{T}^{*}\cdot\left(\nabla\varphi_{s}q_{s}\right) + D\Delta\varphi_{s} & =F(x,q_s), \quad t\leq s\leq T, \label{eq:hje2}\\
\varphi_{T}\left(x\right)&=-G(x,q_T).\nonumber
\end{align}
\end{subequations}
\end{proposition}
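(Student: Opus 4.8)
The plan is to follow the same Lagrange-multiplier route as in the proof of the MFG-1 proposition, the only---but essential---difference being that the constraint is now the \emph{nonlinear} continuity equation \eqref{eq:continuity}, in which the advecting velocity $u_s=\mathcal{T}q_s$ itself depends on the controlled state $q_s$. First I would introduce a multiplier field $\varphi_s(x)$ and form the augmented functional
\begin{align*}
\mathcal{E}\left(\varphi_s,\alpha_s,q_s,q_T\right)= & \int_t^T\int\varphi_s\left[\partial_s q_s+\nabla\cdot\left(\left(\mathcal{T}q_s+\alpha_s\right)q_s\right)-D\Delta q_s\right]\mathrm{d}x\,\mathrm{d}s\\
 & +\int_t^T\left[\int\tfrac12\left|\alpha_s\right|^2 q_s\,\mathrm{d}x+\mathcal{F}\left(q_s\right)\right]\mathrm{d}s+\mathcal{G}\left(q_T\right),
\end{align*}
and then set to zero the variational derivatives with respect to $\varphi_s,\alpha_s,q_s$ and $q_T$.

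The routine parts are as in MFG-1: $\delta\mathcal{E}/\delta\varphi_s=0$ returns the forward equation \eqref{eq:fpe2}; $\delta\mathcal{E}/\delta\alpha_s=0$ gives $q_s\alpha_s=q_s\nabla\varphi_s$, hence $\alpha_s=\nabla\varphi_s$ wherever $q_s>0$ (recall $q_s$ is treated as a nonnegative measure, as discussed after \eqref{eq:ctrl_pde}); and $\delta\mathcal{E}/\delta q_T=0$, together with the $s=T$ boundary term produced by integrating $\int\varphi_s\partial_s q_s$ by parts in time, gives $\varphi_T+\frac{\delta\mathcal{G}}{\delta q}(q_T)=0$, i.e.\ the terminal condition $\varphi_T=-G(x,q_T)$.

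The heart of the computation is $\delta\mathcal{E}/\delta q_s=0$. Using incompressibility $\nabla\cdot\mathcal{T}q_s=0$ I would first rewrite $\int\varphi_s\nabla\cdot\left(\left(\mathcal{T}q_s\right)q_s\right)\mathrm{d}x=-\int\nabla\varphi_s\cdot\left(\mathcal{T}q_s\right)q_s\,\mathrm{d}x$; varying $q_s$ in this term produces \emph{two} contributions, one from the explicit factor $q_s$, giving $-\nabla\varphi_s\cdot\mathcal{T}q_s$, and one from the argument of $\mathcal{T}$, which after moving $\mathcal{T}$ onto the test function through its $L^2$-adjoint $\mathcal{T}^*$ yields the extra term $-\mathcal{T}^*\cdot\left(\nabla\varphi_s q_s\right)$. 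This second term is exactly what is absent in the MFG-1 system and is what encodes the nonlinear self-advection. Assembling this with the time-derivative contribution $-\partial_s\varphi_s$, the diffusion contribution $-D\Delta\varphi_s$ (from integrating $-D\int\varphi_s\Delta q_s$ by parts twice), the control contributions $-\nabla\varphi_s\cdot\alpha_s+\tfrac12|\alpha_s|^2$, and $\frac{\delta\mathcal{F}}{\delta q}=F(x,q_s)$, then substituting the optimal $\alpha_s=\nabla\varphi_s$ (so that $-\nabla\varphi_s\cdot\alpha_s+\tfrac12|\alpha_s|^2=-\tfrac12|\nabla\varphi_s|^2$) and changing the overall sign, one recovers precisely the backward equation \eqref{eq:hje2}.

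I expect the main obstacle to be the careful bookkeeping in the $q_s$-variation---tracking the two places $q_s$ enters $\nabla\cdot((\mathcal{T}q_s)q_s)$ and correctly identifying $\mathcal{T}^*$ (for the geophysical example $u=\nabla^{\perp}\psi$ this adjoint is again a skew-type operator, whence the divergence-like pairing notation $\mathcal{T}^*\cdot(\,\cdot\,)$). I would also stress that the potential-game hypotheses $F=\delta\mathcal{F}/\delta q$ and $G=\delta\mathcal{G}/\delta q$ are exactly what let the $\mathcal{F}$- and $\mathcal{G}$-variations collapse to the local functions $F(x,q_s)$ and $G(x,q_T)$, so that the Euler--Lagrange system closes in the stated form; and, as in MFG-1, these are first-order (necessary) optimality conditions, with uniqueness or sufficiency requiring convexity of $L,\mathcal{F},\mathcal{G}$ that is not asserted here.
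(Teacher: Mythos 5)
Your proposal follows essentially the same route as the paper's own proof: the same augmented functional with Lagrange multiplier $\varphi_s$, the same variational derivatives in $\varphi_s,\alpha_s,q_s,q_T$, and the same identification of the extra nonlinear term $\mathcal{T}^{*}\cdot\left(\nabla\varphi_{s}q_{s}\right)$ arising from varying $q_s$ inside $\mathcal{T}q_s$. The bookkeeping you outline (incompressibility, integration by parts, substitution of $\alpha_s=\nabla\varphi_s$, and the potential-game relations $F=\delta\mathcal{F}/\delta q$, $G=\delta\mathcal{G}/\delta q$) reproduces the paper's derivation correctly.
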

\begin{proof}
To derive the MFG-2 system for the problem \eqref{eq:loss_pde}
based on the proposed cost functionals,
 we introduce the Lagrangian multiplier $\varphi_{s}$
as
\begin{align*}
\mathcal{E}\left(\varphi_{s},\alpha_{s},q_{s},q_{T}\right)= & \int_{t}^{T}\left\{ \int\left[\varphi_{s}\left(x\right)\left[\partial_{s}q_{s}\left(x\right)+\nabla\cdot\left(\left(\mathcal{T}q_{s}\left(x\right)+\alpha_{s}\left(x\right)\right)q_{s}\left(x\right)\right)-D\Delta q_{s}\right]\right.\right.\\
 & \left.\left.+\frac{1}{2}\left|\alpha_{s}\left(x\right)\right|^{2}q_{s}\left(x\right)\right]\mathrm{d}x+\mathcal{F}\left(q_s\right)\right\} \mathrm{ds}+\mathcal{G}\left(q_{T}\right).
\end{align*}
The Euler-Lagrangian equations can be found by taking
the variational derivatives, so that
\[
\begin{aligned}\frac{\delta\mathcal{E}}{\delta\varphi_{s}}= & \partial_{s}q_{s}+\nabla\cdot\left[\left(\mathcal{T}q_{s}+\alpha_{s}\right)q_{s}\right]-D\Delta q_{s}=0,\\
\frac{\delta\mathcal{E}}{\delta\alpha_{s}}= & \alpha_{s}q_{s}-\nabla\varphi_{s}q_{s}=0,\\
\frac{\delta\mathcal{E}}{\delta q_{s}}= & \frac{1}{2}\left|\alpha_{s}\right|^{2}-\nabla\varphi_{s}\cdot\left(\mathcal{T}q_{s}+\alpha_{s}\right)-\partial_{s}\varphi_{s}-D\Delta\varphi_{s}-\mathcal{T}^{*}\cdot\left(\nabla\varphi_{s}q_{s}\right)+\frac{\delta\mathcal{F}}{\delta q_{s}}=0,\\
\frac{\delta\mathcal{E}}{\delta q_{T}}= & \frac{\delta\mathcal{G}}{\delta q_{T}}+\varphi_{T}=0.
\end{aligned}
\]
Define the Legendre transform $\sup_{\alpha}\left\{ p\cdot\alpha- L\left(\alpha\right)\right\} =\sup_{\alpha}\left\{ p\cdot\alpha-\frac{1}{2}\left|\alpha\right|^{2}\right\} =\frac{1}{2}\left|p\right|^{2}$.
The second equation above implies that the optimal solution is solved
by $\alpha_{s}\left(x\right)=\nabla\varphi_{s}\left(x\right)$.
\end{proof}

\subsection{SDE formulations for the mean field game models}
From a different perspective, we can reformulate the above PDE models for controlling tracer density and fluid vorticity as SDEs representing particles in Lagrangian form. The SDE formulation can also help to propose effective computational strategies based on simulation of the stochastic samples.

First, the continuity equation in the MFG-1 model \eqref{eq:loss_tracer1} can be viewed as the density equation of the motion of the stochastic particles \eqref{eq:ctrl_sde}. Therefore, we can formulate the following stochastic optimal control problem (referred as \textbf{MFG-1$'$ model}) for corresponding stochastic control problem with the given flow solution $q_s\left(\mathbf{x}\right),t\leq s \leq T$
\begin{equation}\label{eq:loss_sde_tracer1}
\begin{aligned}
\mathcal{U}\left(\rho,t\right) & \coloneqq \inf_{\rho\left(\cdot\right),\alpha\left(\cdot\right)}\mathcal{J}\left(\rho\left(\cdot\right),\alpha\left(\cdot\right)\right)=\inf_{\rho\left(\cdot\right),\alpha\left(\cdot\right)} \mathbb{E}\left\{ G(X_T,q_T)  +\int_{t}^{T}  [L\left(\alpha_{s}\left(X_s\right )\right) +F\left(X_{s},q_{s} \right)] \mathrm{d}s \right\}, \\
\st & \,\,\mathrm{d}X_{s}=\left[\mathcal{T}q_{s}\left(X_s\right)+\alpha_{s}\left(X_s\right)\right]\mathrm{d}s+\sqrt{2D}\mathrm{d}W_{s},\quad t\leq s\leq T,\quad \mathrm{and}\quad X_{t} \sim \rho\left(x\right).
\end{aligned}
\end{equation}
The optimal solution of the MFG-1$'$ model can be still solved by the Euler-Lagrangian equations.
\begin{corollary}
With the flow vorticity field $q_s,s\in \left[t,  T\right]$ given,  the solution $\left(\rho_s,\phi_s\right)$ of the MFG-1 system \eqref{eq:mfg1} provides the optimal solution of the stochastic MFG-1$'$ model \eqref{eq:loss_sde_tracer1}.
\end{corollary}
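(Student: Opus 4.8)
The plan is to exploit the fact that MFG-1$'$ is nothing but the Lagrangian (pathwise) reformulation of the Eulerian problem MFG-1, so that the Corollary reduces to the Proposition together with a convexity (or verification) argument promoting the Euler--Lagrange critical point to a genuine minimizer. Concretely, if $X_s$ solves the SDE in \eqref{eq:loss_sde_tracer1} under a feedback $\alpha_s(\cdot)$, then its law $\rho_s\coloneqq\mathrm{Law}(X_s)$ solves the Fokker--Planck equation \eqref{eq:KP}, i.e.\ the constraint in \eqref{eq:loss_tracer1}; conversely any nonnegative solution of that continuity equation with drift $\mathcal{T}q_s+\alpha_s$ is realized as such a law by the superposition principle. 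Under this identification the two cost functionals coincide term by term, since $\bE\, g(X_s)=\int g(x)\rho_s(x)\,\mathrm{d}x$ applied to $g=L(\alpha_s(\cdot))$, $g=F(\cdot,q_s)$ and $g=G(\cdot,q_T)$ converts $\mathcal{J}$ in \eqref{eq:loss_sde_tracer1} into $\mathcal{J}$ in \eqref{eq:loss_tracer1}. Hence $\mathcal{U}(\rho,t)$ is the same for both problems, and it suffices to prove that the pair $(\rho_s,\nabla\phi_s)$ coming from the MFG-1 system \eqref{eq:mfg1} is a global minimizer of the Eulerian problem.

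First I would invoke the preceding Proposition: it already shows $(\rho_s,\alpha_s)=(\rho_s,\nabla\phi_s)$ is a critical point of the constrained functional, with $\phi_s$ the associated Lagrange multiplier. To upgrade criticality to optimality I would pass to the momentum variable $m_s\coloneqq\rho_s\alpha_s$; with $L(\alpha)=\tfrac12|\alpha|^2$ the control cost becomes $\int_t^T\!\!\int\tfrac{|m_s|^2}{2\rho_s}\,\mathrm{d}x\,\mathrm{d}s$, jointly convex in $(\rho_s,m_s)$, the constraint $\partial_s\rho_s+\nabla\cdot(\mathcal{T}q_s\rho_s+m_s)=D\Delta\rho_s$ with $\rho_t=\rho$ is affine in $(\rho_s,m_s)$ (crucially, $q_s$ is a fixed datum in MFG-1, not an unknown), and the residual running and terminal costs $\int F(x,q_s)\rho_s$, $\int G(x,q_T)\rho_T$ are linear in $\rho$. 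A convex objective over an affine feasible set has every stationary point as a global minimum, which delivers optimality of $(\rho_s,\nabla\phi_s)$, hence of $\alpha^*_s=\nabla\phi_s$ with optimal state law $\rho_s$ for MFG-1$'$.

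As an equivalent, more probabilistic route I would run a verification argument directly on the SDE: set $V_s\coloneqq-\phi_s$ and observe that \eqref{eq:hje1} is precisely the Hamilton--Jacobi--Bellman equation $-\partial_s V_s=\inf_\alpha\{(\mathcal{T}q_s+\alpha)\cdot\nabla V_s+D\Delta V_s+L(\alpha)\}+F(\cdot,q_s)$ with terminal value $V_T=G(\cdot,q_T)$, the infimum being attained at $\alpha=-\nabla V_s=\nabla\phi_s$. Applying It\^o's formula to $s\mapsto V_s(X_s)$ along an arbitrary admissible control, taking expectations to discard the martingale term, and using the pointwise HJB inequality yields $\mathcal{J}(\rho(\cdot),\alpha(\cdot))\ge\bE\, V_t(X_t)=-\int\phi_t(x)\rho(x)\,\mathrm{d}x$, with equality exactly when $\alpha_s=\nabla\phi_s$; the closed-loop dynamics under this feedback then have law $\rho_s$ by \eqref{eq:fpe1}, closing the loop.

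I expect the only genuine obstacle to be the step one must be careful about in either route: justifying the It\^o expansion and the vanishing of the stochastic-integral term (requiring enough regularity of $\phi$, e.g.\ $C^{1,2}$ with controlled growth, plus integrability of the admissible controls and of $X_s$), and—on the convex-duality side—ensuring $\rho_s>0$ throughout so that the feedback $\alpha_s=\nabla\phi_s$ is well defined and the change of variables $m_s=\rho_s\alpha_s$ is invertible. Positivity of $\rho_s$ follows from the strong maximum principle for \eqref{eq:fpe1} once $\rho>0$; the remaining regularity and integrability conditions I would simply record as standing hypotheses, as is standard for verification-type statements.
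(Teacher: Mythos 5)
Your proposal is correct, and it is in fact more complete than what the paper provides: the paper states this corollary without proof, treating it as immediate from the observation that the continuity equation in \eqref{eq:loss_tracer1} is the Kolmogorov forward equation of the SDE in \eqref{eq:loss_sde_tracer1}, so that $\mathbb{E}[g(X_s)]=\int g\,\rho_s$ identifies the two cost functionals and the MFG-1 system carries over verbatim. Your first paragraph is exactly that identification. Where you genuinely go beyond the paper is in recognizing that the preceding Proposition only establishes criticality (Euler--Lagrange) and then closing the gap in two standard ways: the Benamou--Brenier change of variables $m_s=\rho_s\alpha_s$, which makes the Eulerian problem convex over an affine constraint set (valid here precisely because $q_s$ is frozen data in MFG-1, so the constraint stays linear), and the It\^o/verification argument on $V_s=-\phi_s$ using \eqref{eq:hje1} as an HJB equation. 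The latter is essentially the probabilistic counterpart of the computation the paper performs later in Proposition~\ref{prop:hje1_funcnal}, where \eqref{eq:hje1} is multiplied by $\rho_s$ and integrated to obtain $\mathcal{U}=-\int\rho\,\phi_t$; your pathwise version yields the same identity as a lower bound for \emph{every} admissible control, which is what actually certifies optimality. Your caveats (regularity of $\phi$ for It\^o, vanishing of the martingale term, strict positivity of $\rho_s$ so that $\alpha_s=\nabla\phi_s$ and the momentum change of variables are well defined) are the right ones to record as standing hypotheses, and the superposition step relating nonnegative solutions of \eqref{eq:KP} to laws of the SDE is standard for nondegenerate diffusion. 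In short: same starting identification as the paper, plus a verification layer the paper omits.
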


Then, the control forcing \eqref{eq:forcing} can be viewed as
the additional drifting effect $\alpha_{s}$ that is externally
exerted on the local particles, which in an optimal way is given by $\alpha_{s}=\nabla \varphi_s$. 
We can introduce the McKean-Vlasov equation
\[
\mathrm{d}X_{s}=\left[u_{s}\left(X_{s};\rho_{s}\right)+\alpha_{s}\left(X_{s};\rho_{s}\right)\right]\mathrm{d}s+\sqrt{2D}\mathrm{d}W_{s}.
\]
The tracer density as the law of the above stochastic particle $X_s\sim\rho_s$ is solved by the continuity equation  in the form
\[
\partial_{s}\rho_{s}+\nabla\cdot\left[\left(u_{s}+\alpha_{s}\right)\rho_{s}\right]=D\Delta \rho_{s}.
\]
Let $q_s$ be a solution of the controlled flow vorticity equation \eqref{eq:continuity}. With the  same form of $\alpha_{s}$ and $u_{s}=\mathcal{T}q_{s}$,  we have $\rho_s=q_s$ due to the uniqueness for linear equations. 

Similarly, according to the MFG-2 model \eqref{eq:loss_pde},
we give the following stochastic optimal control problem (referred as \textbf{MFG-2$'$ model})
\begin{equation}\label{eq:loss_pde1}
\begin{aligned}
\mathcal{V}\left(q,t\right) & \coloneqq \inf_{q\left(\cdot\right),\alpha\left(\cdot\right)}\mathcal{I}\left(q\left(\cdot\right),\alpha\left(\cdot\right)\right)=\inf_{q\left(\cdot\right),\alpha\left(\cdot\right)} \mathbb{E}\left\{ \tilde{G}(X_T,q_T)  +\int_{t}^{T}  [L\left(\alpha_{s}\left(X_s\right )\right) +\tilde{F}\left(X_{s},q_{s} \right)] \mathrm{d}s \right\}, \\
\st & \,\,\mathrm{d}X_{s}=\left[\mathcal{T}q_{s}\left(X_{s}\right)+\alpha_{s}\left(X_{s}\right)\right]\mathrm{d}s+\sqrt{2D}\mathrm{d}W_{s},\quad t\leq s\leq T,\quad \mathrm{and}\quad X_{s} \sim q\left(x\right).
\end{aligned}
\end{equation}
Above, $q_s$ as the law of the random particles $X_s$ is playing the equivalent role of the tracer density in the nonlinear field, and the initial sample is drawn from the initial flow state $X_{t} \sim q$. Still, we assume the MFG-2 model \eqref{eq:loss_pde} is a potential game with the cost functionals $\mathcal{F},\mathcal{G}$ satisfying 
\[
\mathcal{F}\left(q\right) = \int \tilde{F}\left(x,q\right)q(x)\mathrm{d}x, \quad \mathcal{G}\left(q\right) = \int \tilde{G}\left(x,q\right)q(x)\mathrm{d}x.
\]
In a similar way, the solution of the MFG-2$'$ model is given by the same Euler-Lagrangian equations in \eqref{eq:mfg2}. Notice that the new costs $\tilde{F},\tilde{G}$ may not necessarily be the same as the $F,G$ in \eqref{eq:loss_sde_tracer1}.
\begin{corollary}
With the same initial condition $q=\rho$,  the solution $\left(\rho_s,\varphi_s\right)$ of the MFG equations \eqref{eq:mfg2} provides the optimal solution of the stochastic mean field game model \eqref{eq:loss_pde1}.
\end{corollary}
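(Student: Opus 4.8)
The plan is to reduce the stochastic problem \eqref{eq:loss_pde1} to the deterministic potential control problem \eqref{eq:loss_pde}, for which the preceding Proposition on the MFG-2 system already identifies the optimizer through \eqref{eq:mfg2}; the argument then runs in parallel to the previous corollary for the MFG-1$'$ model. First I would rewrite the cost $\mathcal{I}$ in \eqref{eq:loss_pde1} using that $q_s$ is by construction the law of $X_s$: the potential relations $\mathcal{F}(q)=\int\tilde F(x,q)q(x)\,\mathrm{d}x$ and $\mathcal{G}(q)=\int\tilde G(x,q)q(x)\,\mathrm{d}x$ give $\mathbb{E}\big[\tilde F(X_s,q_s)\big]=\mathcal{F}(q_s)$, $\mathbb{E}\big[\tilde G(X_T,q_T)\big]=\mathcal{G}(q_T)$, and $\mathbb{E}\big[L(\alpha_s(X_s))\big]=\int L(\alpha_s(x))q_s(x)\,\mathrm{d}x$, so that $\mathcal{I}(q(\cdot),\alpha(\cdot))$ coincides term by term with the objective in \eqref{eq:loss_pde}, with the initial condition $q_t=q$ matching the initial law $X_t\sim q$.

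Next I would match the two constraints. The forward Kolmogorov (Fokker--Planck) equation associated with the McKean--Vlasov SDE $\mathrm{d}X_s=\big[\mathcal{T}q_s(X_s)+\alpha_s(X_s)\big]\mathrm{d}s+\sqrt{2D}\,\mathrm{d}W_s$ with $X_t\sim q$ is precisely the nonlinear continuity equation $\partial_s q_s+\nabla\cdot[(\mathcal{T}q_s+\alpha_s)q_s]=D\Delta q_s$, $q_t=q$, appearing in \eqref{eq:loss_pde} (equivalently \eqref{eq:continuity}, where $\nabla\cdot(\mathcal{T}q_s)=0$ is used only to pass to that form); conversely, every admissible pair $(q(\cdot),\alpha(\cdot))$ for \eqref{eq:loss_pde} with $q_s\geq 0$ generates, after normalization, a McKean--Vlasov flow realizing the same cost. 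Hence \eqref{eq:loss_pde1} and \eqref{eq:loss_pde} share the same admissible pairs and the same value $\mathcal{V}(q,t)$, and an optimizer of one is an optimizer of the other. Applying the Proposition on the MFG-2 system, the optimizer is then characterized by \eqref{eq:mfg2}, with the optimal control $\alpha_s=\nabla\varphi_s$ recovered from the Legendre transform of $L(\alpha)=\frac{1}{2}|\alpha|^2$, which is the claim. (Alternatively one could argue by a verification theorem, building a candidate for $\mathcal{V}$ out of $\varphi_s$ and checking it solves the associated functional/master Hamilton--Jacobi equation, but that route is heavier and unnecessary at the level of rigor of the earlier propositions.)

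The main obstacle is making this reduction fully rigorous: well-posedness of the McKean--Vlasov SDE and of its Fokker--Planck equation under the nonlocal, nonlinear drift $\mathcal{T}q_s$; the positivity and normalization of $q_s$ needed to read it as a probability density (the text itself flags that $q_s$ is only ``a proper measure'' up to a normalization constant); and the fact that the Euler--Lagrange system \eqref{eq:mfg2} is a priori only a \emph{necessary} condition, since the state constraint is nonlinear in $q$ and global optimality is not automatic. I would handle the last point either by exploiting convexity of $L$ in $\alpha$ along a fixed state trajectory together with the potential structure of $\mathcal{F},\mathcal{G}$, or---as the propositions implicitly do---by stating the corollary at the formal level of critical points of the cost functional.
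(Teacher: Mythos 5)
Your proposal is correct and follows essentially the same route as the paper: the paper's (very brief) justification is precisely the identification of $q_s$ with the law of the McKean--Vlasov particles via the forward Kolmogorov equation, together with the potential-game relations $\mathcal{F}(q)=\int\tilde F(x,q)q\,\mathrm{d}x$, $\mathcal{G}(q)=\int\tilde G(x,q)q\,\mathrm{d}x$ turning the expected cost of \eqref{eq:loss_pde1} into the functional cost of \eqref{eq:loss_pde}, after which the MFG-2 proposition is invoked. Your additional remarks on well-posedness, normalization of $q_s$, and the Euler--Lagrange system being only a necessary condition are reasonable caveats that the paper leaves implicit at its formal level of rigor.
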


\subsubsection*{Mean field game with finite number of players}
 The empirical distribution of a group of
$N$ particles is supposed to recover the global vorticity field
\begin{equation}
q_{s}^{N}\left(x\right)=\frac{1}{N}\sum_{i=1}^{N}\delta_{X_{s}^{i}}\left(x\right) \cong q_{s}\left(x\right),\label{eq:emp_pdf}
\end{equation}
as $N\rightarrow\infty$. If the control velocity field $\vec{\alpha}$  for SDE \eqref{eq:ctrl_sde} and for   \eqref{eq:continuity} coincide,  the continuity PDE \eqref{eq:continuity}
becomes the   Kolmogorov forward  equation  for the SDE \eqref{eq:ctrl_sde}
for particles, thus $q_{s}$ plays the equivalent role of a probability
density measure that uniformly describes the statistics of all the
particle trajectories $\left\{ X_{s}^{i}\right\} $. Notice
that the control $\alpha_{s}$ depends on the entire density
$q_{s}$, thus finding the optimal control finally leads to a MFG problem.

We can find the associated stochastic differential game with mean-field interaction based on the SDE model
\eqref{eq:ctrl_sde} using the proposed cost functionals in \eqref{eq:loss_pde}
\begin{equation}\label{eq:loss_sde}
\begin{aligned}
\mathcal{V}^{N}\left(q,t\right) & \coloneqq \inf_{q^{N}\left(\cdot\right),\alpha^{N}\left(\cdot\right)}\mathcal{I}_{i}\left(q^{N}\left(\cdot\right),\alpha^{N}\left(\cdot\right)\right)= \inf_{q^{N}\left(\cdot\right),\alpha^{N}\left(\cdot\right)}\mathbb{E}\left\{ \tilde{G}\left(X_T^{i},q_{T}^{N}\right)+\int_{t}^{T}\left[L\left(\alpha_{s}^{N}\left(X_{s}^{i}\right)\right)+\tilde{F}\left(X_s^{i},q_s^{N}\right)\right]\mathrm{d}s\right\} ,\\
\st & \,\,\mathrm{d}X_{s}^{i}  =\left[\mathcal{T}q_{s}^{N}\left(X_{s}^{i}\right)+\alpha_{s}^{N}\left(X_{s}^{i}\right)\right]\mathrm{d}s+\sqrt{2D}\mathrm{d}W_{s}^{i},\quad t\leq s\leq T,\;X_{t}^{i}\sim q,\quad i=1,\cdots,N.
\end{aligned}
\end{equation}
Above, $q_{s}^{N}$ is the empirical distribution \eqref{eq:emp_pdf} of the group of finite players $\lbrace X_s^{i}\rbrace_{i=1}^{N}$, and $\alpha_{s}^{N}$ is implicitly dependent on the empirical distribution of the $N$ samples.
The advantage of using this finite particle model \eqref{eq:loss_sde} is that we are able to find the optimal control for the complex flow field by controlling the finite number of Lagrangian tracers. This enables an efficient Monte-Carlo type approach to capture the continuous fluid solution. We refer to \cite{lasry2007mean} for the mean-field limit from the above stochastic differential game with mean-field interaction to the mean-field game system in the central planer form.

\section{Functional Hamilton-Jacobi equations for the value functions}\label{sec:hje}

In this section, we first demonstrate that the MFG models in \eqref{eq:mfg1} and \eqref{eq:mfg2} can be recast in 
Hamiltonian forms. Thus the value functions $\mathcal{U}\left(\rho,t\right)$
and $\mathcal{V}\left(\rho,t\right)$ satisfy the corresponding functional Hamilton-Jacobi
equations. Under the functional HJE formulations, the coupled MFG-2 model can be shown to be related to a modified MFG-1 model with given optimal solutions.

\subsection{Functional HJE for the MFG-1 model}

First, the MFG-1 model \eqref{eq:loss_tracer1} for controlling tracer
density transport has the Hamiltonian functional
\begin{equation}
\mathcal{H}_{1}\left(\rho,\phi;q\right)=\int\left[\frac{1}{2}\left|\nabla\phi\right|^{2}+\nabla\phi\cdot\mathcal{T}q\left(x\right)+D\Delta\phi-F\left(x,q\right)\right]\rho\left(x\right)\mathrm{d}x,\label{eq:hamil_mfg1}
\end{equation}
with $q\left(x\right)$ the solution of the prescribed flow vorticity
field. It can be found that the Euler-Lagrangian equations \eqref{eq:mfg1} follow the Hamiltonian dynamics
for $s\in\left[t,T\right]$
\begin{equation}
\begin{aligned}\partial_{s}\rho_{s} & =\frac{\delta\mathcal{H}_{1}}{\delta\phi}\left(\rho_{s},\phi_{s};q_{s}\right),\\
\partial_{s}\phi_{s} & =-\frac{\delta\mathcal{H}_{1}}{\delta\rho}\left(\rho_{s},\phi_{s};q_{s}\right),
\end{aligned}
\label{eq:hamil1}
\end{equation}
with $\rho_{t}=\rho$, and $q_{s}$ given by the background advection flow solution.
In particular, since $q\left(x\right)$ is given we can define the local Hamiltonian function $H_1$ according to
the separable Hamiltonian functional \eqref{eq:hamil_mfg1} as
\begin{equation}
H_1\left(x,p; q\right)=\frac{1}{2}\left|p\right|^{2}+\mathcal{T}q\left(x\right)\cdot p,\label{eq:hamil1_local}
\end{equation}
and the corresponding Lagrangian becomes
\[
L\left(x,b; q\right)=\sup_{p}\lbrace b\cdot p-H_1\left(x,p;q\right)\rbrace=\frac{1}{2}\left|b-\mathcal{T}q\left(x\right)\right|^{2},
\]
and with the convexity the supremum is reached at $p^{*}=b-\mathcal{T}q\left(x\right)$.  Thus, the Lagrangian functional can be redefined as $L\left(x,\alpha; q\right) \coloneqq L\left(x,b\left(x,\alpha\right); q\right)=\frac{1}{2}\left|\alpha\right|^{2}$.
The Hamiltonian functional \eqref{eq:hamil_mfg1} can be represented by the local Hamiltonian \eqref{eq:hamil1_local} as
\[
\mathcal{H}_{1}\left(\rho,\phi;q\right)=\int \left[H_1\left(x,\nabla\phi;q\right)+D\Delta\phi-F\left(x,q\right)\right]\rho\left(x\right)\mathrm{d}x.
\]
And we can introduce
the optimal `total velocity' as $b_{s}\left(x\right)=\partial_{p}H_1\left(x,\nabla\phi_{s};q_{s}\right)=\mathcal{T}q_{s}\left(x\right)+\nabla\phi_{s}$.

Since the MFG-1 system  \eqref{eq:mfg1} is decoupled, we can directly compute the value function and it's relation with the HJE solution $\phi_s$, $t\leq s\leq T$. Let $\mathcal{U}\left(\rho,t;q\right)$ be the optimal value function
from the MFG-1 model \eqref{eq:loss_tracer1} with the given flow solution $q_{t}=q$. The following proposition gives the functional HJE for the MFG-1 model.
\begin{proposition}\label{prop:hje1_funcnal}
Given a fluid vorticity field $\lbrace q_s\rbrace_{ t\leq s\leq T},$
let $\mathcal{U}\left(\rho,t;q(\cdot)\right)$ be  the   optimal value function of the MFG-1 model \eqref{eq:loss_tracer1}, whose minimizer is the classical solution $(\rho_s, \phi_s)$ to  the MFG system \eqref{eq:mfg1}. Then we have
$\phi_t = - \frac{\delta\mathcal{U}}{\delta\rho}\left(\rho_{t},x,t;q\left(\cdot\right)\right)$ for any $t\leq T$ and the value function satisfies the functional HJE
\begin{equation}
\begin{aligned}
\partial_{t}\mathcal{U}\left(\rho,t;q(\cdot)\right) &-\mathcal{H}_{1}\left(\rho,-\frac{\delta\mathcal{U}}{\delta\rho}\left(\rho,x,t;q(\cdot)\right);q_t\right) =0, \quad \forall t\leq T,\\
& \mathcal{U}\left(\rho,T;q(\cdot)\right) =\int G(x,q_T) \rho\left(x\right)\mathrm{d}x.\label{eq:hje_funal1}
\end{aligned}
\end{equation}
\end{proposition}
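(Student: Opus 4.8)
The plan is to exploit that the MFG-1 system \eqref{eq:mfg1} is \emph{decoupled}: the backward Hamilton--Jacobi equation \eqref{eq:hje1} for $\phi_s$ does not involve the density, so the path $\{\phi_s\}_{t\le s\le T}$ is fixed by the prescribed vorticity $\{q_s\}$ and the terminal datum $-G(\cdot,q_T)$ alone, and in particular is independent of the initial density $\rho$. From this I would first derive an explicit, linear-in-$\rho$ representation of $\mathcal{U}$, and then obtain both assertions by plain differentiation rather than through a dynamic-programming argument.

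First, for an arbitrary initial density $\rho$, let $(\rho_s,\phi_s)$ be the classical solution of \eqref{eq:mfg1} with $\rho_t=\rho$, which by hypothesis realizes the infimum in \eqref{eq:loss_tracer1} with $\alpha_s=\nabla\phi_s$. I would compute $\frac{\mathrm{d}}{\mathrm{d}s}\int_{\mathbb{T}^d}\phi_s\rho_s\,\mathrm{d}x$, substituting $\partial_s\rho_s$ from \eqref{eq:fpe1} and $\partial_s\phi_s$ from \eqref{eq:hje1} and integrating by parts on $\mathbb{T}^d$; the transport terms $\nabla\phi_s\cdot\mathcal{T}q_s$ and the diffusion terms $D\Delta\phi_s$ cancel in pairs, leaving $\frac{\mathrm{d}}{\mathrm{d}s}\int\phi_s\rho_s\,\mathrm{d}x=\int\big[\tfrac12|\nabla\phi_s|^2+F(x,q_s)\big]\rho_s\,\mathrm{d}x$. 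Integrating over $[t,T]$, using $\phi_T=-G(\cdot,q_T)$ and $\rho_t=\rho$, and recognizing the right-hand side as the running part of the cost with $L(\alpha)=\tfrac12|\alpha|^2$, I arrive at
\[
\mathcal{U}(\rho,t;q(\cdot))=\mathcal{J}(\rho(\cdot),\alpha(\cdot))=-\int_{\mathbb{T}^d}\phi_t(x)\,\rho(x)\,\mathrm{d}x .
\]
Since $\phi_t$ is the same function for every choice of the initial density, this representation is linear in $\rho$, so $\frac{\delta\mathcal{U}}{\delta\rho}(\rho_t,x,t;q(\cdot))=-\phi_t(x)$, which is the first claim (the additive-constant ambiguity of the functional derivative on the mass simplex is harmless and is removed by normalizing $\phi$).

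Next, differentiating this representation in the initial time $t$ gives $\partial_t\mathcal{U}(\rho,t;q(\cdot))=-\int_{\mathbb{T}^d}\big(\partial_s\phi_s\big)\big|_{s=t}\,\rho\,\mathrm{d}x$. Inserting $\partial_s\phi_s=F(x,q_s)-\tfrac12|\nabla\phi_s|^2-\nabla\phi_s\cdot\mathcal{T}q_s-D\Delta\phi_s$ from \eqref{eq:hje1} at $s=t$ and matching the resulting integrand against the definition \eqref{eq:hamil_mfg1} of $\mathcal{H}_1$, one reads off $\partial_t\mathcal{U}(\rho,t;q(\cdot))=\mathcal{H}_1(\rho,\phi_t;q_t)=\mathcal{H}_1\big(\rho,-\tfrac{\delta\mathcal{U}}{\delta\rho};q_t\big)$, which is exactly \eqref{eq:hje_funal1}; the terminal condition $\mathcal{U}(\rho,T;q(\cdot))=\int G(x,q_T)\rho\,\mathrm{d}x$ is immediate from $\phi_T=-G(\cdot,q_T)$ and the formula just derived.

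The algebra is short; the genuine work is the regularity bookkeeping — that $(\rho_s,\phi_s)$ are classical solutions (granted in the statement) so that the integrations by parts on $\mathbb{T}^d$ and the differentiation of $\int\phi_s\rho_s\,\mathrm{d}x$ in $s$ are legitimate, and that $\mathcal{U}$ is $C^1$ in $t$, which here follows from the explicit formula and the time regularity of $\phi$. As a cross-check one may also take the standard route: start from the dynamic programming principle $\mathcal{U}(\rho,t)=\inf_\alpha\big[\int_t^{t+h}\!\int(L(\alpha_s)+F)\rho_s\,\mathrm{d}x\,\mathrm{d}s+\mathcal{U}(\rho_{t+h},t+h)\big]$, Taylor-expand in $h$, substitute the continuity equation for $\partial_s\rho_s$, integrate by parts, and minimize the pointwise integrand over $\alpha$ via the Legendre transform of $L$; this recovers the same HJE but requires an extra verification step to identify the optimal feedback with $\nabla\phi_t$, so I would present the direct argument above.
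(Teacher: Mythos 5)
Your argument is correct and coincides with the paper's proof: both establish the linear representation $\mathcal{U}(\rho,t;q(\cdot))=-\int\phi_t\,\rho\,\mathrm{d}x$ by pairing the backward equation \eqref{eq:hje1} with the forward equation \eqref{eq:fpe1} (your $\frac{\mathrm{d}}{\mathrm{d}s}\int\phi_s\rho_s\,\mathrm{d}x$ computation is exactly the paper's "multiply \eqref{eq:hje1} by $\rho_s$ and integrate in time"), read off $\phi_t=-\frac{\delta\mathcal{U}}{\delta\rho}$ from linearity in $\rho$, and then differentiate in $t$ and substitute \eqref{eq:hje1} to identify $\partial_t\mathcal{U}$ with $\mathcal{H}_1\left(\rho,-\frac{\delta\mathcal{U}}{\delta\rho};q_t\right)$. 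No gaps; the dynamic-programming cross-check you mention is not needed.
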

In the above, given a fluid vorticity field $q\left(\cdot\right)\coloneqq \lbrace q_s\rbrace_{ t\leq s\leq T}$, we denote  $\mathcal{U}\left(\rho,t;q(\cdot)\right)\coloneqq\mathcal{U}\left(\rho,t;\lbrace q_s\rbrace_{t\leq s\leq T}\right)$ to indicate that the dependence of $\mathcal{U}$ on $q\left(\cdot\right)$ is in terms of  the whole given absolute continuous curve, while the dependence of $\mathcal{U}$ on $\rho$ is only in terms of the initial state $\rho_t=\rho.$
\begin{proof}
Given a fluid vorticity field $\lbrace q_s\rbrace_{s\leq T}$, recall  \eqref{eq:mfg1}.
Multiplying \eqref{eq:hje1} by $\rho_s$ and integrating time from $t$ to $T$, we have
\begin{equation}
\begin{aligned}
&\int_t^T \int\rho_s \left(\partial_{s}\phi_{s}+\frac{1}{2}\left|\nabla\phi_{s}\right|^{2}+\nabla\phi_{s}\cdot\mathcal{T}q_{s}+ D\Delta\phi_{s} - F(x,q_s) \right) \ud x\ud s\\
=&\int_t^T \int\left( \pt_s (\rho_s \phi_{s}) + \big[-\pt_s \rho_s   - \nabla \cdot (\rho_s \mathcal{T}q_{s}) + D\Delta \rho_s \big] \phi_{s} +\frac{1}{2}\left|\nabla\phi_{s}\right|^{2} \rho_s   -  F(x,q_s) \rho_s \right) \ud x\ud s\\
=&  -\int\rho_T G(x,q_T)\ud x- \int\rho_t \phi_{t}\ud x + \int_t^T \int\left(-\frac{1}{2}\left|\nabla\phi_{s}\right|^{2} \rho_s   -  F(x,q_s) \rho_s \right) \ud x\ud s,
\end{aligned}
\end{equation}
where in the last equality we used \eqref{eq:fpe1} for the dynamics of $\rho_s$ and the terminal condition for $\phi_s$.

Since $\rho_s$ and $\phi_s$ for $t\leq s\leq T$ are optimal curves solving the Euler-Lagrangian \eqref{eq:mfg1} with $\rho_t = \rho$, the corresponding value function is given by
\begin{equation}\label{eq:value1}
\mathcal{U}(\rho, t;q\left(\cdot\right)) = \int \left[\rho_T G(x,q_T) + \int_t^T \left(\frac{1}{2}\left|\nabla\phi_{s}\right|^{2} \rho_s   +  F(x,q_s) \rho_s \right) \ud s \right] \ud x = - \int \rho \phi_{t} \ud x
\end{equation}
for any $t\leq T$. We can directly verify that $\phi_t = - \frac{\delta\mathcal{U}}{\delta\rho}\left(\rho_{t},x,t;q\right)$ for any $t\leq T$ from the above identity.
Furthermore,  this leads to the following dynamical equation for the value function $\mathcal{U}$ by taking time derivation on both sides of \eqref{eq:value1} and applying the equation \eqref{eq:hje1}
 \[
 \partial_t \mathcal{U}=-\int\rho\partial_t\phi_t\ud x=\int\left[H_1\left(x,\nabla\phi_t;q_t\right)+D\Delta\phi_t-F\left(x,q_t\right)\right]\rho\left(x\right)\mathrm{d}x.
 \]
From the above calculations and the definition of the funtional Hamiltonian $\mathcal{H}_1$ \eqref{eq:hamil_mfg1}, it confirms that the value function satisfies the functional HJE  \eqref{eq:hje_funal1}.
\end{proof}

\subsection{Functional HJE for the MFG-2 model}

Similarly, the Hamiltonian functional for the MFG-2 model \eqref{eq:loss_pde}
can be found as
\begin{equation}
\mathcal{H}_{2}\left(q,\varphi\right)=\int \left[\left(\frac{1}{2}\left|\nabla\varphi\right|^{2}+\nabla\varphi\cdot\mathcal{T}q\right)q\left(x\right)-D\nabla\varphi\cdot\nabla q\right]\mathrm{d}x-\mathcal{F}\left(q\right).\label{eq:hamil_mfg2}
\end{equation}
The following Hamiltonian equations are still valid for the MFG-2 model \eqref{eq:mfg2}
\begin{equation}
\begin{aligned}\partial_{s}q_{s} & =\frac{\delta\mathcal{H}_{2}}{\delta\varphi}\left(q_{s},\varphi_{s}\right),\\
\partial_{s}\varphi_{s} & =-\frac{\delta\mathcal{H}_{2}}{\delta q}\left(q_{s},\varphi_{s}\right).\label{eq:hamil2}
\end{aligned}
\end{equation}
We can also use the local Hamiltonian function to express the functional Hamiltonian
\begin{equation}
\begin{aligned}
H_2\left(x,p,q\right) &=\frac{1}{2}\left|p\right|^{2}+\mathcal{T}q\cdot p,\\
\mathcal{H}_{2}\left(q,\varphi\right) &=\int \left[H_2\left(x,\nabla\varphi(x), q\right)q\left(x\right)-D\nabla\varphi\cdot\nabla q\right]\mathrm{d}x-\mathcal{F}\left(q\right).\label{eq:hamil_func2}
\end{aligned}
\end{equation}
Above, we have $H_2\left(x,p,q\right)=\sup_{\alpha}\left\{ b\left(q,\alpha\right)\cdot p-L\left(\alpha\right)\right\} =\sup_{\alpha}\{ \left(\mathcal{T}q+\alpha\right)\cdot p-\frac{1}{2}\left|\alpha\right|^{2}\} $ with the optimal $\alpha^*=p$.
The optimal `total velocity' is defined in the same way as $b_{s}\left(x\right)=\partial_{p}H_2\left(q_s\left(x\right),\nabla\varphi_s\left(x\right)\right)=\mathcal{T}q_s\left(x\right)+\nabla\varphi_s\left(x\right)$. Notice that compared with \eqref{eq:hamil1} the Hamiltonian functional \eqref{eq:hamil_func2} becomes  non-separable, i.e., $H_2(x,p,q)$ is not in the form of $f_1(x,p)+f_2(x,q)$ \cite{gangbo2022mean}. This non-separable Hamiltonian comes from the nature of the original nonlinear fluid drift.

Therefore, we have the following proposition describing the master
equation of the MFG-2 model.
 
\begin{proposition}\label{prop:hje2_funcnal}
Assume the MFG-2 model \eqref{eq:loss_pde} achieves an optimal solution $(\rho_s,\phi_s)_{t\leq s\leq T}$, which solves MFG-2 system \eqref{eq:mfg2}.  Then the optimal value functions $\mathcal{V}\left(q,t\right)$ in the MFG-2 model \eqref{eq:loss_pde}  
is the solution to the following functional HJE 
\begin{equation}
\partial_{t}\mathcal{V}\left(q,t\right)-\mathcal{H}_2\left(q,-\frac{\delta\mathcal{V}}{\delta q}\right)=0, \,\,\, \forall t\leq T; \quad\mathcal{V}\left(q,T\right)=\mathcal{G}\left(q\right).\label{eq:master-mfg2}
\end{equation}
In detail, for the MFG-2 model we have
\begin{align*}
\partial_{t}\mathcal{V}\left(q,t\right)&-\frac{1}{2}\int\left|\nabla\frac{\delta\mathcal{V}}{\delta q}\left(q,x,t\right)\right|^{2}q\left(x\right)\mathrm{d}x+\int\left(\nabla\frac{\delta\mathcal{V}}{\delta q}\left(q,x,t\right)\cdot\mathcal{T}q\left(x\right)\right)q\left(x\right)\mathrm{d}x\\
& -D\int\nabla\frac{\delta\mathcal{V}}{\delta q}\left(q,x,t\right)\cdot\nabla q\left(x\right)\mathrm{d}x+\mathcal{F}\left(q\right)  =0,\quad t\leq T.
\end{align*}
In addition, the control solution $\partial_{s}\varphi_{s} =-\frac{\delta\mathcal{H}_{2}}{\delta q}\left(q_{s},\varphi_{s}\right)$ satisfies $\varphi_{s}\left(x\right)=-\frac{\delta\mathcal{V}}{\delta q}\left(q_{s},x,s\right)$.
\end{proposition}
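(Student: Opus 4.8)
The plan is to mirror the argument behind Proposition~\ref{prop:hje1_funcnal}, but with one essential modification: the MFG-2 system \eqref{eq:mfg2} is genuinely coupled and the functional Hamiltonian \eqref{eq:hamil_func2} is non-separable, so $\mathcal{V}(q,t)$ is no longer affine in $q$ and the shortcut $\mathcal{U}=-\int\rho\phi_t\,\mathrm{d}x$ used there is unavailable. I would instead combine the dynamic programming principle with an envelope identity for the costate $\varphi_s$. As a first step, by dynamic programming the restriction of the optimal pair $(q_s,\varphi_s)$ solving \eqref{eq:mfg2} to any subinterval $[s,T]$ is optimal for the MFG-2 problem started at $(q_s,s)$, hence
\[
\mathcal{V}(q_s,s)=\mathcal{G}(q_T)+\int_s^T\!\Big(\tfrac12\!\int|\nabla\varphi_\tau|^2 q_\tau\,\mathrm{d}x+\mathcal{F}(q_\tau)\Big)\mathrm{d}\tau ,
\]
so that $\tfrac{\mathrm{d}}{\mathrm{d}s}\mathcal{V}(q_s,s)=-\tfrac12\!\int|\nabla\varphi_s|^2q_s\,\mathrm{d}x-\mathcal{F}(q_s)$; on the other hand the chain rule gives $\tfrac{\mathrm{d}}{\mathrm{d}s}\mathcal{V}(q_s,s)=\partial_s\mathcal{V}(q_s,s)+\int\tfrac{\delta\mathcal{V}}{\delta q}(q_s,x,s)\,\partial_s q_s\,\mathrm{d}x$. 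Equating the two and eliminating $\tfrac{\delta\mathcal{V}}{\delta q}$ will produce the HJE once the costate is identified.

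The crux is that identification, $\varphi_s=-\tfrac{\delta\mathcal{V}}{\delta q}(q_s,\cdot,s)$ along the optimal curve. To get it I would freeze the optimal control $\alpha_s=\nabla\varphi_s$, perturb the initial datum $q_t\mapsto q_t+\eps\eta$, and let $\eta_s:=\partial_\eps q_s^\eps\big|_{\eps=0}$ solve the linearized forward equation $\partial_s\eta_s+\nabla\cdot\big((\mathcal{T}q_s+\nabla\varphi_s)\eta_s\big)+\nabla\cdot\big((\mathcal{T}\eta_s)q_s\big)=D\Delta\eta_s$, $\eta_t=\eta$. By the envelope theorem — first-order optimality in $\alpha$ (cf.\ $\delta\mathcal{E}/\delta\alpha_s=0$ in the derivation of \eqref{eq:mfg2}) annihilates the indirect term — the first variation of $\mathcal{V}$ at $q$ equals that of the cost at the frozen control, namely $\int G(x,q_T)\eta_T\,\mathrm{d}x+\int_t^T\!\int\big(\tfrac12|\nabla\varphi_s|^2+F(x,q_s)\big)\eta_s\,\mathrm{d}x\,\mathrm{d}s$, where the potential-game relations $F=\delta\mathcal{F}/\delta q$, $G=\delta\mathcal{G}/\delta q$ are used. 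I would then pair $\varphi_s$ against $\eta_s$ and insert \eqref{eq:hje2} together with the linearized equation: the Laplacian terms cancel, and — this is exactly where non-separability enters — the adjoint term $\mathcal{T}^{*}\!\cdot(\nabla\varphi_s q_s)$ of \eqref{eq:hje2} exactly cancels the drift-perturbation contribution $\int(\mathcal{T}\eta_s)\cdot(q_s\nabla\varphi_s)\,\mathrm{d}x=\int\big[\mathcal{T}^{*}\!\cdot(q_s\nabla\varphi_s)\big]\eta_s\,\mathrm{d}x$, leaving $\tfrac{\mathrm{d}}{\mathrm{d}s}\!\int\varphi_s\eta_s\,\mathrm{d}x=\int\big(\tfrac12|\nabla\varphi_s|^2+F(x,q_s)\big)\eta_s\,\mathrm{d}x$. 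Integrating from $t$ to $T$ and using $\varphi_T=-G(x,q_T)$ collapses the first variation above to $-\int\varphi_t\eta\,\mathrm{d}x$, which is precisely $\tfrac{\delta\mathcal{V}}{\delta q}(q,\cdot,t)=-\varphi_t$, and in particular the control identity $\varphi_s=-\tfrac{\delta\mathcal{V}}{\delta q}(q_s,\cdot,s)$.

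Finally I would substitute $\tfrac{\delta\mathcal{V}}{\delta q}(q_s,\cdot,s)=-\varphi_s$ back into the chain-rule expression from the first step, replace $\partial_s q_s$ by the forward equation \eqref{eq:fpe2}, and integrate by parts ($D\!\int\Delta\varphi_s q_s\,\mathrm{d}x=-D\!\int\nabla\varphi_s\cdot\nabla q_s\,\mathrm{d}x$ and $\int\varphi_s\nabla\cdot\big((\mathcal{T}q_s+\nabla\varphi_s)q_s\big)\,\mathrm{d}x=-\int\nabla\varphi_s\cdot(\mathcal{T}q_s+\nabla\varphi_s)q_s\,\mathrm{d}x$); collecting the surviving terms reproduces precisely the functional Hamiltonian \eqref{eq:hamil_mfg2} and gives $\partial_s\mathcal{V}(q_s,s)=\mathcal{H}_2\big(q_s,-\tfrac{\delta\mathcal{V}}{\delta q}(q_s,\cdot,s)\big)$. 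Since $(q,t)$ is an arbitrary state-time from which an optimal trajectory emanates, this is the asserted functional HJE $\partial_t\mathcal{V}-\mathcal{H}_2(q,-\delta\mathcal{V}/\delta q)=0$ in \eqref{eq:master-mfg2}; its expanded form follows by writing out $\mathcal{H}_2$ with $p=-\delta\mathcal{V}/\delta q$, and the terminal condition $\mathcal{V}(q,T)=\mathcal{G}(q)$ is immediate from \eqref{eq:loss_pde}.

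The step I expect to be the main obstacle is making the second paragraph rigorous: differentiability of the value under the infimum (well-posedness and differentiable dependence of the optimal control on the initial state $q$), solvability and regularity of the linearized flow $\eta_s$, and the integrations by parts for the nonlocal operator $\mathcal{T}$ via $\int(\mathcal{T}\eta)\cdot v\,\mathrm{d}x=\int\eta\,(\mathcal{T}^{*}\!\cdot v)\,\mathrm{d}x$. At the formal level every cancellation is forced by the Hamiltonian structure of \eqref{eq:mfg2}, and the non-separability of $H_2$ surfaces only as the extra adjoint term in the costate equation, which the computation absorbs automatically.
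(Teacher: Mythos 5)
Your argument is correct at the same formal level as the paper's, but it follows a genuinely different route. The paper proves the statement as a verification theorem: starting from the Legendre-transform inequality $\int L(\alpha_s)q_s\,\mathrm{d}x\geq\int\left[(\mathcal{T}q_s+\alpha_s)\cdot p_s-H_2(x,p_s,q_s)\right]q_s\,\mathrm{d}x$, valid for \emph{any} admissible pair $(q_s,\alpha_s)$ and any $p_s$, it inserts $p_s=-\nabla\frac{\delta\mathcal{V}}{\delta q}(q_s,x,s)$, uses the continuity equation and the chain rule to telescope the right-hand side into $\mathcal{V}(q,t)-\mathcal{G}(q_T)$ when the functional HJE \eqref{eq:master-mfg2} holds, concluding $\inf\mathcal{I}\geq\mathcal{V}(q,t)$ with equality along the Euler--Lagrange solution where $p_s=\alpha_s=\nabla\varphi_s$. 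You instead work forward from the definition of $\mathcal{V}$ as the optimal value: dynamic programming along the optimal trajectory gives $\frac{\mathrm{d}}{\mathrm{d}s}\mathcal{V}(q_s,s)$ explicitly, and a first-variation (envelope) computation with frozen control, pairing the linearized state $\eta_s$ against $\varphi_s$ and using \eqref{eq:hje2}, identifies the costate $\varphi_s=-\frac{\delta\mathcal{V}}{\delta q}(q_s,\cdot,s)$; substituting into the chain rule with \eqref{eq:fpe2} then reproduces $\mathcal{H}_2$ from \eqref{eq:hamil_mfg2}. Your cancellation bookkeeping is right: the adjoint term $\mathcal{T}^{*}\cdot(\nabla\varphi_s q_s)$ exactly absorbs the drift-linearization term $\int(\mathcal{T}\eta_s)\cdot(q_s\nabla\varphi_s)\,\mathrm{d}x$, and the final collection of terms matches the expanded HJE in the proposition. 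What each approach buys: the paper's inequality argument doubles as a sufficiency/verification statement (a solution of the functional HJE certifies a lower bound on the cost for arbitrary admissible controls), whereas the equality step there essentially takes $\varphi_s=-\frac{\delta\mathcal{V}}{\delta q}$ as given; your route derives that costate identity as a conclusion, which is exactly the "in addition" clause of the proposition, at the price of assuming the dynamic programming principle and differentiability of the value under the infimum --- caveats you correctly flag, and which are comparable to the smoothness assumptions left implicit in the paper's proof.
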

The proof  is essentially in parallel with the proofs in \cite{gao2022master,cardaliaguet2019master} according to the above specific
Hamiltonian formulation \eqref{eq:hamil2}.
\begin{proof}
The property of the Lagrangian functional guarantees that for any
function $p_{s}$, we have
\begin{equation}
 \int L\left(\alpha_{s}\right)q_{s}\left(x\right)\mathrm{d}x
\geq  \int\left[\left(\mathcal{T}q_{s}\left(x\right)+\alpha_{s}\right)\cdot p_{s}\left(x\right)-H_{2}\left(x,p_{s},q_{s}\right)\right]q_{s}\left(x\right)\mathrm{d}x.\label{eq:lagrangian}
\end{equation}
Above, we use $b_{s}=\mathcal{T}q_{s}+\alpha_{s}$. The inequality is valid for any $p_{s}$ and the equality holds when $p_s=\alpha_s$.

By taking $p_{s}\left(x\right)=-\nabla\frac{\delta\mathcal{V}}{\delta q}\left(q_{s},x,s\right)$,
the right hand side of the above inequality \eqref{eq:lagrangian} yields
\begin{align*}
 & \int\left\{-\left[\mathcal{T}q_{s}\left(x\right)+\alpha_{s}\right]\cdot\nabla\frac{\delta\mathcal{V}}{\delta q}\left(q_{s},x,s\right)-H_{2}\left(x,-\nabla\frac{\delta\mathcal{V}}{\delta q}\left(q_{s},x,s\right),q_{s}\right)\right\}q_{s}\left(x\right)\mathrm{d}x +\mathcal{F}\left(q_s\right)\\
= & \int\left\{ \nabla\cdot\left[\left(\mathcal{T}q_{s}+\alpha_{s}\right)q_{s}\right]\frac{\delta\mathcal{V}}{\delta q}\left(q_{s},x,s\right)-H_{2}\left(x,-\nabla\frac{\delta\mathcal{V}}{\delta q}\left(q_{s},x,s\right),q_{s}\right)q_{s}\left(x\right)\right\} \mathrm{d}x+\mathcal{F}\left(q_s\right)\\
= & -\partial_{s}\mathcal{V}\left(q_{s},s\right)-\int\partial_{s}q_{s}\frac{\delta\mathcal{V}}{\delta q}\left(q_{s},x,s\right)\mathrm{d}x +D\int q_{s}\Delta\frac{\delta\mathcal{V}}{\delta q}\left(q_{s},x,s\right)\mathrm{d}x\\
 & +\partial_{s}\mathcal{V}\left(q_{s},s\right)  -\int H_{2}\left(x,-\nabla\frac{\delta\mathcal{V}}{\delta q}\left(q_{s},x,s\right),q_{s}\right)q_{s}\left(x\right)\mathrm{d}x+\mathcal{F}\left(q_s\right)\\
= & -\frac{\mathrm{d}}{\mathrm{d}s}\mathcal{V}\left(q_{s},s\right)+\partial_{s}\mathcal{V}\left(q_{s},s\right)-\mathcal{H}_{2}\left(q_{s},-\frac{\delta\mathcal{V}}{\delta q}\left(q_{s},x,s\right)\right).
\end{align*}
The second equality above uses the continuity equation \eqref{eq:fpe2} for $q_{s}$.
The third equality uses the total differentiation of $\mathcal{V}\left(q_{s},s\right)$ about $s$
and the definition of the Hamiltonian $\mathcal{H}_{2}$ with $\varphi_{s}\left(x\right)=-\frac{\delta\mathcal{V}}{\delta q}\left(q_{s},x,s\right)$.
Next, by taking the time integration between $t$ and $T$ and noticing
$\mathcal{V}\left(q_{T},T\right)=\mathcal{G}\left(q_{T}\right)$
and $\mathcal{V}\left(q_{t},t\right)=\mathcal{V}\left(q,t\right)$,
the left hand side of the inequality \eqref{eq:lagrangian} gives the
total cost $\mathcal{I}\left(q\left(\cdot\right),\alpha\left(\cdot\right)\right)$ in
\eqref{eq:loss_pde} with any solutions $\left(\rho_{s},\alpha_{s}\right)$
satisfying the continuity equation. Therefore, it confirms that if the
functional HJE \eqref{eq:master-mfg2} is satisfied, the solution always
gives the infinimum of the total cost.
\[
\underset{q\left(\cdot\right),\alpha\left(\cdot\right)}{\inf}\mathcal{I}\left(q\left(\cdot\right),\alpha\left(\cdot\right)\right)\geq\mathcal{V}\left(q,t\right).
\]
Finally, it is noticed that the optimal solution $q_s,\alpha_s=\nabla\phi_s$ of the optimization problem \eqref{eq:loss_pde}  is reached at the classical solution to the Euler-Lagrangian equations \eqref{eq:hamil2}. 
We see that the equality is achieved with $p_{s}\left(x\right)=\alpha_s\left(x\right)=\nabla\varphi_{s}\left(x\right)=-\nabla\frac{\delta\mathcal{V}}{\delta q}\left(q_{s},x,s\right)$. Thus the optimal value function $\mathcal{V}\left(q,t\right)$ is reached at the critical solution given by the Euler-Lagrangian equations.
This finishes the proof that $\mathcal{V}\left(q,t\right)$ gives
the minimum of the total cost.
\end{proof}

\subsection{A link between the MFG-1 and MFG-2 models}
From the previous discussions, we find that the MFG-1 model \eqref{eq:loss_tracer1} gives decoupled equations for $\rho_s$ and $\phi_s$, specially with linear continuity equation \eqref{eq:fpe1} given the flow and control fields $q_s$ and $\phi_s$.  On the other hand, the MFG-2 model \eqref{eq:loss_pde} leads to closely coupled forward and backward nonlinear equations, which require additional efforts for strategies to effectively solve the optimal functions.
Here, we establish a link between the two models, such that the MFG-2 model can be approached by an `approximated' form of the MFG-1 model. This enables the effective computational strategy that will be discussed next in Section~\ref{sec:comp_strategy}.

In order to develop separable approximate model, we introduce the functional Hamiltonian on the functions $\left(\tilde{q},\tilde{\phi}\right)$ based on some given solutions $\left(q,\varphi\right)$
\begin{equation}
\tilde{\mathcal{H}}_{2}\left(\tilde{q},\tilde{\phi};q,\varphi\right)=\int \left[\left(\frac{1}{2}\left|\nabla\tilde{\phi}\right|^{2}+\mathcal{T}q\cdot\nabla\tilde{\phi}+D\Delta\tilde{\phi}\right)\tilde{q}+\left(q\nabla\varphi\cdot\mathcal{T}\tilde{q}-F\left(x,q\right)\tilde{q}\right)\right]\mathrm{d}x.\label{eq:hamil_mfg2_mod}
\end{equation}
Comparing with the Hamiltonian \eqref{eq:hamil_mfg2}, we modify the coupling term, $\left(\nabla\varphi\cdot\mathcal{T}q\right)q$, into two separable components, $\left(\nabla\varphi\cdot\mathcal{T}\tilde{q}\right)q$ and $\left(\nabla\tilde{\phi}\cdot\mathcal{T}q\right)\tilde{q}$. The running cost $\mathcal{F}\left(q\right)$ is also replaced by the separable one, $\int F\left(x,q\right)\tilde{q}$. Therefore, this new Hamiltonian functional $\tilde{\mathcal{H}}_2$ fits into the MFG-1 model \eqref{eq:hamil_mfg1} with a modified running cost function
\[
\tilde{F}\left(x,q,\varphi\right)=F\left(x,q\right)-\mathcal{T}^{*}\cdot\left(q\nabla\varphi\right).
\]
The new Hamiltonian functional \eqref{eq:hamil_mfg2_mod} leads to the following decoupled forward and backward equations 
\addtocounter{equation}{0}\begin{subequations}\label{eq:mfg2_decouple} 
\begin{align}
\partial_{s}\tilde{q}_{s} & =\frac{\delta\tilde{\mathcal{H}}_{2}}{\delta\tilde{\phi}}\left(\tilde{q}_{s},\tilde{\phi}_{s};q_s,\varphi_s\right) = -\nabla\cdot\left[\left(\mathcal{T}q_{s}+\nabla\tilde{\phi}_{s}\right)\tilde{q}_{s}\right]  + D\Delta \tilde{q}_{s},\label{eq:fpe2_de}\\
\partial_{s}\tilde{\phi}_{s} & =-\frac{\delta\tilde{\mathcal{H}}_{2}}{\delta \tilde{q}}\left(\tilde{q}_{s},\tilde{\phi}_{s};q_s,\varphi_s\right) = -\frac{1}{2}\left|\nabla\tilde{\phi}_{s}\right|^{2}-\mathcal{T}q_{s}\cdot\nabla\tilde{\phi}_{s} - D\Delta\tilde{\phi}_{s} + \tilde{F}(x,q_s,\varphi_s), \label{eq:hje2_de}
\end{align}
\end{subequations}
with initial condition $\tilde{q}_{t}\left(x\right)=\tilde{q}\left(x\right)$ and final condition $\tilde{\phi}_{T}\left(x\right)=-G(x,q_T)$.
Notice that the  equations \eqref{eq:mfg2_decouple} agree with the MFG-1 model \eqref{eq:mfg1} using the new running cost function $\tilde{F}$. Therefore, the solutions $\left(\tilde{q}_{s},\tilde{\phi}_{s}\right),t\leq s\leq T$ to \eqref{eq:mfg2_decouple} solve the corresponding optimization problem as a \emph{modified MFG-1 model}
\begin{equation}\label{eq:loss_modified}
\begin{aligned}
\inf_{\tilde{q}\left(\cdot\right),\tilde{\alpha}\left(\cdot\right)} \bbs{\int G(x,q_T) \tilde{q}_{T} \left(x\right)\mathrm{d}x +\int_{t}^{T}\left\{ \int \left[L\left(\tilde{\alpha}_{s}\right) +  \tilde{F}\left(x,q_{s},\alpha_s \right)\right]\tilde{q}_{s}\left(x\right)\mathrm{d}x\right\} \mathrm{d}s }, \\
\st  \,\, \partial_{s}\tilde{q}_{s}  +\nabla\cdot\left[\left(\mathcal{T}q_{s}+\tilde{\alpha}_{s}\right)\tilde{q}_{s}\right]=D\Delta \tilde{q}_{s},\quad t\leq s\leq T,\quad \mathrm{and}\; \tilde{q}_{t}=\tilde{q},
\end{aligned}
\end{equation}
conditional on the given functions $\left(q_s,\varphi_s\right)$ with $\alpha_s=\nabla\varphi_s$. Here, we simply use $\tilde{F}\left(x,q,\varphi\right)$ to represent $\tilde{F}\left(x,q,\varphi\right)$ when $\alpha=\nabla \varphi$.  More important, if we can find the optimal solution such that, $\tilde{q}_s=q_s,\tilde{\alpha}_s=\alpha_s$, the optimal solution of \eqref{eq:mfg2_decouple} gives the solution to the MFG-2 model \eqref{eq:mfg2}. This implies that we may seek the optimal solution for the coupled MFG-2 model \eqref{eq:mfg2} through the solutions from the above decoupled MFG-1 model \eqref{eq:loss_modified} as a fixed point problem.

Next, if we take the function $\varphi_s$ from the solutions of the MFG-2 model \eqref{eq:hje2} according to any solution of $q_s$, this gives the optimal value function based on \eqref{eq:loss_modified}
\begin{equation}
\tilde{\mathcal{U}}\left(\tilde{q},t; q\left(\cdot\right)\right) = \inf_{\tilde{q}\left(\cdot\right),\tilde{\alpha}\left(\cdot\right)} \mathcal{J}\left(\tilde{q}\left(\cdot\right),\tilde{\alpha}\left(\cdot\right);q\left(\cdot\right)\right), \quad \forall t\leq T,\label{eq:optim_loss}
\end{equation}
where $\tilde{q}_t=\tilde{q}$ is the initial distribution of the tracers, and $q\left(\cdot\right)\coloneqq q_s\left(x\right),t\leq s\leq T$ indicates the dependence on the whole continuous curve. The following lemma can be found by comparing the  solutions in the corresponding HJEs.
\begin{lemma}\label{lem:ctrl_equiv}
Given any $t\leq T$ and $\left(q_s,\varphi_s\right),t\leq s\leq T$ to be the unique classical solutions of the MFG-2 model \eqref{eq:mfg2}, the optimal solution $\left(\tilde{q}_s,\tilde{\phi}_s\right)$ of \eqref{eq:loss_modified} with the optimal value function $\mathcal{U}\left(\tilde{q},t;q\left(\cdot\right)\right)$ exists uniquely and   satisfies the following relation
\begin{equation}\label{eq:phi_iden}
\tilde{\phi}_s\left(x\right) = -\frac{\delta\tilde{\mathcal{U}}}{\delta \tilde{q}}\left(\tilde{q}_s,x,s;q\left(\cdot\right)\right) = \varphi_s\left(x\right).
\end{equation}
\end{lemma}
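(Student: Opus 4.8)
The plan is to recognize the modified problem \eqref{eq:loss_modified} as a genuine instance of the MFG-1 model \eqref{eq:loss_tracer1}, so that Proposition~\ref{prop:hje1_funcnal} applies verbatim, and then to identify the backward variable $\tilde\phi_s$ with $\varphi_s$ by a uniqueness argument for the backward viscous Hamilton--Jacobi equation.

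First I would fix the background curve $\left(q_s,\varphi_s\right)_{t\leq s\leq T}$, the unique classical solution of the MFG-2 system \eqref{eq:mfg2}. Once this curve is frozen, the quantity $\tilde F(x,q_s,\varphi_s)=F(x,q_s)-\mathcal{T}^{*}\cdot\left(q_s\nabla\varphi_s\right)$ is merely a prescribed function of $(x,s)$, so \eqref{eq:loss_modified} is exactly the MFG-1 problem \eqref{eq:loss_tracer1} with advection field $\mathcal{T}q_s$ and running cost $\tilde F$; its Euler--Lagrange system is the decoupled pair \eqref{eq:mfg2_decouple} with terminal data $\tilde\phi_T=-G(x,q_T)$. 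Because this system is decoupled, existence and uniqueness of the optimizer follow at once: equation \eqref{eq:hje2_de} is solved backwards from $\tilde\phi_T$ independently of $\tilde q$, and then the linear Fokker--Planck equation \eqref{eq:fpe2_de} is solved forwards from $\tilde q_t=\tilde q$ with the now-known drift $\mathcal{T}q_s+\nabla\tilde\phi_s$; classical well-posedness of each gives a unique pair $\left(\tilde q_s,\tilde\phi_s\right)$. Applying Proposition~\ref{prop:hje1_funcnal} to this model (with $F$ replaced by $\tilde F$) then yields $\tilde\phi_s=-\frac{\delta\tilde{\mathcal{U}}}{\delta\tilde q}\left(\tilde q_s,x,s;q(\cdot)\right)$ for every $s\in[t,T]$, which is the first equality in \eqref{eq:phi_iden}.

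Next I would prove $\tilde\phi_s=\varphi_s$. Rewriting the MFG-2 backward equation \eqref{eq:hje2} as $\partial_s\varphi_s=-\tfrac12\left|\nabla\varphi_s\right|^2-\mathcal{T}q_s\cdot\nabla\varphi_s-D\Delta\varphi_s+\big(F(x,q_s)-\mathcal{T}^{*}\cdot(q_s\nabla\varphi_s)\big)$, one sees that the coupling term $\mathcal{T}^{*}\cdot(\nabla\varphi_s q_s)$ is exactly the piece absorbed into $\tilde F$; hence $\varphi_s$ satisfies the same backward equation \eqref{eq:hje2_de} that defines $\tilde\phi_s$, with the identical terminal condition $\varphi_T=-G(x,q_T)=\tilde\phi_T$. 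By uniqueness of classical solutions of this viscous Hamilton--Jacobi equation (part of the standing hypothesis that \eqref{eq:mfg2} is classically well-posed), $\tilde\phi_s=\varphi_s$ on $[t,T]$; feeding this back into \eqref{eq:fpe2_de} also shows that $\tilde q_s$ is the unique solution of the linear Fokker--Planck equation with drift $\mathcal{T}q_s+\nabla\varphi_s$ and datum $\tilde q$. Chaining the two identities gives \eqref{eq:phi_iden}. The bookkeeping is routine; the one load-bearing ingredient is the uniqueness of the classical solution of \eqref{eq:hje2_de}, which is precisely what allows the two a priori different backward variables to be identified and is already granted by the hypothesis. A minor notational caveat: the value function written $\mathcal{U}$ in the statement is the object $\tilde{\mathcal{U}}$ of \eqref{eq:optim_loss}, and the identity holds for any initial datum $\tilde q$, the choice $\tilde q=q$ being what makes the fixed-point reading of \eqref{eq:loss_modified} close onto the MFG-2 solution.
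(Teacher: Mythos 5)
Your proposal is correct and follows essentially the same route as the paper: you recognize \eqref{eq:loss_modified} as an MFG-1 problem with running cost $\tilde{F}$, obtain the first equality of \eqref{eq:phi_iden} from Proposition~\ref{prop:hje1_funcnal}, and identify $\tilde{\phi}_s$ with $\varphi_s$ by rewriting \eqref{eq:hje2} with the coupling term $\mathcal{T}^{*}\cdot\left(q_s\nabla\varphi_s\right)$ absorbed into the source, matching terminal data, and invoking uniqueness of the backward viscous HJE. Your added remarks on existence/uniqueness of the decoupled optimizer and on propagating $\tilde{\phi}_s=\varphi_s$ into the forward equation are consistent elaborations of what the paper leaves implicit.
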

\begin{proof}
Since $\varphi_s$ is given by the optimal solution of the MFG-2 model, it satisfies the equation from the Hamiltonian \eqref{eq:hamil_func2}
\[
\partial_{s}\varphi_{s} =-\frac{\delta\mathcal{H}_{2}}{\delta q}\left(q_{s},\varphi_{s}\right) =  -H_2\left(x, \nabla\varphi_{s}, q_{s}\right)-D\Delta\varphi_{s} + F\left(x,q_{s}\right) -\mathcal{T}^{*}\cdot\left(q_{s}\nabla\varphi_{s}\right).
\]
Similarly, $\tilde{\phi}_{s}$ is the optimal solution of the problem \eqref{eq:loss_modified}, which satisfies the MFG-1 model with the new running cost $\tilde{F}$. Thus using the Hamiltonian \eqref{eq:hamil_mfg2_mod} we have,
\[
\partial_{s}\tilde{\phi}_{s} =-\frac{\delta\tilde{\mathcal{H}}_{2}}{\delta \tilde{q}}\left(\tilde{q}_{s},\tilde{\phi}_{s};q_s,\varphi_s\right) =  -H_1\left(x,\nabla\tilde{\phi}_{s};q_s\right)-D\Delta\tilde{\phi}_{s} + \tilde{F}\left(x,q_{s},\varphi_s\right).
\]
Comparing the above two equations and noticing by definition that $H_1\left(x,p;q\right)=H_2\left(x,p,q\right)$ and $\tilde{F}\left(x,q_{s},\varphi_s\right)=F\left(x,q_{s}\right) -\mathcal{T}^{*}\cdot\left(q_{s}\nabla\varphi_{s}\right)$. In addition, $\varphi_T\left(x\right)=\tilde{\phi}_T\left(x\right)=-G\left(x,q_T\right)$ have the same terminal condition. 
We obtain $\tilde{\phi}_s=\varphi_s$ due to the uniqueness of solution to the HJE. And the first equality in \eqref{eq:phi_iden} comes by Proposition~\ref{prop:hje1_funcnal}.
\end{proof}

With the above lemma, we can link the two sets of optimal solutions $\left(\tilde{q}_s,\tilde{\phi}_s\right)$ and $\left(q_s,\varphi_s\right)$ from the MFG-1 and MFG-2 model accordingly as follows
\begin{theorem}\label{thm:link}
The optimal value function $\tilde{\mathcal{U}}\left(\tilde{q},t\right)$ in \eqref{eq:optim_loss} of the problem \eqref{eq:loss_modified} given the solutions $\left(q_{s},\varphi_{s}\right), t\leq s\leq T$ of the MFG-2 model \eqref{eq:mfg2} satisfy the following functional HJE
\begin{equation}
\begin{aligned}
\partial_{t}\tilde{\mathcal{U}}\left(\tilde{q},t\right) &-\frac{1}{2}\int\left|\nabla\frac{\delta\tilde{\mathcal{U}}}{\delta \tilde{q}}\left(\tilde{q},x,t\right)\right|^{2}\tilde{q}\left(x\right)\mathrm{d}x +\int F\left(x,q_t\right)\tilde{q}\left(x\right)\mathrm{d}x+D\int\left[ \tilde{q}\left(x\right)\Delta\frac{\delta\tilde{\mathcal{U}}}{\delta \tilde{q}}\left(\tilde{q},x,t\right)\right] \mathrm{d}x \\
&+\int \left[\tilde{q}\left(x\right)\mathcal{T}q_t\left(x\right)\cdot\nabla\frac{\delta\tilde{\mathcal{U}}}{\delta \tilde{q}}\left(\tilde{q},x,t\right)+q_t\left(x\right)\mathcal{T}\tilde{q}\left(x\right)\cdot\nabla\frac{\delta\tilde{\mathcal{U}}}{\delta \tilde{q}}\left(\tilde{q},x,t\right)\right]\mathrm{d}x=0,\,\,\,  t\leq T,\\
& \tilde{\mathcal{U}}\left(\tilde{q},T\right) =\int G(x,q_T) \tilde{q}\left(x\right)\mathrm{d}x,\label{eq:hje_funal_modify}
\end{aligned}
\end{equation}
Especially, with the same initial condition $\tilde{q}_t=q_t=q$, the modified MFG-1 model \eqref{eq:loss_modified} will give the same optimal solution as the MFG-2 model 
\[
\tilde{q}_s\left(x\right)=q_s\left(x\right),\; \tilde{\phi}_s\left(x\right)=\varphi_s\left(x\right),\quad \mathrm{for}\; t\leq s\leq T.
\]
\end{theorem}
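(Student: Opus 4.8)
The strategy is to treat the modified problem \eqref{eq:loss_modified} as a bona fide instance of the MFG-1 model \eqref{eq:loss_tracer1} and then harvest both conclusions from Proposition~\ref{prop:hje1_funcnal} and Lemma~\ref{lem:ctrl_equiv}. Concretely, once the MFG-2 solution $\left(q_s,\varphi_s\right)_{t\leq s\leq T}$ is frozen, \eqref{eq:loss_modified} is the MFG-1 optimal control problem with prescribed background advection $\mathcal{T}q_s$ and running cost $\tilde{F}\left(x,q_s,\varphi_s\right)=F\left(x,q_s\right)-\mathcal{T}^{*}\cdot\left(q_s\nabla\varphi_s\right)$, and its Hamiltonian functional is exactly the separable $\tilde{\mathcal{H}}_2$ of \eqref{eq:hamil_mfg2_mod}. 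Therefore all of the machinery leading to Proposition~\ref{prop:hje1_funcnal} transfers verbatim with $\mathcal{H}_1$ replaced by $\tilde{\mathcal{H}}_2$.

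First I would record, from that transferred statement, the abstract functional HJE $\partial_{t}\tilde{\mathcal{U}}-\tilde{\mathcal{H}}_2\left(\tilde{q},-\frac{\delta\tilde{\mathcal{U}}}{\delta\tilde{q}};q_t,\varphi_t\right)=0$ with terminal data $\tilde{\mathcal{U}}\left(\tilde{q},T\right)=\int G\left(x,q_T\right)\tilde{q}\,\ud x$, together with the companion identity $\tilde{\phi}_t=-\frac{\delta\tilde{\mathcal{U}}}{\delta\tilde{q}}\left(\tilde{q}_t,x,t;q(\cdot)\right)$. Plugging $\tilde{\phi}=-\delta\tilde{\mathcal{U}}/\delta\tilde{q}$ into the explicit expression \eqref{eq:hamil_mfg2_mod} for $\tilde{\mathcal{H}}_2$ reproduces, term by term, all of \eqref{eq:hje_funal_modify} except that the cross term appears as $-\int q_t\,\nabla\varphi_t\cdot\mathcal{T}\tilde{q}\,\ud x$ rather than $\int q_t\,\mathcal{T}\tilde{q}\cdot\nabla\frac{\delta\tilde{\mathcal{U}}}{\delta\tilde{q}}\,\ud x$. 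To reconcile the two I would invoke the first equality of Lemma~\ref{lem:ctrl_equiv} at time $s=t$, namely $\varphi_t=-\frac{\delta\tilde{\mathcal{U}}}{\delta\tilde{q}}\left(\tilde{q},x,t;q(\cdot)\right)$, which is available here for an \emph{arbitrary} initial density $\tilde{q}$: the backward equation \eqref{eq:hje2_de} that pins down $\tilde{\phi}_s$ (hence, via Lemma~\ref{lem:ctrl_equiv}, also $\varphi_s$) contains no occurrence of $\tilde{q}$, so the relation $\tilde{\phi}_t=\varphi_t$ is insensitive to the choice of $\tilde{q}_t$. Substituting $\nabla\varphi_t=-\nabla\delta\tilde{\mathcal{U}}/\delta\tilde{q}$ and collecting terms then yields \eqref{eq:hje_funal_modify}.

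For the last claim I would specialize to $\tilde{q}_t=q_t=q$. Lemma~\ref{lem:ctrl_equiv} already supplies $\tilde{\phi}_s=\varphi_s$ on $\left[t,T\right]$, so the forward equation \eqref{eq:fpe2_de} for $\tilde{q}_s$ becomes $\partial_{s}\tilde{q}_s+\nabla\cdot\left[\left(\mathcal{T}q_s+\nabla\varphi_s\right)\tilde{q}_s\right]=D\Delta\tilde{q}_s$, which is literally the MFG-2 continuity equation \eqref{eq:fpe2} for $q_s$ viewed as a \emph{linear} advection-diffusion equation with the now-given drift $\mathcal{T}q_s+\nabla\varphi_s$. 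Since the two Cauchy data coincide, uniqueness for this linear parabolic problem gives $\tilde{q}_s=q_s$ for all $t\leq s\leq T$, whence also $\tilde{\phi}_s=\varphi_s$.

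The delicate point lies in the second paragraph: one must be sure that the identity $\varphi_t=-\delta\tilde{\mathcal{U}}/\delta\tilde{q}$ may legitimately be used as a pointwise relation in the \emph{argument} $\tilde{q}$ of the value functional, not merely along the optimal curve, and this rests entirely on the decoupling of \eqref{eq:hje2_de} from $\tilde{q}$. Beyond that, the argument is routine modulo the standing regularity assumptions implicit throughout this section: classical solvability of the systems \eqref{eq:mfg2} and \eqref{eq:mfg2_decouple}, differentiability of the value functional so that $\delta\tilde{\mathcal{U}}/\delta\tilde{q}$ is well defined, and well-posedness of the linear parabolic Cauchy problem used in the comparison. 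The concluding assertion is then immediate once Lemma~\ref{lem:ctrl_equiv} and linear uniqueness are granted.
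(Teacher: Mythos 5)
Your proposal is correct and follows essentially the same route as the paper: apply Proposition~\ref{prop:hje1_funcnal} with the separable Hamiltonian $\tilde{\mathcal{H}}_2$ of \eqref{eq:hamil_mfg2_mod}, use the identity $\varphi_t=-\frac{\delta\tilde{\mathcal{U}}}{\delta\tilde{q}}$ from Lemma~\ref{lem:ctrl_equiv} to rewrite the cross term and obtain \eqref{eq:hje_funal_modify}, and then conclude $\tilde{q}_s=q_s$ from $\tilde{\phi}_s=\varphi_s$ together with uniqueness for the linear continuity equation \eqref{eq:fpe2_de}. Your extra remark on why $\varphi_t=-\delta\tilde{\mathcal{U}}/\delta\tilde{q}$ can be used in the functional argument (the backward equation \eqref{eq:hje2_de} being independent of $\tilde{q}$) is a point the paper passes over implicitly, but it does not change the argument.
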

Above, we suppressed the implicit dependence  $\mathcal{U}\left(\tilde{q},s\right)\coloneqq\mathcal{U}\left(\tilde{q},s;q\left(\cdot\right)\right)$ on the entire curve $q\left(\cdot\right)$ for cleaner notations. Notice that the value function $\tilde{\mathcal{U}}$ here defined in \eqref{eq:optim_loss} is different from $\mathcal{V}$ in \eqref{eq:loss_pde}, while Theorem~\ref{thm:link} tells that we can recover the same optimal solution of the MFG-2 model by solving the easier non-coupled system \eqref{eq:mfg2_decouple}.
\begin{proof}
From Proposition~\ref{prop:hje1_funcnal}, we have using the new Hamiltonian \eqref{eq:hamil_mfg2_mod}
\[
\begin{aligned}
&\partial_{t}\tilde{\mathcal{U}}\left(\tilde{q},q,t\right) =\tilde{\mathcal{H}}_{2}\left(\tilde{q},-\frac{\delta\tilde{\mathcal{U}}}{\delta\tilde{q}}\left(\tilde{q},x,t\right);q_t,\varphi_t\right)\\
=&\int \left[H_{1}\left(x,-\frac{\delta\tilde{\mathcal{U}}}{\delta\tilde{q}};q_t\right)-D\Delta\frac{\delta\tilde{\mathcal{U}}}{\delta \tilde{q}}-\tilde{F}\left(x,q_t,\varphi_t\right)\right]\tilde{q}\left(x\right)\mathrm{d}x.\\
\end{aligned}
\]
This gives the HJE \eqref{eq:hje_funal_modify} using the explicit expression for $H_1$ and $\varphi_t=-\frac{\delta\tilde{\mathcal{U}}}{\delta\tilde{q}}\left(\tilde{q}_t,x,t\right)$ from Lemma~\ref{lem:ctrl_equiv}.

Next, Since $\left(q_s,\varphi_s\right), \, s\leq t\leq T$ is given by the optimal solution of \eqref{eq:hamil2}, we have
\[
\begin{aligned}
\partial_{s}q_{s}&=-\nabla\cdot\left[\left(\mathcal{T}q_{s}+\nabla\varphi_{s}\right)q_{s}\right] +D\Delta q_{s}\\
&=-\nabla\cdot\left[\left(\mathcal{T}q_{s}+\nabla\tilde{\phi}_{s}\right)q_{s}\right] +D\Delta q_{s}.
\end{aligned}
\]
with $\tilde{\phi}_s=\varphi_s, \, s\leq t\leq T$ from Lemma~\ref{lem:ctrl_equiv}. Then the identical solutions for  $\tilde{q}_s$, $q_s$ can be found directly by comparing with the equation  \eqref{eq:fpe2_de} with the same initial value and by the uniqueness of the linear equation \eqref{eq:fpe2_de}.
\end{proof}

\section{Computational strategies for the MFG models}\label{sec:comp_strategy}
With the explicit formulations for the MFG models, we develop practical computational algorithms for solving the MFG equations to recover the optimal solution and control. Especially with the developed link between the MFG-1 and MFG-2 models, we can solve the coupled non-separable MFG-2 system based on an iterative strategy using a modified form of the decoupled MFG-1 system.

\subsection{Practical choices of the cost functions}
First, we propose the cost (activation) functional  to be optimized in the control problems in the following form
\begin{equation}
\mathcal{G}\left(q_{T}\right)+\int_{t}^{T}\mathcal{L}\left(q_{s},\alpha_{s}\right)\mathrm{d}s = 
\mathcal{G}\left(q_{T}\right) +\int_{t}^{T}\left[\int \tL\left(q_s,\alpha_s\right)\mathrm{d}x+\mathcal{F}\left(q_s\right)\right]\mathrm{d}s,\label{eq:cost}
\end{equation}
where $\mathcal{G}\left(q\right)$ quantifies the \emph{terminal error} in the final
target state and $\mathcal{L}\left(q,\alpha\right)$ is the
\emph{running loss} to characterize the cost along the control process, which is further decomposed to the cost on the control forcing $\tL$ and the cost on the running state $\mathcal{F}$. 
First, the control cost $\tL\left(q,\alpha\right)$ is set as
\begin{equation}
\tL \left(q,\alpha\right)=L(\alpha)q, \quad L(\alpha)=\frac{1}{2}\left|\alpha\right|^{2}.\label{eq:cost_l}
\end{equation}
The term regularizes the strength of the control
effect $\alpha$ according to the flow measure $q$. In particular,
in the region with a large value of $q$, a condensed particle concentration
(in term of probability measure) or a strongly turbulent fluid field
(in term of the flow vorticity) is implied. Thus the average kinetic cost for this region is the quadratic function of the control $\vert\alpha\vert^2$ weighted by the particle concentration $q$. 

Specifically, the functional $\mathcal{F}\left(q\right)$ calibrates the
energy fluctuations away from the initial and final states during the entire control process, and $\mathcal{G}\left(q\right)$ calibrates the difference at the final time step $t=T$. It is natural to require that the flow energy
cannot deviate too large away from the starting initial state $Q_i$ so that the
level of turbulence is maintained in a controlled level, while it should also approach the final target state $Q_f$ in a rapid rate. Therefore, we consider the following two common choices of the functionals according to the initial and final states $Q_i\left(x\right)$ and $Q_f\left(x\right)$:
\begin{itemize}
\item \emph{$L_{2}$-distance}: the cost functionals compute the mean square deviation from initial and final target
state using the linear combination $q^{\prime}=\gamma\left(q-Q_i\right)+\left(1-\gamma\right)\left(q-Q_f\right)$ with $0\leq\gamma\leq 1$ and $u^{\prime}=\mathcal{T}q^{\prime}$, that is,
\addtocounter{equation}{0}\begin{subequations}\label{eq:cost1} 
\begin{align}
\mathcal{F}\left(q\right) =\frac{1}{2}\int\left|u^{\prime}\right|^{2}\mathrm{d}x =\frac{1}{2}\int\left|\mathcal{T}q^{\prime}\right|^{2}\mathrm{d}x,\quad&\mathrm{with\;}F\left(q,x\right)\coloneqq\frac{\delta\mathcal{F}}{\delta q}=\mathcal{T}^{*}\cdot\mathcal{T}q^{\prime},\label{eq:cost_f1}\\
\mathcal{G}\left(q_{T}\right) =\frac{1}{2}\int\left|q_{T}-Q_{f}\right|^{2}\mathrm{d}x,\quad&\mathrm{with\;}G\left(q,x\right)\coloneqq\frac{\delta\mathcal{G}}{\delta q}=q_{T}-Q_{f}.\label{eq:cost_g1}
\end{align}
\end{subequations}
\item \emph{KL-divergence}: since $q$ can be viewed as the probability density of the Lagrangian tracer field, the cost functionals can compare the KL-divergence (relative entropy) with the distributions from the linear combination of the initial and final target states $\bar{Q}=\gamma Q_i+\left(1-\gamma\right)Q_f$ with $0\leq\gamma\leq 1$, that is
\addtocounter{equation}{0}\begin{subequations}\label{eq:cost2} 
\begin{align}
\mathcal{F}\left(q\right) =D_{\mathrm{KL}}\left(q,\bar{Q}\right) =\int q \log\frac{q}{\bar{Q}}\mathrm{d}x,\;\;&\mathrm{with\;}F\left(q,x\right)\coloneqq\frac{\delta\mathcal{F}}{\delta q}=1+\log\frac{q}{\bar{Q}},\label{eq:cost_f2}\\
\mathcal{G}\left(q_{T}\right) =D_{\mathrm{KL}}\left(q_T,Q_f\right)=\int q_T \log\frac{q_T}{Q_f}\mathrm{d}x,\;\;&\mathrm{with\;}G\left(q,x\right)\coloneqq\frac{\delta\mathcal{G}}{\delta q}=1+\log\frac{q_T}{Q_f}.\label{eq:cost_g2}
\end{align}
\end{subequations}
\end{itemize}

\subsection{Algorithms to solve the decoupled MFG-1 model}
Here, we propose computational strategies to solve the MFG models \eqref{eq:loss_tracer1} and \eqref{eq:loss_pde}. Equivalently, we can develop ensemble-based control strategies based on the stochastic control models \eqref{eq:loss_sde_tracer1} and \eqref{eq:loss_pde1}.
First, for the MFG-1 model \eqref{eq:loss_tracer1}, the optimal solution can be found by separately solving the decoupled forward and backward MFG equations \eqref{eq:mfg1}.
\begin{algorithm}[H]
\begin{algorithmic}[1] 
\Ensure{Given the flow vorticity field $q_{s}\left(x\right), s\in \left[t, T\right]$ and the initial   tracer density $\rho\left(x\right)$}
\State{solve the HJE \eqref{eq:hje1}  backwardly in time to get the function $\phi_s\left(x\right)$ with $\phi_T\left(x\right)=-G\left(x,q_T\right)$ at $s=T$.}
\State{recover the control for the entire time window $\alpha_s\left(x\right)=\nabla\phi_s\left(x\right),t\leq s\leq T$.}
\State{solve the continuity equation \eqref{eq:fpe1} forward in time using the optimal control $\alpha_s\left(x\right)$ to get the optimal tracer density starting from the initial configuration $\rho_t\left(x\right)=\rho\left(x\right)$.}
\end{algorithmic}
\caption{MFG-1 model for optimal control of tracer field using PDE model \label{alg:mfg1}}
\end{algorithm}
Notice that above in solving the forward equation \eqref{eq:fpe1}, an alternative approach is to adopt the SDE formulation \eqref{eq:loss_sde_tracer1} thus to get the approximated optimal density solution $\rho_s$ through an Monte-Carlo ensemble method. This approach could be advantageous in practical problems to efficiently recover the density distribution.
Therefore, we can propose the ensemble-based algorithm corresponding to Algorithm~\ref{alg:mfg1}.
\begin{algorithm}[H]
\begin{algorithmic}[1] 
\Ensure{Given the flow vorticity field $q_{s}\left(x\right), s\in \left[t, T\right]$ and the initial   tracer density $\rho\left(x\right)$}
\Require{generate random samples $X_{t}^{i}\sim\rho\left(x\right),i=1,\cdots,N$ according to the tracer density.}
\State{solve the HJE \eqref{eq:hje1}  backwardly in time to get the function $\phi_s\left(x\right)$ with $\phi_T\left(x\right)=-G\left(x,q_T\right)$ at $s=T$.}
\State{recover the control $\alpha_s\left(x\right)=\nabla\phi_s\left(x\right),t\leq s\leq T$ for the entire time window.}
\State{solve the SDE \eqref{eq:loss_sde_tracer1} forward in time independently for each sample trajectory $X_s^{i},t\leq s\leq T$.}
\State{recover the optimal tracer density using the empirical distribution $\rho^{N}_{s}\left(x\right)=\frac{1}{N}\sum_{i}\delta_{X_s^{i}}\left(x\right)$.}

\end{algorithmic}
\caption{MFG-1$'$ model for controlling tracer field using finite ensemble model\label{alg:mfg1_sde}}
\end{algorithm}
At the limit $N\rightarrow\infty$, Algorithm~\ref{alg:mfg1_sde} for finite ensemble approximation converges to the original Algorithm~\ref{alg:mfg1} at the continuum limit. This ensemble approach provides an effective alternative strategy to recover the target optimal tracer density without running the usually more expensive Fokker-Planck equation especially in the higher-dimensional cases. The ensemble approach will become more useful next for controlling the transition in flow states.

\subsection{Iterative algorithm to solve the coupled MFG-2 model}
Recall that the optimal solution for MFG-2 model solves the coupled MFG-2 system \ref{eq:mfg2}.
In solving the MFG-2 model \eqref{eq:loss_pde}, we need to deal with the coupled forward and backward equations \eqref{eq:mfg2}   together. To address this difficulty, we first solve a separable MFG according to Algorithm~\ref{alg:mfg1} or \ref{alg:mfg1_sde}. Then the optimal solution is achieved by an iterating approach as finding a fixed point.

We introduce  a `push forward' map, $\left(\tilde{q}_s,\tilde{\alpha}_s\right)= \mathcal{P}\left(q_s,\alpha_s\right)$, based on the solution of the modified MFG-1 model \eqref{eq:mfg2_decouple} with the given solution of $\left\{q_s,\alpha_s\right\},t\leq s\leq T$
\addtocounter{equation}{0}\begin{subequations}\label{eq:mfg2_modified} 
\begin{align}
\partial_{s}\tilde{q}_{s} &+\nabla\cdot\left[\left(\mathcal{T}q_{s}+\nabla\tilde{\phi}_{s}\right)\tilde{q}_{s}\right]  =D\Delta \tilde{q}_{s},\label{eq:fpe2_m}\\
\partial_{s}\tilde{\phi}_{s} &+\frac{1}{2}\left|\nabla\tilde{\phi}_{s}\right|^{2}+\mathcal{T}q_{s}\cdot\nabla\tilde{\phi}_{s} + D\Delta\tilde{\phi}_{s}  =F(x,q_s)-\mathcal{T}^{*}\cdot\left(q_{s}\alpha_{s}\right), \label{eq:hje2_m}
\end{align}
\end{subequations}
with the initial and final conditions $\tilde{q}_{t}\left(x\right)=q\left(x\right)$ and $\tilde{\phi}_{T}\left(x\right)=-G(x,q_T)$, and the new control $\tilde{\alpha}_s=\nabla \tilde{\phi}_s$. 
The above equations \eqref{eq:mfg2_modified} define a separable problem that is easy to solve individually. 
From Theorem~\ref{thm:link}, if we can find a fixed point of the map, $\left(q_s,\alpha_s\right)=\mathcal{P}\left(q_s,\alpha_s \right)$, based on the modified MFG-1 model \eqref{eq:mfg2_modified}, the fixed point solution provides the solution for the MFG-2  system \eqref{eq:loss_pde}. Therefore, we expect solution of \eqref{eq:mfg2_modified} gives good approximation to the optimal solution of the MFG-2 model given a close estimate of the input states $\left\{q_s,\alpha_s\right\}$ (see Proposition~\ref{prop:stability}). If we further assume the solution of \eqref{eq:mfg2} is unique, we can solve the MFG-2 model through an iterative algorithm.

Based on the above consideration, we propose the following iterative strategy aiming to minimize the value function $\mathcal{I}\left(q_s,\alpha_s\right)$ of MFG-2 model.
Let $\{\tilde{q}^{(n+1)}_s,\tilde{\alpha}^{(n+1)}_s\}$ be the solution of \eqref{eq:mfg2_modified} using the input functions $\{q^{(n)}_s,\alpha^{(n)}_s\}$ (from the previous iteration step). First, we construct a new state $q^{\mu}_{s}$ as the linear interpolation of the two functions
\begin{equation}\label{eq:update_q}
q^{\mu}_s=\mu q^{(n)}_s + \left(1-\mu\right)\tilde{q}^{(n+1)}_s.
\end{equation}
with $0\leq \mu\leq1$. Then, the corresponding new interpolation state of $\alpha^{\mu}_s$ is constructed based on the consistency with the continuity equation such that
\[
\partial_{s}q^{\mu}_{s}  +\nabla\cdot\left[\left(\mathcal{T}q^{\mu}_{s}+\alpha^{\mu}_{s}\right)q^{\mu}_{s}\right]=D\Delta q^{\mu}_{s}.
\]
This can be achieved by comparing the equations for $\{ \tilde{q}_s^{(n+1)},\tilde{\alpha}_s^{(n+1)}\}$ and $\{ q_s^{(n)},\alpha_s^{(n)}\}$. The solution can be found to follow the relation
\begin{equation}
\alpha_{s}^{\mu} = \frac{\mu\alpha_s^{(n)}q_s^{(n)} + \left(1-\mu\right)\tilde{\alpha}_s^{(n+1)}\tilde{q}_s^{(n+1)}}{\mu q_s^{(n)} + \left(1-\mu\right)\tilde{q}_s^{(n+1)}} +\left(1-\mu\right)\left(\mathcal{T}q_s^{(n)}-\mathcal{T}\tilde{q}_s^{(n+1)}\right).\label{eq:op_comb}
\end{equation}
Finally,  the updated state is defined by the optimal $\mu$  at the point that minimizes the value of the cost function \eqref{eq:loss_pde} of the MFG-2 model, that is,
\begin{equation}
q^{(n+1)}_s \coloneqq q^{\mu^{*}}_s,\; \alpha^{(n+1)}_s \coloneqq \alpha^{\mu^{*}}_s, \quad \mathrm{with}\:\mu^* = \underset{0\leq\mu\leq 1}{\mathrm{argmin}}\: \mathcal{I}\left(q_{s}^{\mu},\alpha_{s}^{\mu}\right).
\end{equation}
This finishes the one-step updating for the fixed point iteration. We describe the algorithm using the iterative method to find the optimal control solution as follows.

\begin{algorithm}[H]
\begin{algorithmic}[1] 
\Require{set up the initial flow vorticity $q_s^{(0)}$ and control functions $\alpha_s^{(0)}$.}

\For{$n\leq N_{\max}$ while $d\left(q_s^{(n)},q_s^{(n-1)}\right)>\epsilon$ or  $d\left(\alpha_s^{(n)},\alpha_s^{(n-1)}\right)>\epsilon$}
\State{solve the separable equations \eqref{eq:mfg2_modified} to get the new states $\{ \tilde{q}_s^{(n+1)},\tilde{\alpha}_s^{(n+1)}\} $ with input $\{ q_s^{(n)},\alpha_s^{(n)}\}$.}
\State{find the optimal $\mu^*$ that minimizes the cost function $\mathcal{I}\left(q_{s}^{\mu},\alpha_{s}^{\mu}\right)$ by line searching with a sequence of $\mu_{i}=\frac{i}{L},i=1,\cdots,L-1$ using the combination of the two states \eqref{eq:update_q} and \eqref{eq:op_comb}.}
\State{update the next states $q_s^{(n+1)}=q_s^{\mu^*}$ and $\alpha_s^{(n+1)}=\alpha_s^{\mu^*}$.}

\EndFor
\end{algorithmic}
\caption{MFG-2 model for optimal control of the flow field\label{alg:mfg2}}
\end{algorithm}

\subsubsection*{A sufficient condition for the stability of the iterative scheme}
In Algorithm~\ref{alg:mfg2}, we need to take the critical step to update the next stage solution using the optimal linear combination \eqref{eq:update_q} instead of directly taking the solution from the equations \eqref{eq:mfg2_modified}. This is confirmed by the following necessary condition describing the stability of the iteration scheme.
\begin{claim}\label{prop:stability}
With the cost functions defined in \eqref{eq:cost1} or \eqref{eq:cost2}, the value of the target cost function in \eqref{eq:loss_pde} is not guaranteed to decrease  if the solution of the equations \eqref{eq:mfg2_modified} is directly applied to update the next stage. That is, we may have 
\[
\mathcal{I}\left(\tilde{q}_s,\tilde{\alpha}_s\right) > \mathcal{I}\left(q_s,\alpha_s\right),
\]
with $\mu=0$ during some iteration steps.
On the other hand, one can expect to reduce the value of the cost function from the input $\left(q_s,\alpha_s\right)$ in every iteration,
\begin{equation}\label{eq:decay_loss}
\mathcal{I}\left(q^{\mu}_s,\alpha^{\mu}_s\right) \leq \mathcal{I}\left(q_s,\alpha_s\right),
\end{equation}
by taking some $\mu> 0$ in updating the solution in \eqref{eq:update_q}.

\end{claim}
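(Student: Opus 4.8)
The plan is to study the scalar function $g(\mu)\coloneqq\mathcal{I}\bbs{q^{\mu}_{s},\alpha^{\mu}_{s}}$ for $\mu\in[0,1]$, where $(q^{\mu}_{s},\alpha^{\mu}_{s})$ is the interpolated pair \eqref{eq:update_q}--\eqref{eq:op_comb} built from the current iterate $(q_{s},\alpha_{s})\coloneqq(q^{(n)}_{s},\alpha^{(n)}_{s})$ and the optimal output $(\tilde q_{s},\tilde\alpha_{s})\coloneqq(\tilde q^{(n+1)}_{s},\tilde\alpha^{(n+1)}_{s})$ of the modified MFG-1 model \eqref{eq:mfg2_modified}. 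Because \eqref{eq:op_comb} is built so that $q^{\mu}_{s}$ (a convex combination of the nonnegative $q_{s},\tilde q_{s}$, hence nonnegative) solves the MFG-2 continuity equation in \eqref{eq:loss_pde} for every $\mu$, the function $g$ is well defined, with $g(1)=\mathcal{I}(q_{s},\alpha_{s})$ and $g(0)$ the value obtained by directly accepting the solution of \eqref{eq:mfg2_modified} as the next iterate. Inequality \eqref{eq:decay_loss} will follow from $g'(1)\ge 0$ (strict away from a fixed point), so that some $\mu$ slightly below $1$ — hence $\mu>0$ — strictly lowers the cost, while the first (negative) assertion is the statement that $g(0)>g(1)$ can occur; I will obtain both by examining $g'(1)$ and $g(0)-g(1)$ explicitly.

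\emph{Descent.} First I would note that the current iterate is kept, precisely by \eqref{eq:op_comb}, as an admissible pair for the MFG-2 continuity equation with advection $\mathcal{T}q_{s}$, so it is a feasible competitor in the modified MFG-1 problem \eqref{eq:loss_modified} with input $(q_{s},\alpha_{s})$; optimality of $(\tilde q_{s},\tilde\alpha_{s})$ then yields $r\coloneqq\mathcal{J}\bbs{\tilde q,\tilde\alpha;q,\alpha}-\mathcal{J}\bbs{q,\alpha;q,\alpha}\le 0$, which I would rewrite as an explicit integral after expanding $\tilde F(x,q,\varphi)=F(x,q)-\mathcal{T}^{*}\!\cdot(q\alpha)$ and moving the $\mathcal{T}^{*}$-term onto $\delta q_{s}\coloneqq q_{s}-\tilde q_{s}$ via $\int(\mathcal{T}^{*}\!\cdot(q\alpha))\,\delta q\,\ud x=\int q\,\alpha\cdot\mathcal{T}\delta q\,\ud x$. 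Next I differentiate $g$ at $\mu=1$, using the potential-game identities $G=\delta\mathcal{G}/\delta q$, $F=\delta\mathcal{F}/\delta q$, the control cost \eqref{eq:cost_l}, and $\pt_{\mu}q^{\mu}_{s}\big|_{\mu=1}=\delta q_{s}$, $\pt_{\mu}\alpha^{\mu}_{s}\big|_{\mu=1}=\tfrac{\tilde q_{s}}{q_{s}}(\alpha_{s}-\tilde\alpha_{s})-\mathcal{T}\delta q_{s}$ from \eqref{eq:op_comb}. Adding $r$ to the resulting expression for $g'(1)$, the $G$-boundary terms, the advection cross-terms $q\,\alpha\cdot\mathcal{T}\delta q$, and the $F\,\delta q$ terms cancel in pairs, and the remainder collapses to the perfect square
\[
g'(1)+r=\tfrac12\int_{t}^{T}\!\!\int \tilde q_{s}\,\bigl|\alpha_{s}-\tilde\alpha_{s}\bigr|^{2}\,\ud x\,\ud s\ \ge\ 0 .
\]
Since $r\le 0$, this gives $g'(1)\ge\tfrac12\int_{t}^{T}\!\int\tilde q_{s}|\alpha_{s}-\tilde\alpha_{s}|^{2}\,\ud x\,\ud s\ge 0$; and $g'(1)=0$ forces both $r=0$ and $\tilde\alpha_{s}=\alpha_{s}$ on $\{\tilde q_{s}>0\}$, i.e. $(q_{s},\alpha_{s})$ already minimizes \eqref{eq:loss_modified} and is a fixed point of $\mathcal{P}$ (hence, by Theorem~\ref{thm:link}, of the MFG-2 system). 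Away from such a fixed point $g'(1)>0$, so $g(\mu)<g(1)$ for $\mu$ slightly below $1$, which is \eqref{eq:decay_loss}; the line search over $\mu\in[0,1]$ in Algorithm~\ref{alg:mfg2} can only do better.

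\emph{Failure at $\mu=0$.} Here I would compute $g(0)-g(1)$ directly. Formula \eqref{eq:op_comb} at $\mu=0$ gives $q^{0}_{s}=\tilde q_{s}$ but $\alpha^{0}_{s}=\tilde\alpha_{s}+\mathcal{T}q_{s}-\mathcal{T}\tilde q_{s}$, so the kinetic part of $\mathcal{I}$ acquires the extra contribution $\tfrac12\int|\mathcal{T}\delta q_{s}|^{2}\tilde q_{s}\,\ud x$ and associated cross-terms, while the terminal and running costs are now evaluated through the nonlinear $\mathcal{G},\mathcal{F}$ rather than the linearizations $G,F$ inside $\mathcal{J}$; for the cost functionals \eqref{eq:cost1} or \eqref{eq:cost2} this discrepancy is $\bigl(\int G(x,q_{T})\tilde q_{T}\,\ud x-\mathcal{G}(\tilde q_{T})\bigr)+\int_{t}^{T}\bigl(\int F(x,q_{s})\tilde q_{s}\,\ud x-\mathcal{F}(q_{s})-\int(\mathcal{T}^{*}\!\cdot(q_{s}\alpha_{s}))\tilde q_{s}\,\ud x\bigr)\ud s$, which has no definite sign. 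Collecting, $g(0)-g(1)=-r+(\text{these indefinite remainders})$, so an iterate $q_{s}$ far from the MFG-2 fixed point, where $\|\mathcal{T}\delta q_{s}\|$ is large, makes the extra kinetic cost dominate the nonpositive gain $-r$ and forces $g(0)>g(1)$. I would either exhibit such a configuration on the prototype model of Section~\ref{sec:numerics} or simply invoke the numerical experiments there, since the claim is only that monotonicity at $\mu=0$ is not \emph{guaranteed}.

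The delicate point is the descent computation: one has to differentiate the rational control interpolation \eqref{eq:op_comb}, carry out the $\mathcal{T}/\mathcal{T}^{*}$ integrations by parts, and verify that everything outside the square $\tfrac12\int\tilde q_{s}|\alpha_{s}-\tilde\alpha_{s}|^{2}$ cancels exactly against the expanded $\mathcal{J}$-inequality — it is this cancellation that singles out the particular interpolated control in \eqref{eq:op_comb}. A secondary difficulty is that part~(a), being an existence/negative statement, must be supported by an explicit scenario or by the reported numerics rather than by a general argument.
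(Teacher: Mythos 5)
Your proposal is correct, and its descent argument takes a genuinely different route from the paper's. The paper bounds the one-step increment $g(\mu)=\mathcal{I}(q^{\mu}_s,\alpha^{\mu}_s)-\mathcal{I}(q_s,\alpha_s)$ from above via the optimality inequality \eqref{eq:iter1}, splits the bound into Bregman-type nonnegative terms (for the $L_2$ and KL costs) plus the sign-indefinite cross term $\int (q_s\alpha_s)\cdot\mathcal{T}(\tilde q_s-q_s)$, and then argues heuristically that $g$ is, to leading order, an upward parabola in $\mu$ vanishing at $\mu=1$, so some $\mu>0$ gives a negative value unless the minimum sits at $\mu=1$. You instead differentiate along the specific interpolation \eqref{eq:update_q}--\eqref{eq:op_comb} at $\mu=1$ and add the same optimality gap $r\le 0$ (whose validity rests, exactly as in the paper, on the current iterate being feasible for \eqref{eq:loss_modified} because it satisfies the MFG-2 continuity equation); your identity $g'(1)+r=\tfrac12\int_t^T\!\!\int \tilde q_s\,|\alpha_s-\tilde\alpha_s|^2\,\mathrm{d}x\,\mathrm{d}s$ does check out --- with $\partial_\mu\alpha^\mu_s|_{\mu=1}=\tfrac{\tilde q_s}{q_s}(\alpha_s-\tilde\alpha_s)-\mathcal{T}(q_s-\tilde q_s)$, the $G$-terms, the $F$-terms and the $q_s\alpha_s\cdot\mathcal{T}(q_s-\tilde q_s)$ terms cancel in pairs and the remainder is the stated perfect square. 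This buys something the paper's quadratic heuristic leaves implicit: an upward parabola with a root at $\mu=1$ only dips below zero on $[0,1]$ when its other root is $<1$, i.e. when $g'(1)\ge 0$; your computation supplies precisely this missing sign, giving a cleaner first-order proof of \eqref{eq:decay_loss} (strict decrease for $\mu$ slightly below $1$ away from a fixed point of $\mathcal{P}$). The cost is a few regularity/positivity hypotheses ($q_s>0$ so the rational formula \eqref{eq:op_comb} is differentiable at $\mu=1$, $\tilde q_s\ge 0$ for the square to be nonnegative, and the potential structure $F=\delta\mathcal F/\delta q$, $G=\delta\mathcal G/\delta q$), all of which the paper also needs implicitly. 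For the negative half, you and the paper are at the same level: both identify sign-indefinite remainders and defer to numerics or an explicit configuration rather than constructing a counterexample. One small bookkeeping difference: you evaluate the true $\mu=0$ interpolant $(\tilde q_s,\ \tilde\alpha_s+\mathcal{T}(q_s-\tilde q_s))$, faithful to \eqref{eq:op_comb}, whereas the claim's wording and the paper's proof use the direct update $(\tilde q_s,\tilde\alpha_s)$; either reading supports a ``not guaranteed'' statement, but you should state which pair you are plugging into $\mathcal{I}$.
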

\begin{proof}
Denote the value functional to be optimized above as 
\[
\mathcal{J}\left(\tilde{q}_s,\tilde{\alpha}_s;q_s,\alpha_s\right) = \int G(x,q_T) \tilde{q}_{T} \left(x\right)\mathrm{d}x +\int_{t}^{T}\left\{ \int \left[L\left(\tilde{\alpha}_{s}\right) +\tilde{F}\left(x,q_{s},\alpha_s \right)\right]\tilde{q}_{s}\left(x\right)\mathrm{d}x\right\} \mathrm{d}s.
\]
First, since $\left(\tilde{q}_s,\tilde{\alpha}_s\right)$ is the optimal solution of the problem \eqref{eq:loss_modified}, they minimize the corresponding value function
\begin{equation}\label{eq:iter1}
\mathcal{J}\left(\tilde{q}_s,\tilde{\alpha}_s;q_s,\alpha_s\right)\leq \mathcal{J}\left(q_s,\alpha_s;q_s,\alpha_s\right),
\end{equation}
as long as we choose $\left(q_s,\alpha_s\right)$ also satisfying the continuity equation in \eqref{eq:fpe2}. 

On the other hand, the target loss function to minimize has the form
\[
\mathcal{I}\left(q_s,\alpha_s\right) =\mathcal{G}\left(q_{T}\right)+\int_{t}^{T}\left[ \int L\left(\alpha_{s}\right)q_{s}\left(x\right)\mathrm{d}x +\mathcal{F}\left(q_{s}\right)\right]\mathrm{d}s.
\]
To evaluate the effect from one-step update, we define the improvement function
\begin{equation}\label{eq:incre}
\begin{aligned}
g\left(\mu\right) = &\; \mathcal{I}\left(q^{\mu}_s,\alpha^{\mu}_s\right) - \mathcal{I}\left(q_s,\alpha_s\right)\\
\leq & \left[\mathcal{G}\left(q^{\mu}_{T}\right)-\int G(x,q_T) \tilde{q}_{T} \right]+\int_{t}^{T}\left[ \mathcal{F}\left(q^{\mu}_{s}\right)-\int F\left(x,q_{s} \right)\tilde{q}_{s}\right]\mathrm{d}s\\
+&\left[\int G(x,q_T) q_{T} -\mathcal{G}\left(q_{T}\right)\right]+\int_{t}^{T}\left[ \int F\left(x,q_{s} \right)q_{s}-\mathcal{F}\left(q_{s}\right)\right]\mathrm{d}s\\
+&\int \left[L\left(\alpha^{\mu}_{s}\right)q^{\mu}_{s}-L\left(\tilde{\alpha}_{s}\right)\tilde{q}_{s}\right] + \int_{t}^{T}\left[ \int \left(q_s\alpha_s \right)\cdot \mathcal{T}\left(\tilde{q}_s-q_{s}\right)\right]\mathrm{d}s.
\end{aligned}
\end{equation}
Above in the inequality, \eqref{eq:iter1} is used to link the solutions from the modified MFG-1 model \eqref{eq:mfg2_modified} with the updated new state $q^{\mu}_s=\mu q_s + \left(1-\mu\right)\tilde{q}_s$. 

First from \eqref{eq:update_q} and \eqref{eq:op_comb} with $\mu=1$, we have $q^{\mu}_s=q_s,\alpha^{\mu}_s=\alpha_s$, thus
\[g\left(1\right)=\mathcal{I}\left(q_s,\alpha_s\right) - \mathcal{I}\left(q_s,\alpha_s\right)=0.\]
Then at the other end point with $\mu=0$, that is, $q^{\mu}_s=\tilde{q}_s,\alpha^{\mu}_s=\tilde{\alpha}_s$, we can compute the explicit expressions for the right hand side of \eqref{eq:incre} based on the explicit loss functions. Also since the terminal and running costs $\mathcal{G}$ and $\mathcal{F}$ share similar forms in both $L_2$ and KL-divergence cases, we compute the terminal cost below and the running cost will follow with very similar expressions. For the $L_2$ loss \eqref{eq:cost1}, we find
\[
\left[\mathcal{G}_{L_2}\left(\tilde{q}_{T}\right)-\int G_{L_2}(x,q_T) \tilde{q}_{T} \right]+\left[\int G_{L_2}(x,q_T) q_{T} -\mathcal{G}_{L_2}\left(q_{T}\right)\right]
=\frac{1}{2}\int \left(\tilde{q}_T-q_T\right)^2\geq 0,
\]
and for the KL-divergence loss \eqref{eq:cost2}
\[
\left[\mathcal{G}_{KL}\left(\tilde{q}_{T}\right)-\int G_{KL}(x,q_T) \tilde{q}_{T} \right]+\left[\int G_{KL}(x,q_T) q_{T} -\mathcal{G}_{KL}\left(q_{T}\right)\right]
=\int \tilde{q}_T\log\frac{\tilde{q}_T}{q_T}\geq 0.
\]
In the last line of \eqref{eq:incre}, the first term will vanish at $\mu=0$ since $q_s^0=\tilde{q}_s, \alpha_s^0=\tilde{\alpha}_s$. However, the sign of the second term with the integrant, $\left(q_s\alpha_s \right)\cdot \mathcal{T}\left(\tilde{q}_s-q_{s}\right)$, becomes indefinite and could frequently reach positive values during the iterations.
Therefore,  a positive value could be reached on the right hand side of \eqref{eq:incre} using both loss functions. Thus, the total cost $\mathcal{I}$ is not guaranteed to decrease with direct update using a constant $\mu=0$ (see Fig.~\ref{fig:cost} as a confirmation from direct numerical tests).

Finally, notice that using the specific loss functions \eqref{eq:cost1} or \eqref{eq:cost2}, the the function, $g\left(\mu\right) = A\mu^2+B\mu+C+O\left(\Vert \tilde{q}_s-q_{s}\Vert^2\right)$, can be expressed as a quadratic function about $\mu$ in the leading order expansion.  Further, it can be checked the coefficient before the $\mu^2$ term is positive. Thus to ensure there exists some  $\mu^*$ such that $g\left(\mu^{*}\right)<0$, we only need to consider the case with $g\left(0\right)>0$. In addition, taking into account $g\left(1\right)=0$, the property of the quadratic function suggests that  negative values of $g\left(\mu\right)$ will be reached with some $\mu>0$ unless the critical case with minimun reached at $\mu=1$.
\end{proof}

\section{Model performance using prototype test models}\label{sec:numerics}
In this section, we demonstrate the performance of the proposed MFG models through detailed numerical experiments. A prototype nonlinear advection-diffusion model modified from the viscous Burger's equation is used as a test example simulating multiscale vortical flows. This simple model preserves many key properties of the more general turbulent models appearing in many fields, while provides a clean tractable setup for confirming the important basic properties discussed in the previous sections.

\subsection{The modified Burger's equation as a prototype test model}\label{subsec:mvb}

In the numerical tests, we introduce the control problem for the 
modified viscous Burger's (MVB) equation 
\begin{equation}
\partial_{s}q_{s}\left(x\right)+\nabla\cdot\left[u_{s}\left(x\right)q_{s}\left(x\right)\right]=\nu\Delta q_{s}\left(x\right) +f_s\left(x\right).\label{eq:burgers}
\end{equation}
with the control forcing in the form $f_s = -\nabla\cdot\left(\alpha_{s}q_{s}\right)$. The solution is defined on either the one-dimensional periodic domain, $x\in\left[-L,L\right]$, or the two-dimensional doubly periodic domain, $(x,y)\in\left[-L,L\right]\times\left[-L,L\right]$. 
The vorticity field $q_s$ can be projected on the Fourier modes then the advection velocity field $u_s$ can be expressed explicitly under the spectral representation based on each wavenumber $k=\left(k_{x},k_{y}\right)^{\intercal}$ as
\[
u_{s}\left(x\right)=\mathcal{T}q_{s}\left(x\right)=\sum_{\vert k\vert\neq0}i\vert k\vert^{-2}\left(k_{y},-k_{x}\right)^{\intercal}\hat{q}_{k}\left(s\right)e^{ikx},\quad\mathrm{and}\; q_{s}\left(x\right)=\sum_{k}\hat{q}_{k}\left(s\right)e^{ikx}.
\]
In the one-dimensional case, the velocity field reduces to the simpler form $u_{s}\left(x\right)=\sum_{k\neq0}ik^{-1}\hat{q}_{k}\left(s\right)e^{ikx}$.
In fact, it is direct to check that if $u_s\left(x\right)$ satisfies the solution of the viscous Burger's equation, $q_s=-\partial_xu_s$ gives the solution of \eqref{eq:burgers}. Therefore,  a sequence of exact steady state solutions can be constructed for the MVB equation \eqref{eq:burgers} based on the explicit analytic solutions of $u_s$ \cite{boritchev2014decaying} 
\begin{equation}
Q_{\sigma,a}\left(x\right)=2\nu\sigma^{2}\mathrm{sech}^2\left(\sigma\vert x-a\vert)\right),\label{eq:steady_mvb}
\end{equation}
with two parameters $\sigma,a$. The steady solutions indicate the persistent coherent structures in general turbulent flow fields.
Therefore, we design the control problem for recovering the optimal control forcing $f_s$ during $s\in\left[0,T\right]$ driving between two steady solutions
$Q_{i}\left(x\right)=Q_{\sigma_{,}-L/2}\left(x\right)$ and $Q_{f}\left(x\right)=Q_{\sigma,L/2}\left(x\right)$. The cost
functionals in the optimal control are following the two typical examples in \eqref{eq:cost1} and \eqref{eq:cost2}.
It shows that many representative features of general interests including multiscale turbulent behavior and extreme events \cite{abramov2003hamiltonian,majda2000remarkable} can be generated in the simplified MVB model \eqref{eq:burgers}.

In the numerical experiments, the MVB equation is solved by a pseudo-spectral
method with a Galerkin truncated spectral representation of $J=256$ modes in both $x$ and $y$ directions. The finite truncation model is suitable for more general applications with explicit multiscale turbulent structures. The equation is integrated in time
by an explicit-implicit Runge-Kutta method with the implicit part
only for the dissipation term. Model parameters in the numerical tests are listed
below in Table \ref{tab:Parameters}. A typical steady state solution of the MVB equation is plotted in Fig.~\ref{fig:steady} showing a coherent flow structure.
In the following, we first consider the simpler one-dimensional case giving a detailed discussion on both the MFG-1 model \eqref{eq:loss_tracer1}
for the control of the transport of passive tracers, and the performance of the iterative strategy solving the MFG-2
model \eqref{eq:loss_pde} for the control of the flow vorticity equation. Then, the control performance of the control model on the more complex two-dimensional flow equations is tested.

\begin{table}
\centering
\begin{tabular}{cccccccccc}
\toprule 
$\nu$ & $L$ & $J$ & $\Delta t$ & $T$ & & $\gamma$ & $\sigma$ & $a$ & \tabularnewline
\midrule
\midrule 
0.5 & 10 & 256 & $1\times10^{-3}$ & 10 & & 0.2 & 1 & $\pm L/2$& \tabularnewline
\bottomrule
\end{tabular}
\caption{Parameters used for the MVB test model. The last there columns show control parameters for the initial and final target states \eqref{eq:steady_mvb}.\label{tab:Parameters}}
\end{table}
\begin{figure}
\includegraphics[scale=0.35]{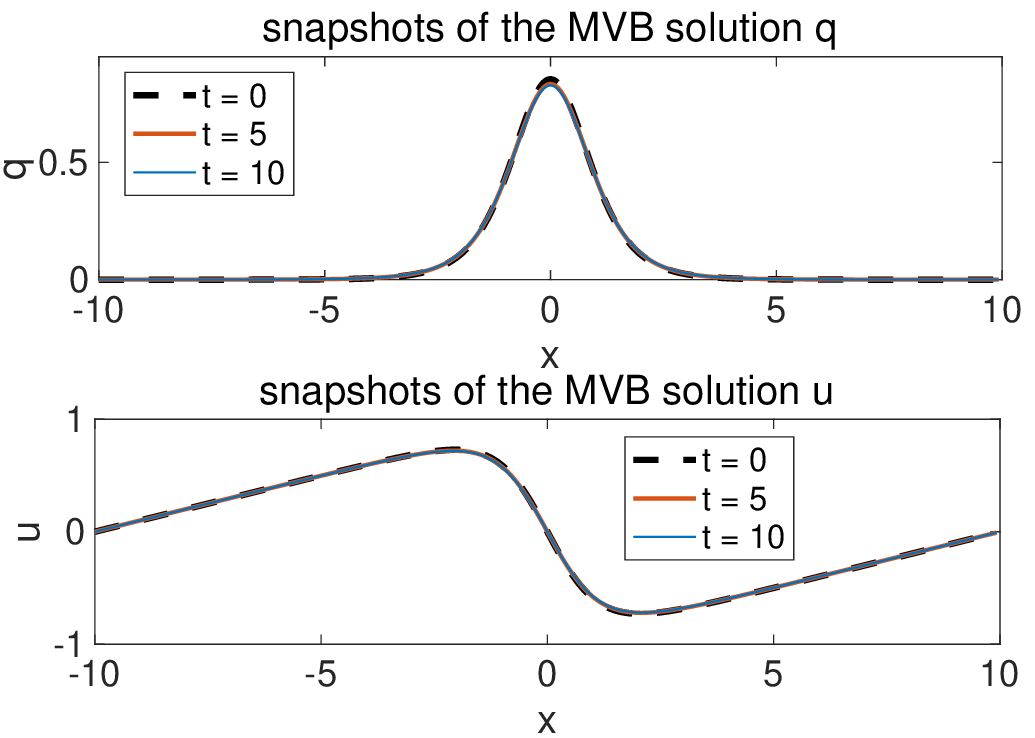}
\includegraphics[scale=0.35]{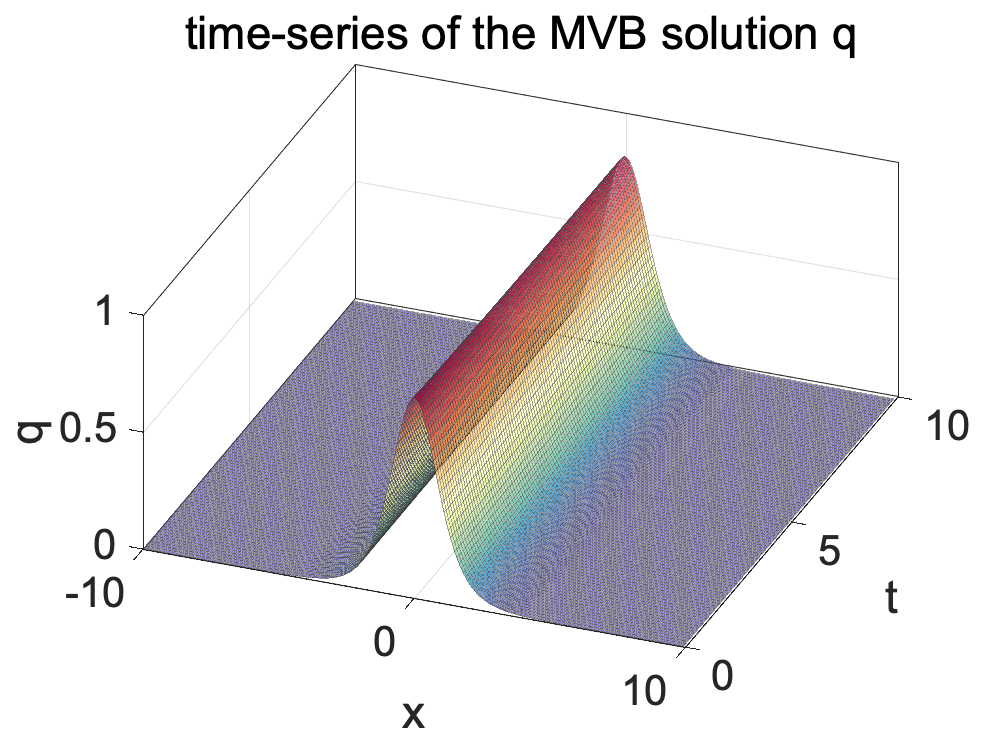}
\includegraphics[scale=0.35]{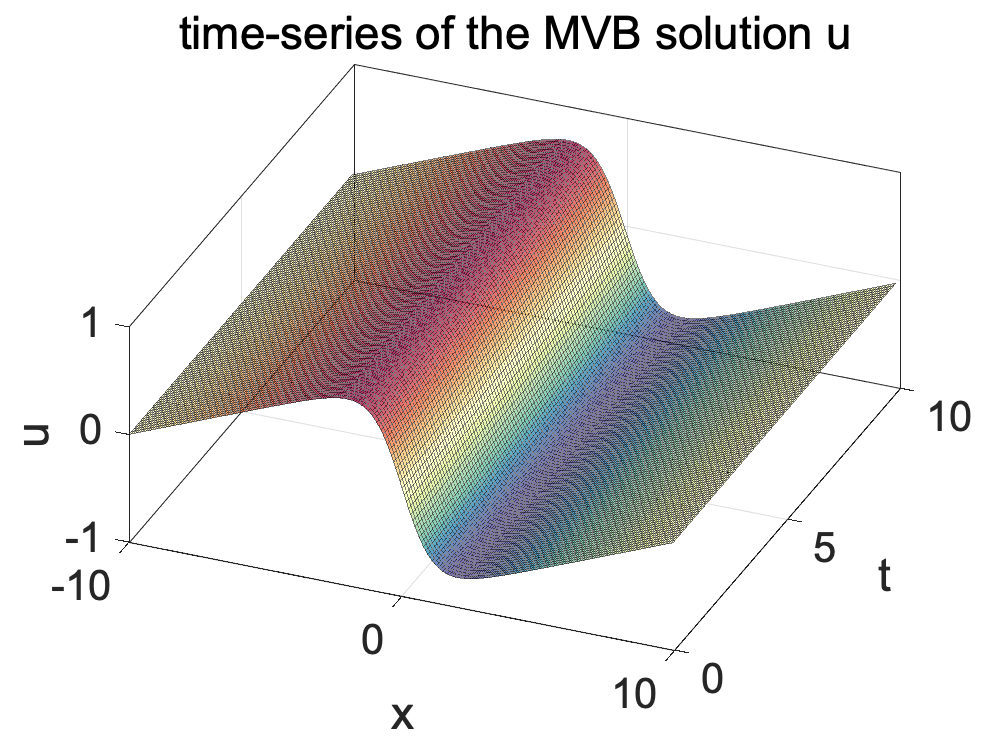}

\caption{An illustration of the steady solution of the MVB equation with $q_t=-\partial_x u_t$.}\label{fig:steady}

\end{figure}

\subsection{MFG-1 model for tracer field control}

First, we show the performance of the tracer control model in the one-dimensional case according to Algorithm~\ref{alg:mfg1} and \ref{alg:mfg1_sde}. 
The advection flow solution $u_t$ is generated by the steady state solution of \eqref{eq:burgers} with $f\equiv 0$ as shown in Fig.~\ref{fig:steady}. We set the tracer initial density as $Q_{i}\left(x\right)=Q_{1,-L/2}\left(x\right)$ and the final target density as $Q_{f}\left(x\right)=Q_{1,L/2}\left(x\right)$.
Therefore, the tracer control problem asks the optimal control action that drives the tracers across the `barrier' set by the advection flow in the center of the domain. Both the $L_2$ cost \eqref{eq:cost1} and the KL-divergence cost \eqref{eq:cost2} are applied in optimizing the cost function \eqref{eq:loss_tracer1}.

\subsubsection{Control of tracer density function with the PDE model}
The forward continuity equation and backward HJE are decoupled in the MFG-1 model case. Following Algorithm \ref{alg:mfg1}, we first solve the control action 
$\alpha_{s}=\partial_{x}\phi_{s}$ by solving the backward equation \eqref{eq:hje1}
using the prescribed solution of $q_{s}$, 
with $F$ and $G$ from the corresponding costs \eqref{eq:cost1} or \eqref{eq:cost2}. Next, we solve the forward equation \eqref{eq:fpe1} to get the controlled tracer density
field using the achieved optimal control solution $\phi_s$.
The solution gives the optimal controlled tracer density field $\rho_{s}$
together with the optimal control forcing $\alpha_{s}$ exerted on top of
the transporting flow velocity $u_{s}=\mathcal{T}q_s$.

The optimal solutions for the MFG-1 model achieved with both $L_2$ and KL-divergence cost functions are plotted in Fig.~\ref{fig:ctrl_tracer1}. Comparable control solutions are found under the different forms of cost functions. In this control problem for tracer density,  the initial tracer field concentrates on the left side of the domain at $-L/2$ while the target field lies on the right at $L/2$. With the steady advection flow field $u_s$ as shown in Fig.~\ref{fig:steady}, the passive tracers are driven toward the center of the domain at $x=0$. The control forcing is required to guide the tracers going against the tendency from the advection flow velocity. As a result, the controlled tracer field $\rho_s$ diverged into two routes in opposite directions, one travels across the center region of the domain and the other goes across the boundary using the periodic boundary condition, converged at the final target location.  A strong control forcing $\alpha_s$ is exerted at the starting time to drive the tracer density quickly toward the target, then is reduced to smaller values to balance the running cost part of the control. The tracer density fields finally reach the target state $\rho_T$ with good agreements, indicating successful control performance in both cases under different losses.

\begin{figure}
\subfloat[$L_2$ loss]{
\includegraphics[scale=0.35]{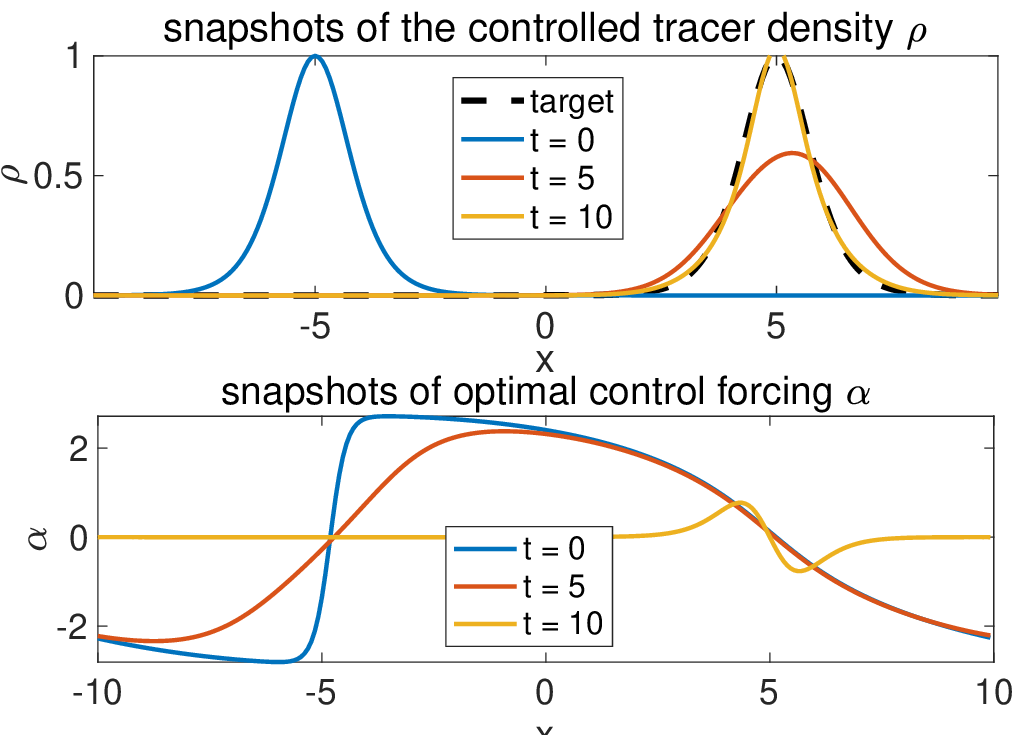}\includegraphics[scale=0.35]{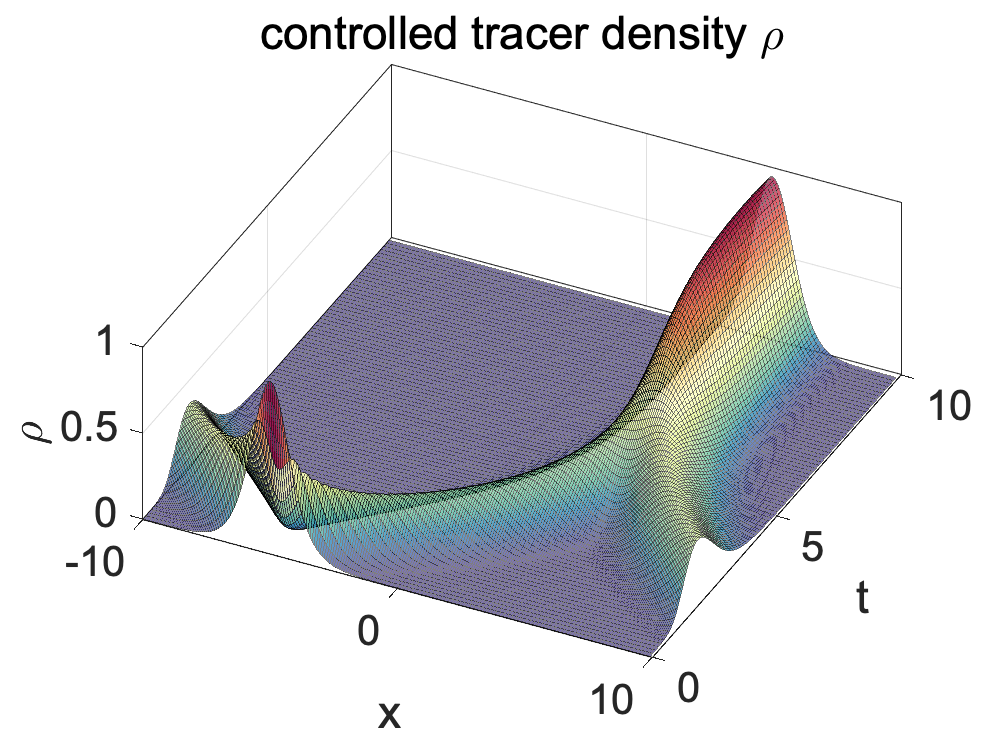}
\includegraphics[scale=0.35]{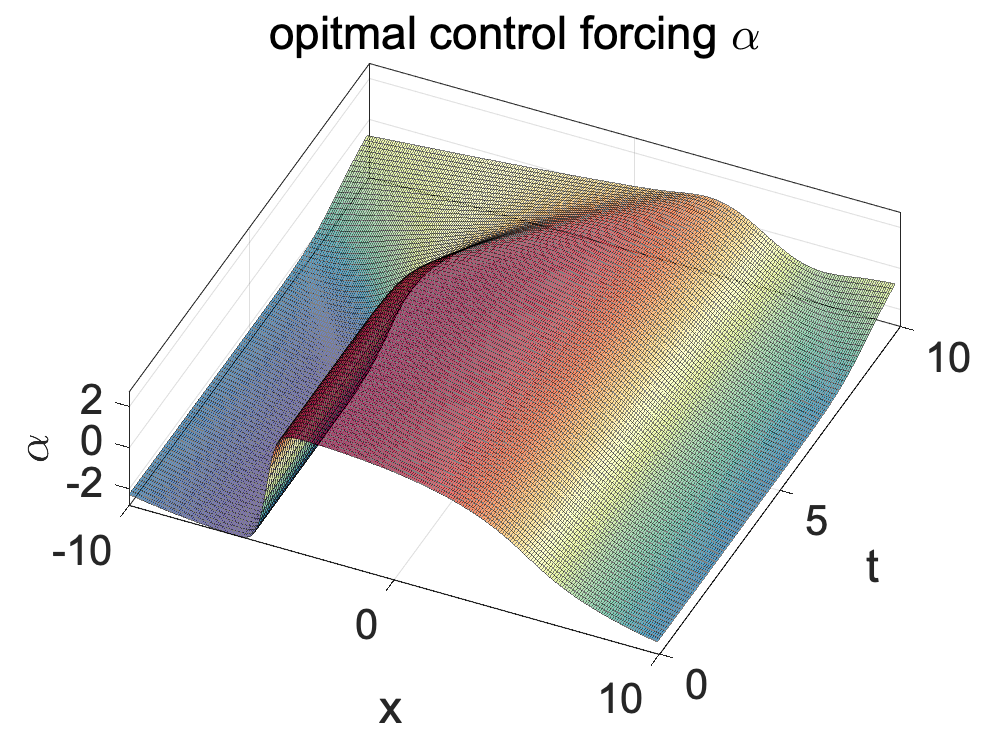}
}

\subfloat[KL-divergence loss]{
\includegraphics[scale=0.35]{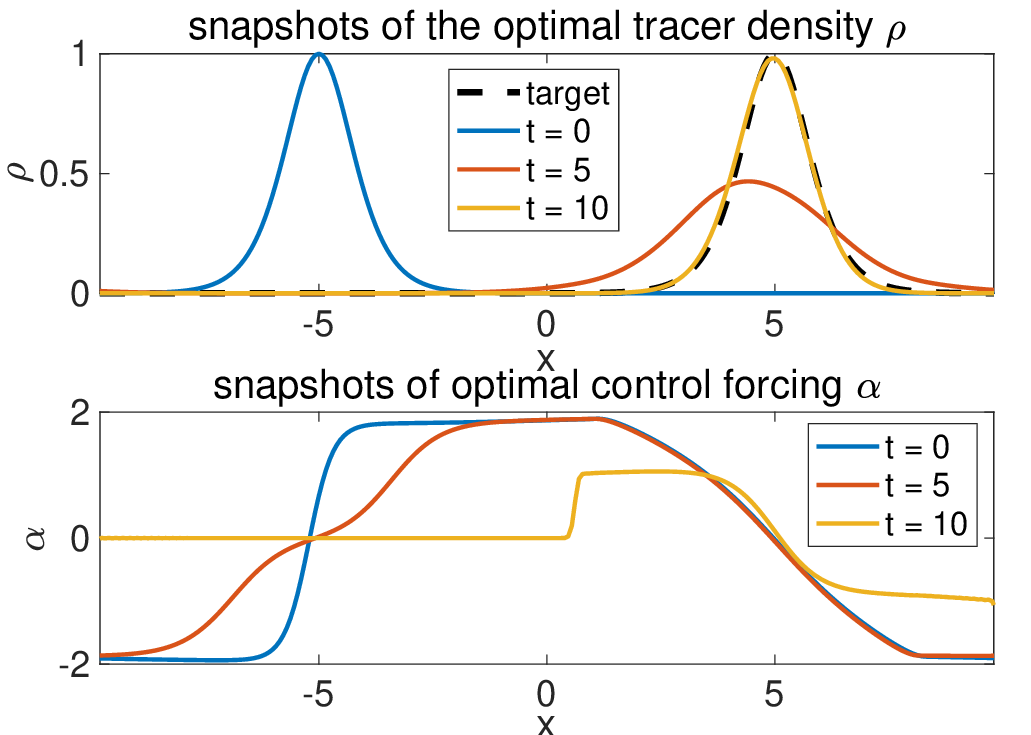}\includegraphics[scale=0.35]{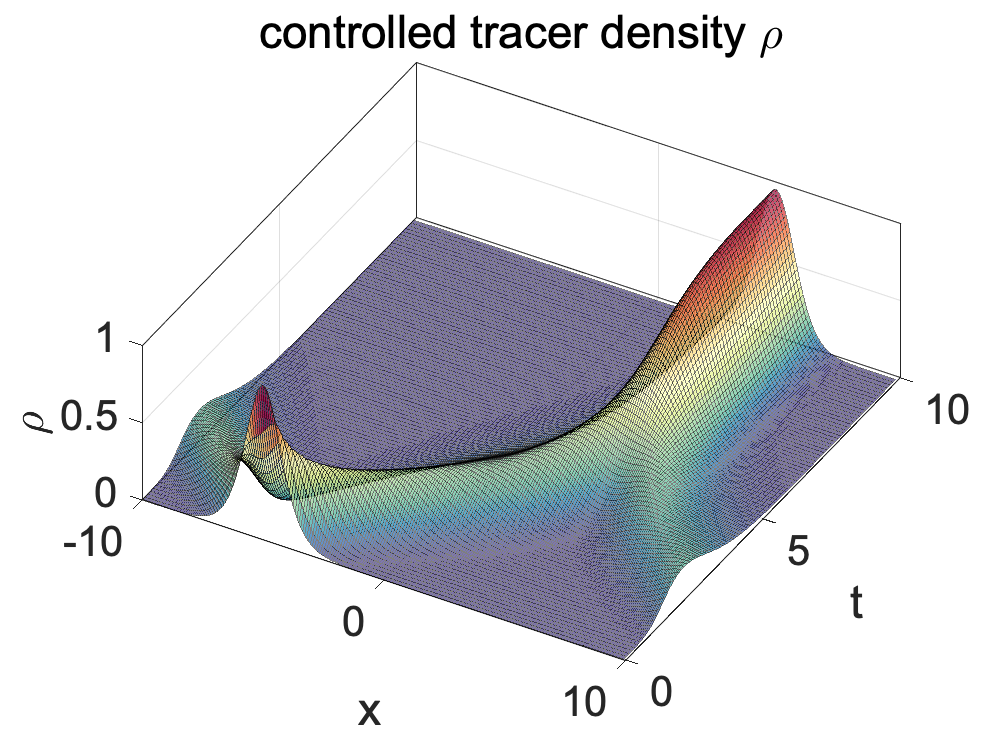}\includegraphics[scale=0.35]{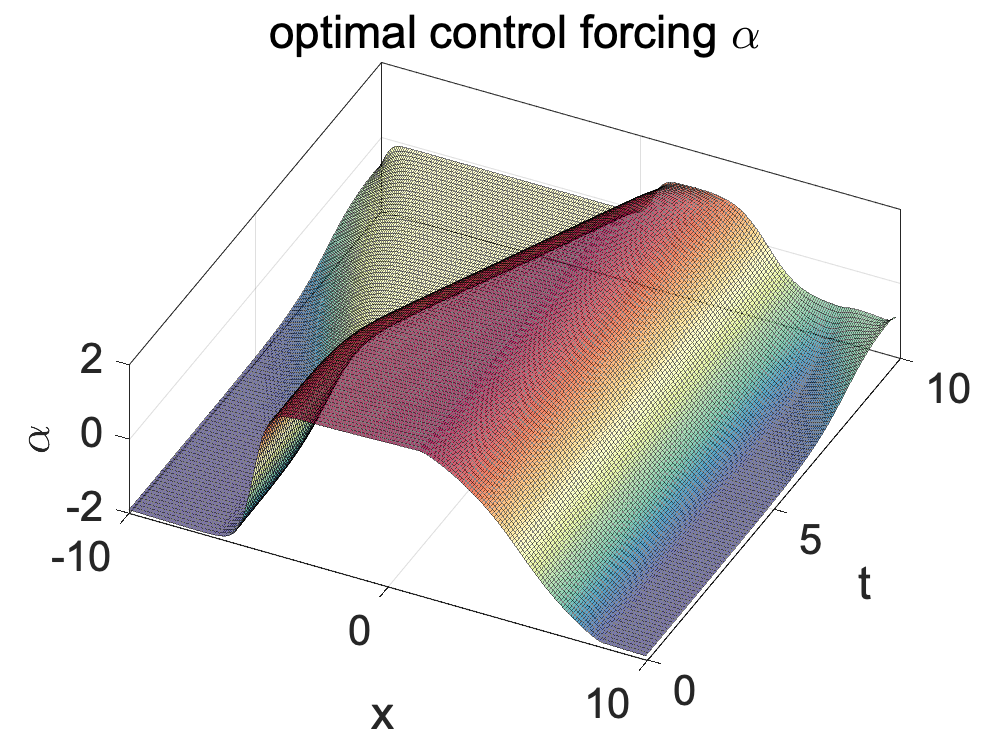}
}

\caption{Optimal controlled solutions of MFG-1 model for tracer transport with different loss functions.}\label{fig:ctrl_tracer1}
\end{figure}

\subsubsection{Control of ensemble empirical distributions with the SDE model}\label{subsec:sampling}
Corresponding, we can use the ensemble SDE approach for solving the controlled optimal tracer density through estimating the empirical measure from the samples. Especially, Algorithm~\ref{alg:mfg1_sde} for the tracer control model provides an effective way to produce samples agreeing any non-Gaussian PDFs by setting it as the targeting state $X^i_T\sim Q_f$ for the terminal tracer density. Here, the initial MC samples of the tracers  $X_0\sim \rho_0=\mathcal{N}\left(\mu_0,\sigma_0^2\right)$ can be easily sampled from a normal distribution. The optimal control forcing $\alpha_s$ is still solved from the backward HJE \eqref{eq:hje1} with the terminal condition $G$ defined by the target state $Q_f$. But instead, the forward continuity equation is solved by the MC approximation of the SDE \eqref{eq:loss_sde_tracer1}.
Then, the resulting ensemble members through the controlled SDE model sample the target PDF, $\frac{1}{N} \sum_{i}\delta_{X^{i}_T}\simeq Q_f$. This method will become more useful for sampling high dimensional PDFs including highly non-Gaussian structures.

In Fig.~\ref{fig:tracer_sampling}, we show one simple test to sample non-Gaussian PDFs in the shape of the functions \eqref{eq:steady_mvb}. Using the SDE model with $N=10^4$ samples, the initial samples are drawn from a standard normal distribution. Two target PDFs with linear tails of different extent $\sigma=0.5$, $\sigma=1$ are used. It shows that the final empirical sample distributions of the tracer particles accurately capture the non-Gaussian shapes in the PDFs, and are also agree with the PDE control model results. Deviation only appears in the long tail region due to insufficient representation of the extreme events. The ensemble representations will also applied next using the SDE model for controlling flow vorticity states. 
\begin{figure}

\includegraphics[scale=0.32]{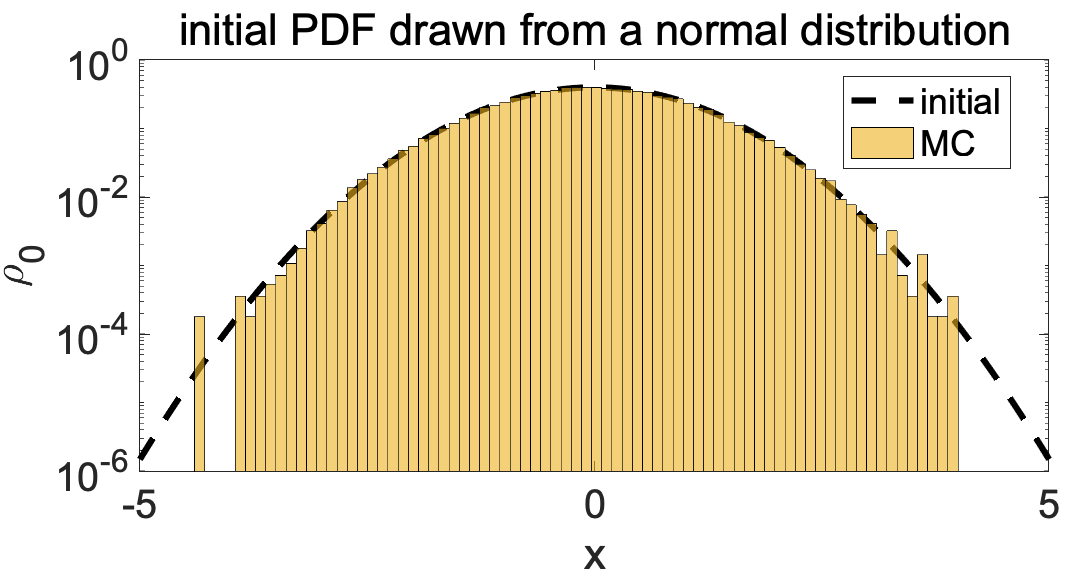}\includegraphics[scale=0.32]{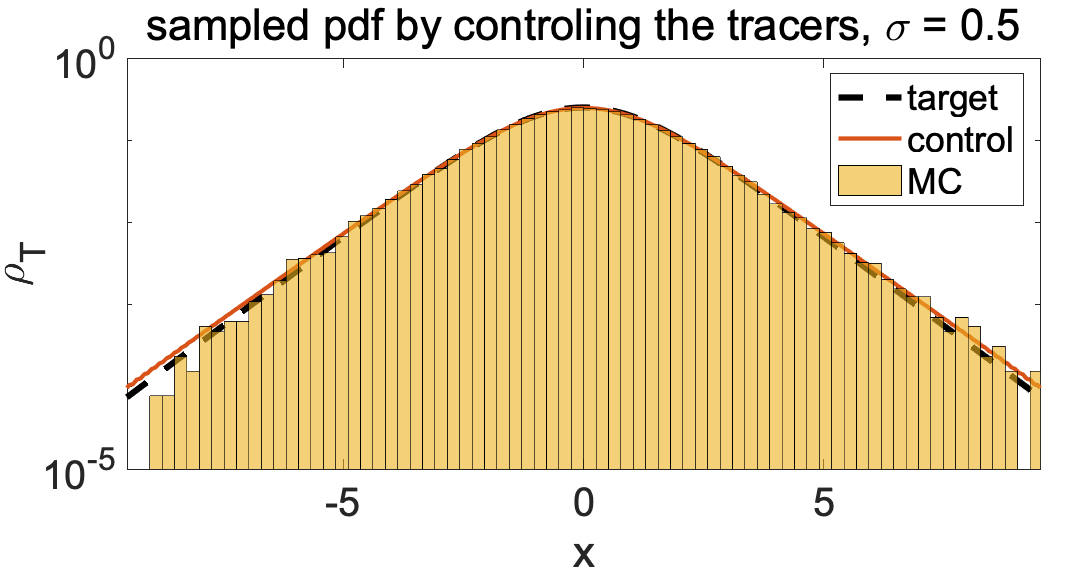}\includegraphics[scale=0.32]{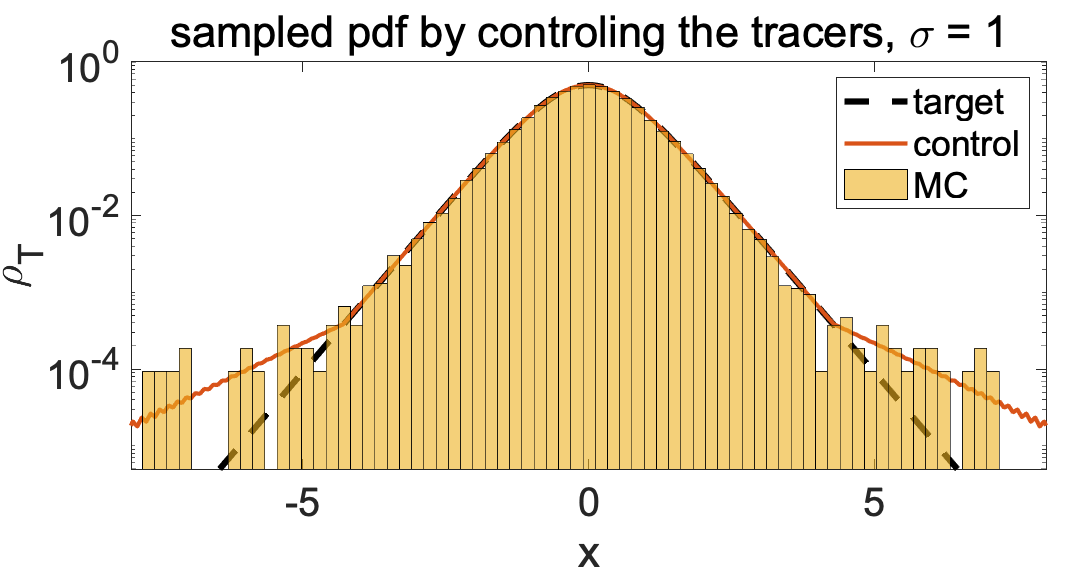}

\caption{Sampling non-Gaussian PDFs by controlling the tracer density in the SDE model.}\label{fig:tracer_sampling}
\end{figure}

\subsection{MFG-2 model for flow field control}

Next, we test the one-dimensional flow control problem formulated by the MFG-2 model. In this case, we need to solve the coupled joint equations \eqref{eq:mfg2}. 
Different from the previous tracer control problem, the continuity equation for $q_s$ becomes nonlinear due to the explicit coupling with the advection velocity $u_s=\mathcal{T}q_s$. The forward and backward equations become closely coupled. The new iterative approach 
in Algorithm \ref{alg:mfg2} is applied to solve the corresponding decoupled solution $\lbrace\tilde{q}_s,\tilde{\varphi}_s\rbrace$ from \eqref{eq:mfg2_modified} using  the solution  $\lbrace q^{(n)}_s,\varphi^{(n)}_s\rbrace$ from the previous iteration step
to find the converged optimal 
solution.

\subsubsection{Effective updating strategy during iterations}
We first confirm the necessary condition in Proposition~\ref{prop:stability} for converging iterations.  The new state $q^{\mu}=\mu q^n+\left(1-\mu\right)\tilde{q}$ is required to be updated using the optimal combination parameter $\mu\neq 0$. In Fig.~\ref{fig:cost}, we plot the improvement in minimizing the value function $\mathcal{I}\left(q^{\mu},\alpha^{\mu}\right)-\mathcal{I}\left(q^{(n)},\alpha^{(n)}\right)$ under the $L_2$ and KL-divergence cost during the first two iteration steps. Consistent with our analysis, the cost $\mathcal{I}\left(\tilde{q}_s,\tilde{\alpha}_s\right)$ as a function of $\mu$ gives approximately a quadratic structure with 0 at the right end $\mu=1$ and indefinite values on the left $\mu=0$. 
Notice that the cost function value by directly using the decoupled model solution $\lbrace\tilde{q}_s,\tilde{\varphi}_s\rbrace$ is indicated at $\mu=0$. The direct solution is not guaranteed to reduce the target cost function from each step of iteration. This is the inherent obstacle due to the instability in the iteration scheme especially during the initial iteration steps with larger errors. On the other hand, an improvement with negative values can be always achieved through a direct line searching of the optimal $\mu$ to minimize the cost. In addition, the choice of the optimal $\mu$ can also effectively expedite the convergence requiring only a few iteration steps.

As a more detailed illustration of the error development during iterations, Fig.~\ref{fig:iterations} plots the value function $\mathcal{I}(q_{s}^{\left(n\right)},\alpha_{s}^{\left(n\right)})$ as well as the $L_2$ errors in both states $q^{(n)}_s,\alpha^{(n)}_s$ measured during the entire control time window $s\in\left[0,T\right]$.
It shows that the value function converges quickly close to its minimum value just with a few iterations ($\sim 5$ under both $L_2$ and KL-divergence loss). In comparison, we also plot the evolution of the loss function values using a fixed $\mu=0.5$. Then, the value function cannot be minimized due to the frequent violation of the decreasing condition. This confirms the necessity of taking the adaptive choice of the combination parameter $\mu$ to reach the target fixed-point solution in the proposed iterative algorithm. We continue running the iterations further to the longer than necessary steps. It shows that the errors in the target states keep decreasing to the refined optimal solution during the iterations. This confirms the stability of the proposed method.

\begin{figure}
\centering
\includegraphics[scale=0.36]{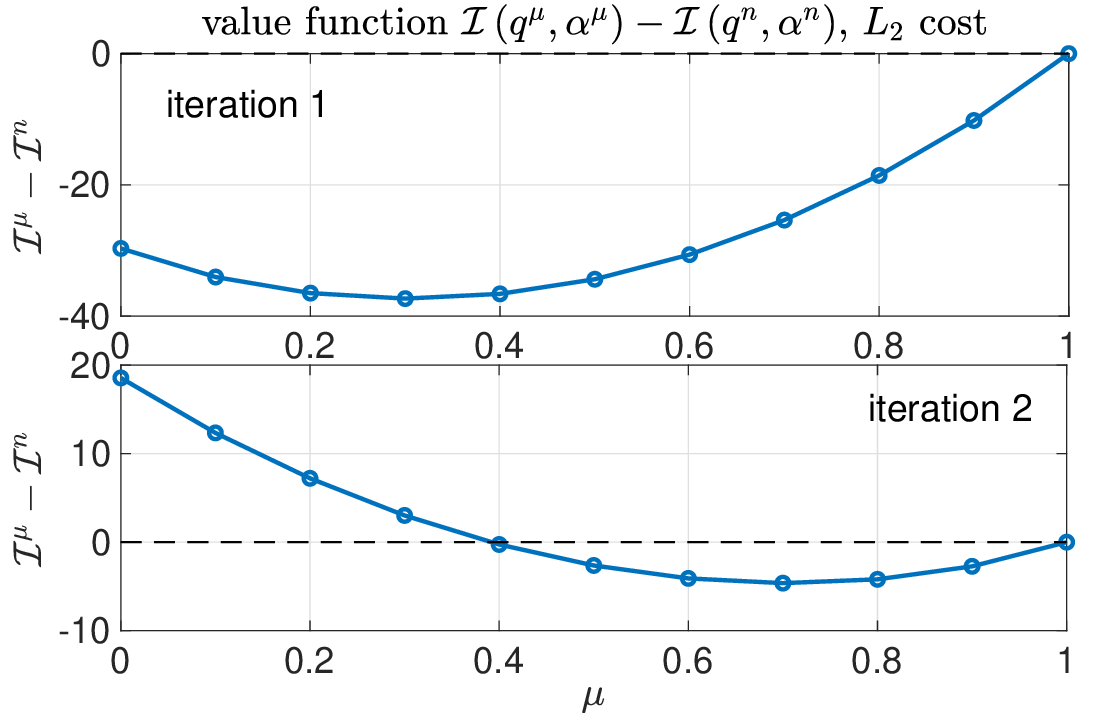}\includegraphics[scale=0.36]{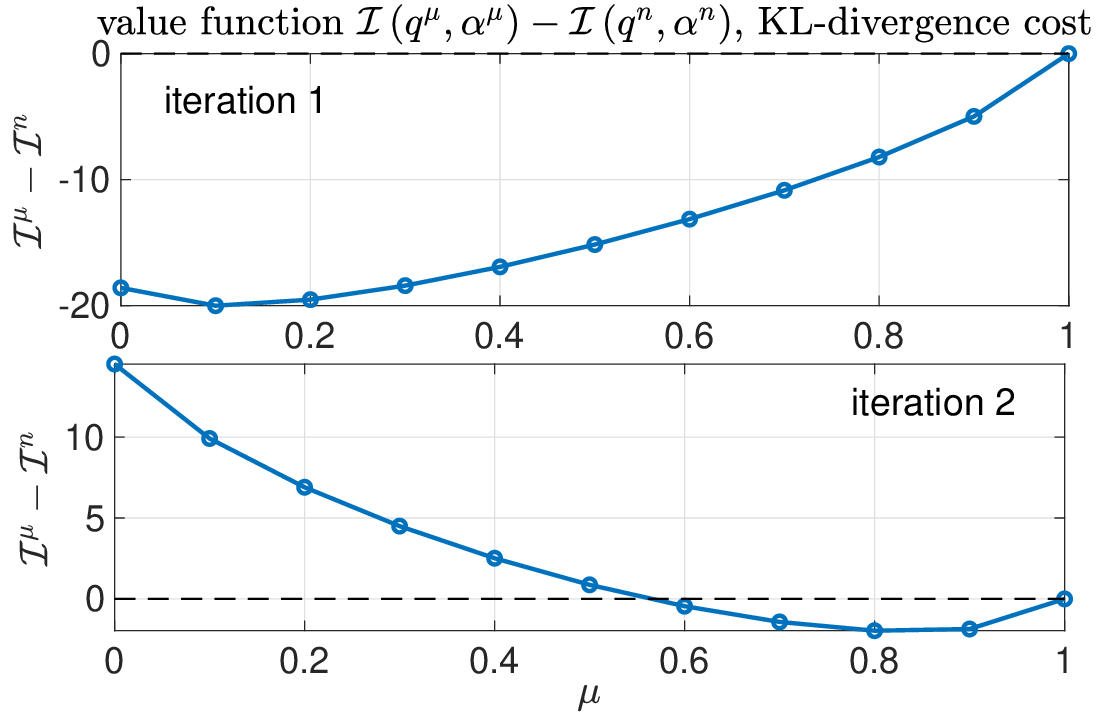}
\caption{Improvement in the target value function $\mathcal{I}\left(q^{\mu}_s,\alpha^{\mu}_s\right)-\mathcal{I}\left(q^{n}_s,\alpha^{n}_s\right)$ with different values of $\mu$ during the first two iterations using $L_2$ and KL-divergence cost.}\label{fig:cost}
\end{figure}
\begin{figure}
\centering
\subfloat{\includegraphics[scale=0.35]{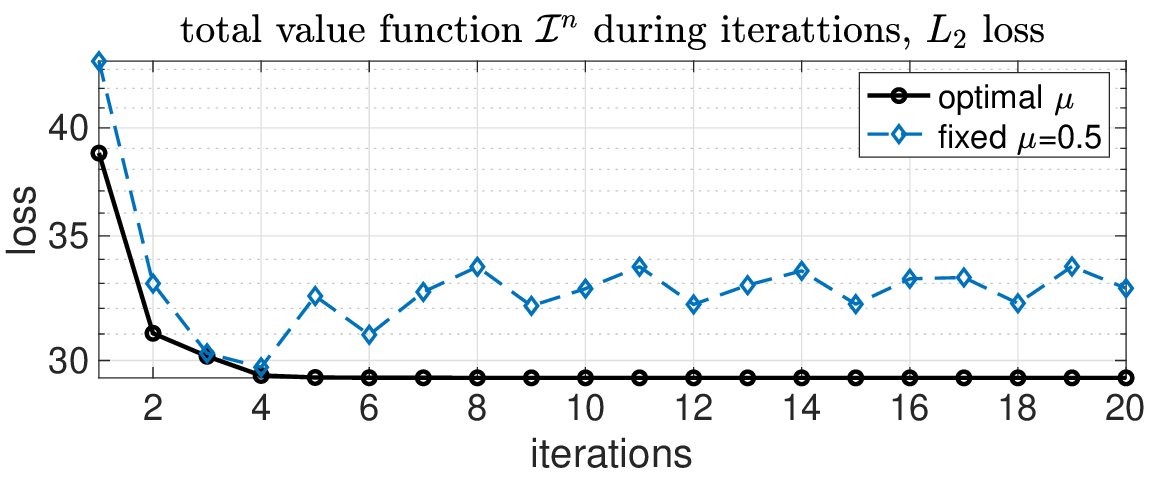}\includegraphics[scale=0.35]{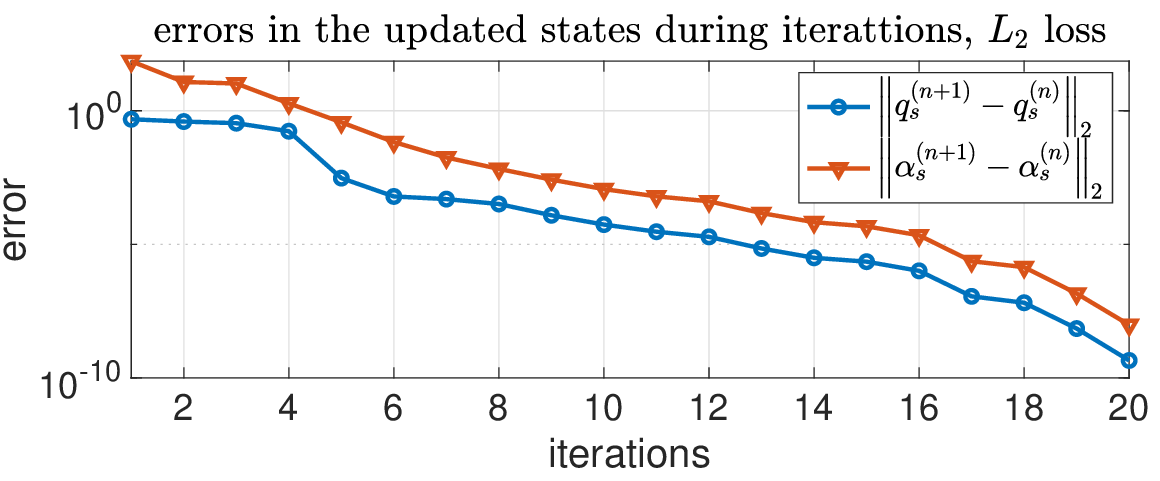} }
\vspace{-1em}
\subfloat{\includegraphics[scale=0.35]{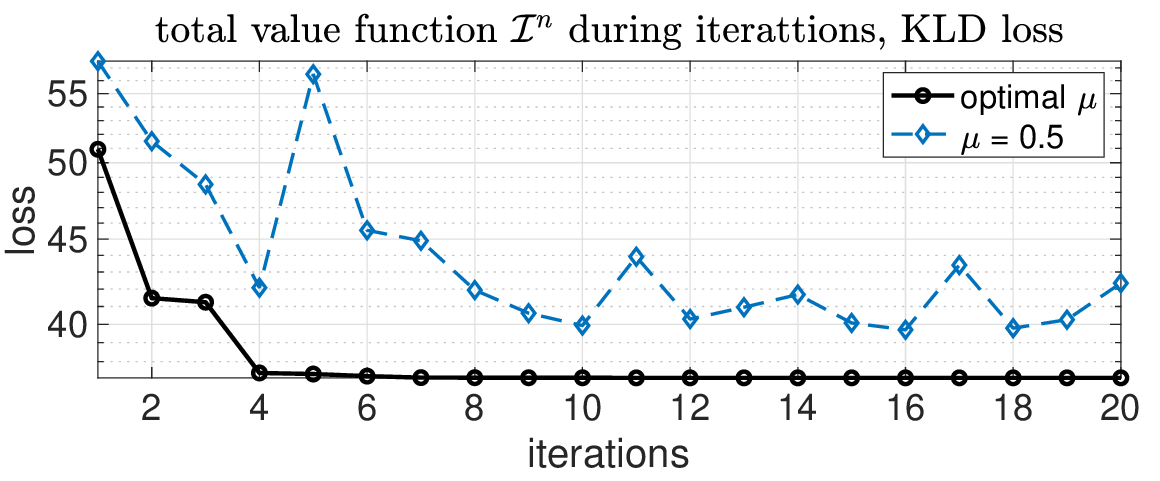}\includegraphics[scale=0.35]{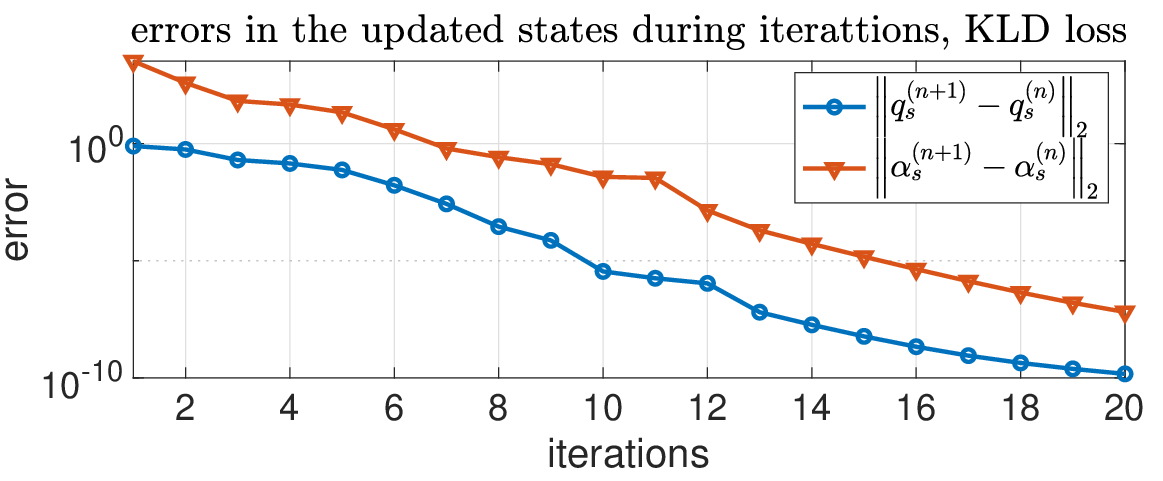} }

\caption{Total value function $\mathcal{I}^{n}=\mathcal{I}(q_{s}^{\left(n\right)},\alpha_{s}^{\left(n\right)})$
and the errors in the updated states $\lbrace q_{s}^{\left(n\right)},\alpha_{s}^{\left(n\right)}\rbrace $
during the updating iterations using both $L_{2}$ and KLD loss.}\label{fig:iterations}
\end{figure}

\subsubsection{Control performance with different cost functions}
Here, we show the optimally controlled solutions achieved from the iterative algorithm. The initial states for $\lbrace q_s^{(0)},\varphi_s^{(0)}\rbrace$ in the iterative algorithm is computed by solving the tracer control problem. The final converged optimal solutions using both $L_2$ and KL-divergence loss functions are displayed in Fig.~\ref{fig:control_flow1}. The controlled trajectories for $q_s$ show similar structure as in the tracer control case, while they are through the completely different dynamical equations. This can be seen by the very different structures in the optimal control solutions of $\alpha_s$. In fact, the solution illustrates the route of transition between the two steady states of the flow solutions.
\begin{figure}
\subfloat[$L_2$ loss]{
\includegraphics[scale=0.35]{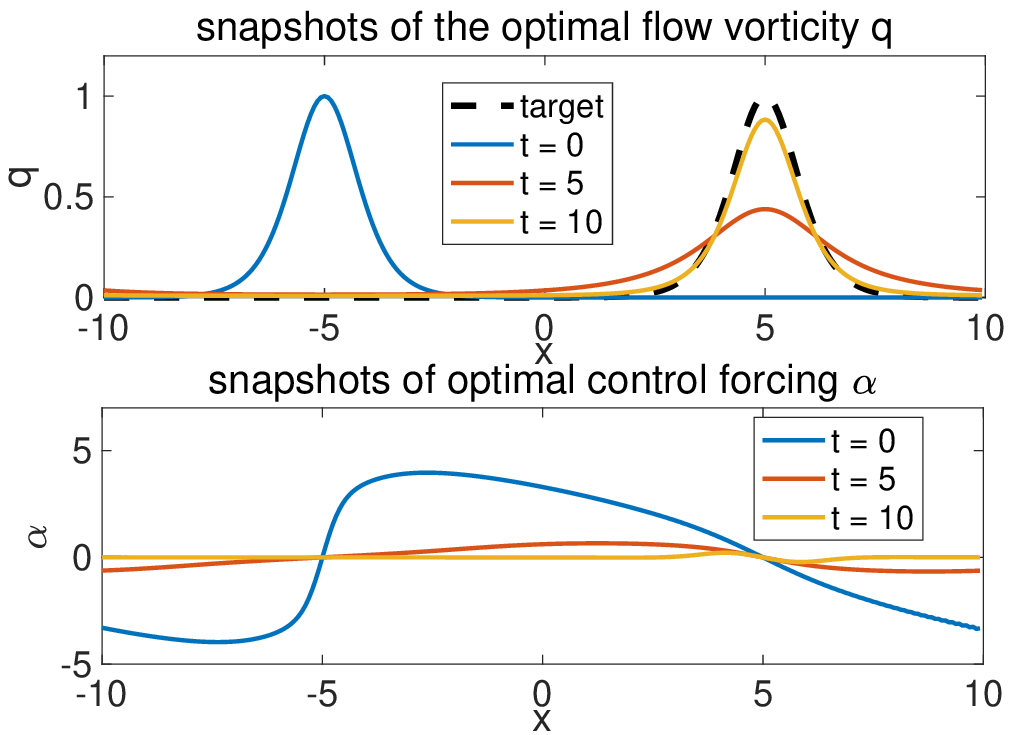}\includegraphics[scale=0.35]{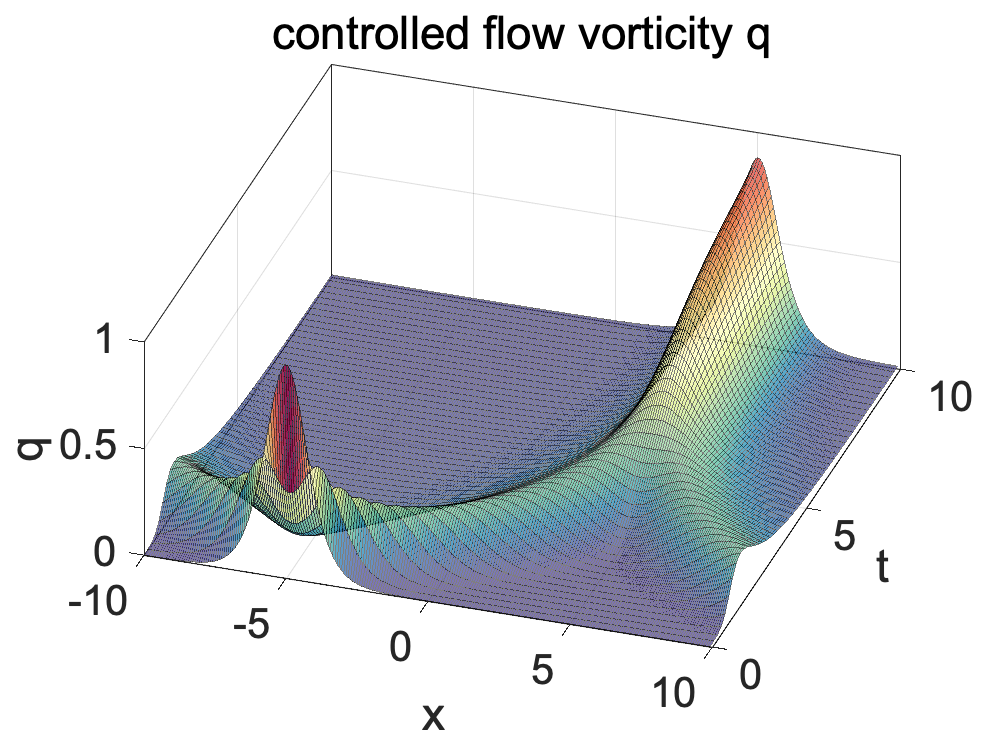}\includegraphics[scale=0.35]{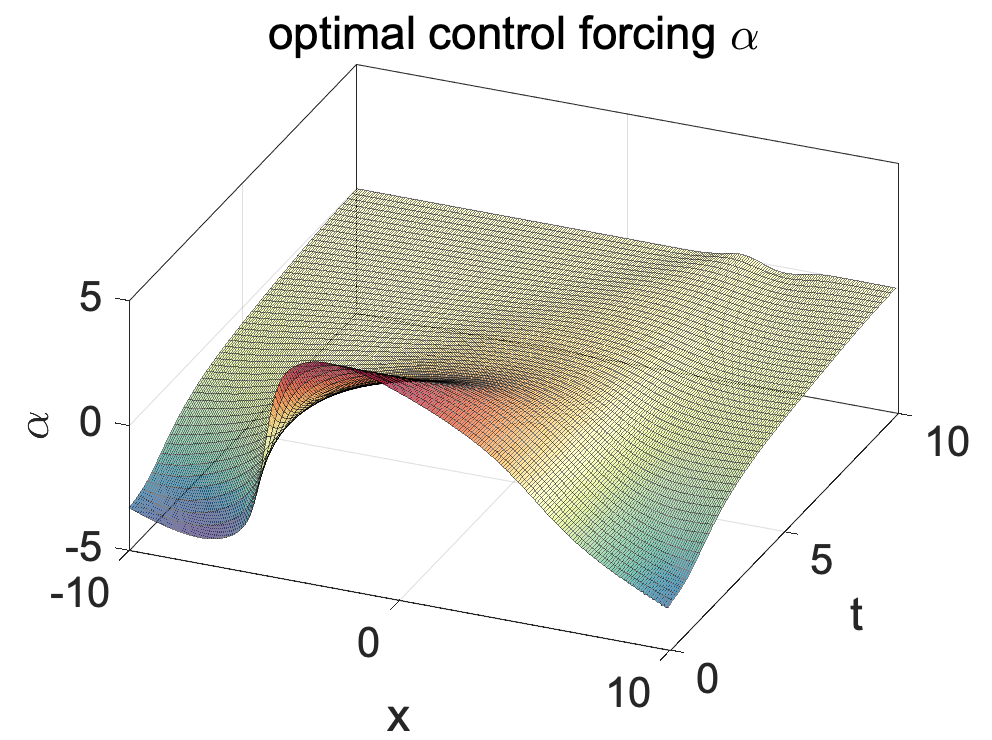}
}

\subfloat[KL-divergence loss]{
\includegraphics[scale=0.35]{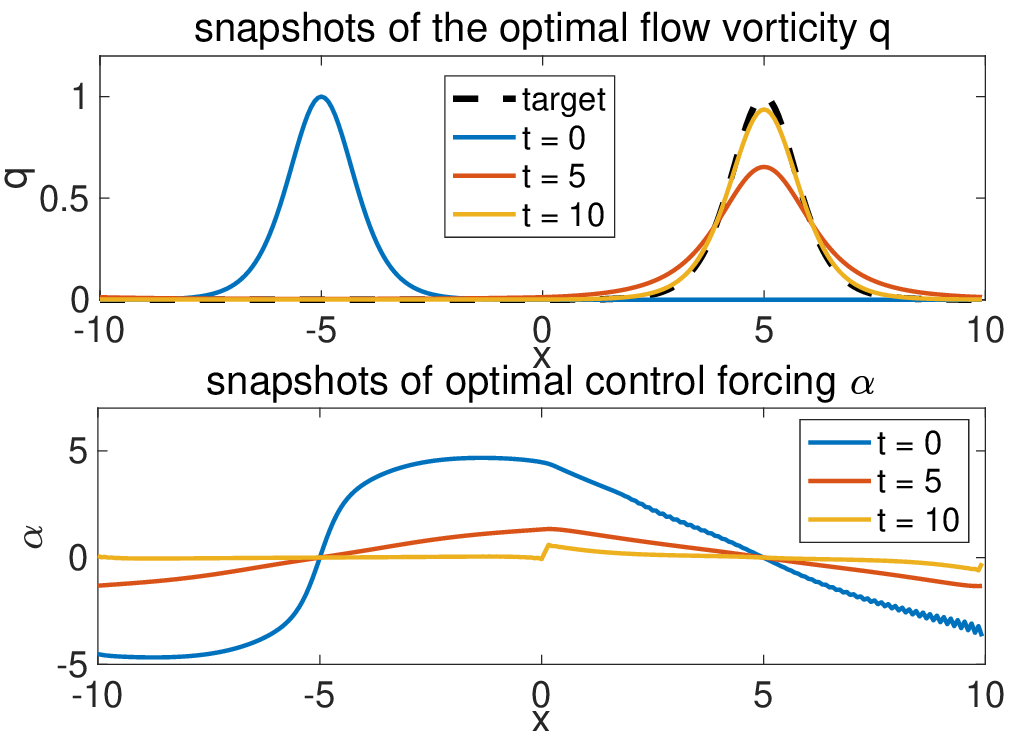}\includegraphics[scale=0.35]{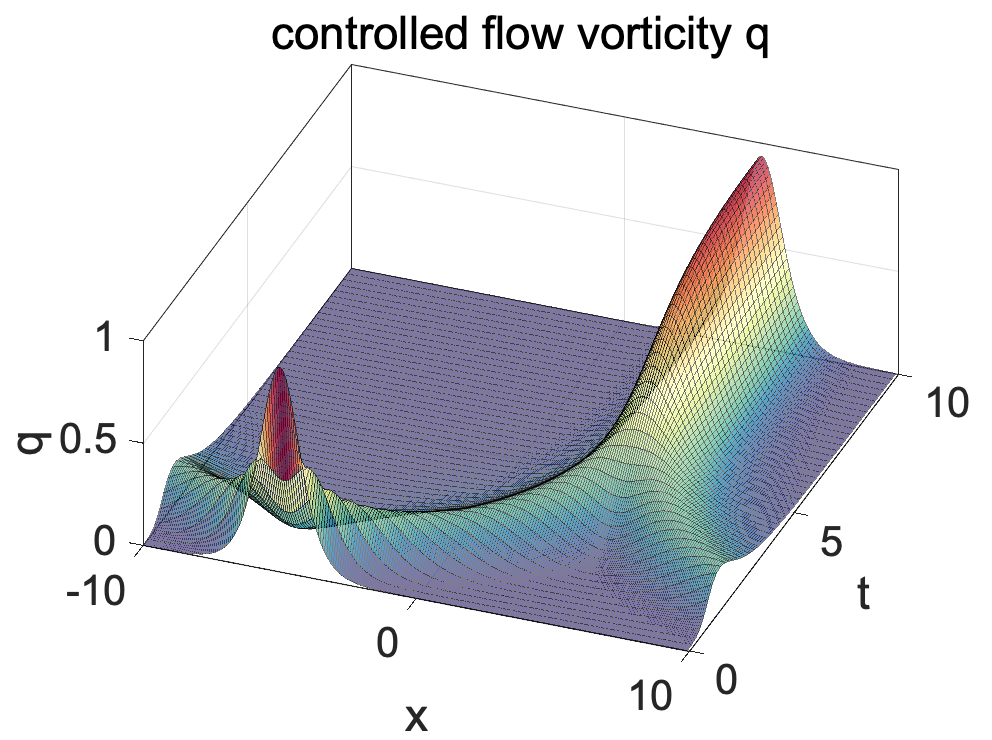}\includegraphics[scale=0.35]{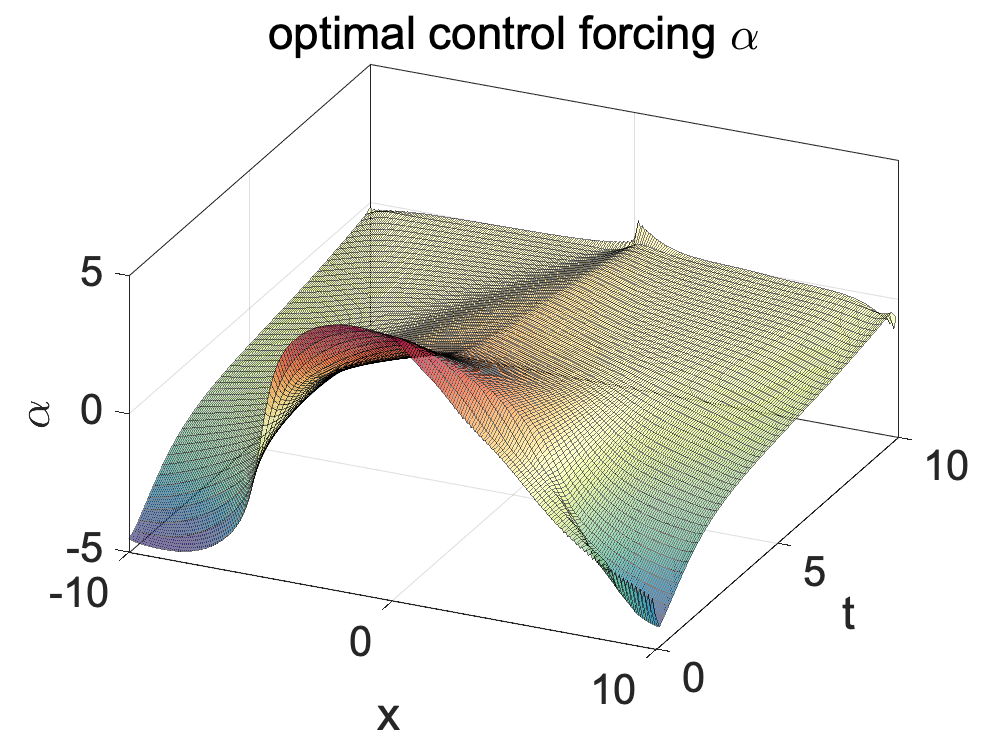}
}

\caption{Optimal controlled solution of MFG-2 model for vorticity transport with different loss functions.}\label{fig:control_flow1}
\end{figure}

Equivalently, we can solve the flow control problem using the SDE formulation \eqref{eq:loss_sde} with finite number of samples as described in the last formulation in Sec.~\ref{sec:control_models}. In this approach, instead of solving the continuous forward equation for $q_s$ we run an ensemble simulation for the Lagrangian tracers
\[
\mathrm{d}X_{s}^{i}  =\left[\mathcal{T}q_{s}^{N}\left(X_{s}^{i}\right)+\alpha_{s}^{N}\left(X_{s}^{i}\right)\right]\mathrm{d}s+\sqrt{2D}\mathrm{d}W_{s}^{i},
\]
with $i=1,\cdots,N$. Notice that the $N$ samples are linked together by the empirical recovery of the flow field from the samples $q_{s}^{N}\left(x\right)=\frac{1}{N}\sum_{i=1}^{N}\delta_{X_{s}^{i}}\left(x\right)$. The optimal control $\alpha^N_s$ is solved through the backward HJE also using the empirical estimate $q^N_s$. The initial samples agreeing with the initial state $Q_i$ are drawn by the strategy introduced in Sec.~\ref{subsec:sampling}. This idea is very useful for the generalization to higher dimensional cases where directly solving the forward Fokker-Planck PDE becomes highly expensive.
Fig.~\ref{fig:ctrl_mc} illustrates the controlled solutions using the KL-divergence loss at several time instants from the SDE model using $N=1\times10^4$ samples. The corresponding optimal PDE solution is Fig.~\ref{fig:control_flow1} is compared on top of the SDE solution from the empirical distribution. Good agreements are observed in the two equivalent approaches indicating effective skill in the control methods.

\begin{figure}
\centering
\includegraphics[scale=0.3]{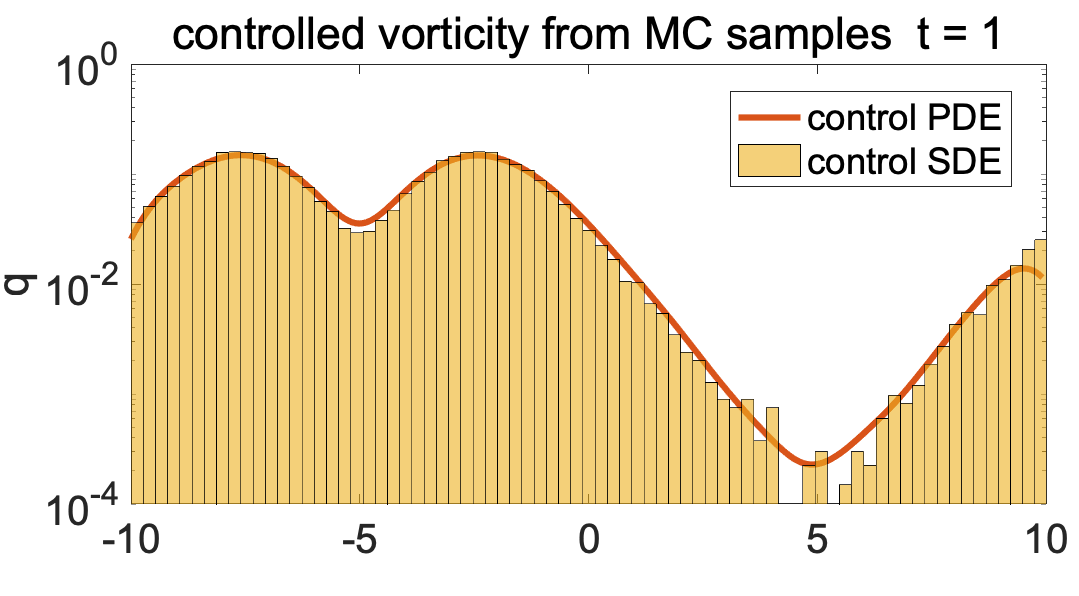}\includegraphics[scale=0.3]{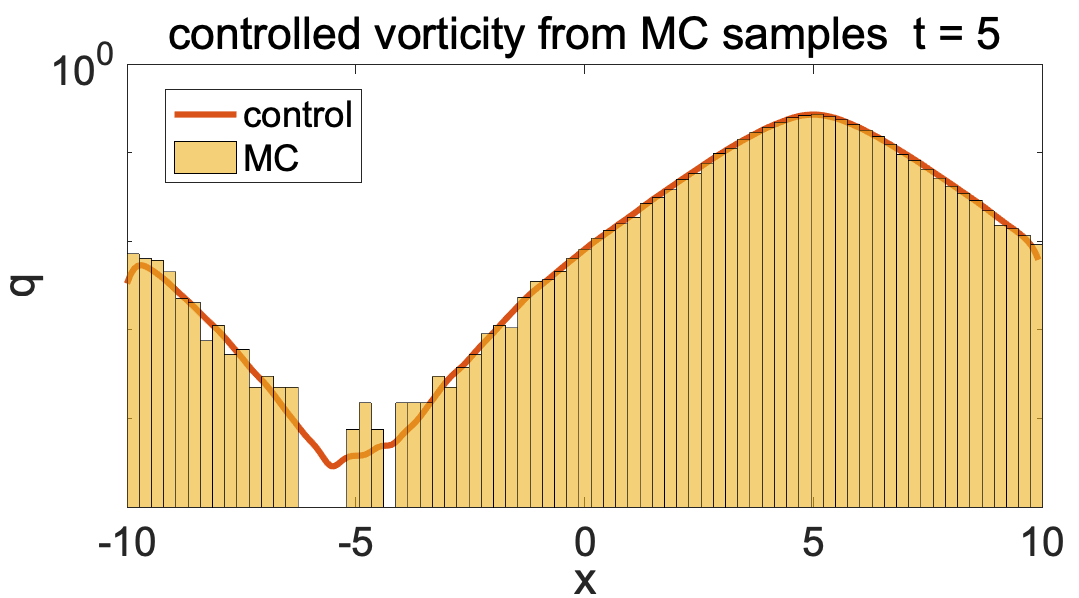}\includegraphics[scale=0.3]{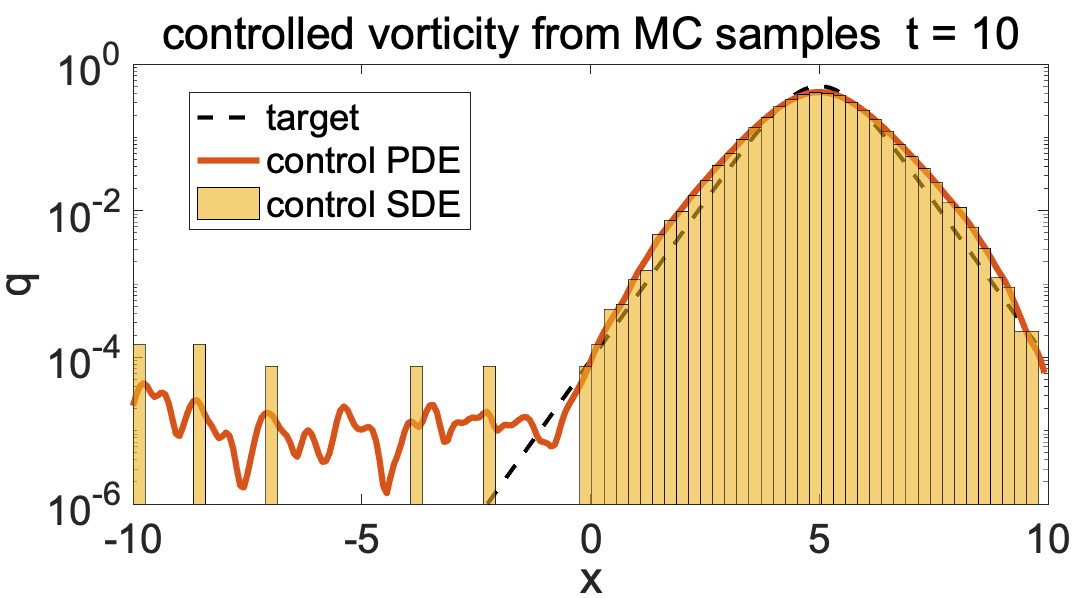}

\caption{Evolution of the controlled vorticity state $q_t$ from the SDE model using the KL-divergence loss function.}\label{fig:ctrl_mc}
\end{figure}

At last, we compare the optimal value function $\mathcal{V}\left(q,t\right)$ from different initial state $q=Q_{\sigma,-L/2}$ and starting time $t$. We fix the terminal time as $T=10$ and the shapes in the initial state is determined by the parameter $\sigma$. The results are shown in Fig.~\ref{fig:control_comp}. With $t=0$, the control has the longest time window $s\in\left[0,T\right]$. Thus the optimal solution is reached early and remain in the final target state to minimize the cost. When the starting time $t$ increases, a shorter time window $s\in\left[t,T\right]$ is allowed. Thus the control forcing needs to drive the solution to the final state in a faster rate. Overall, the optimal solutions go through a relatively similar transient stage.
\begin{figure}
\centering
\subfloat{
\includegraphics[scale=0.33]{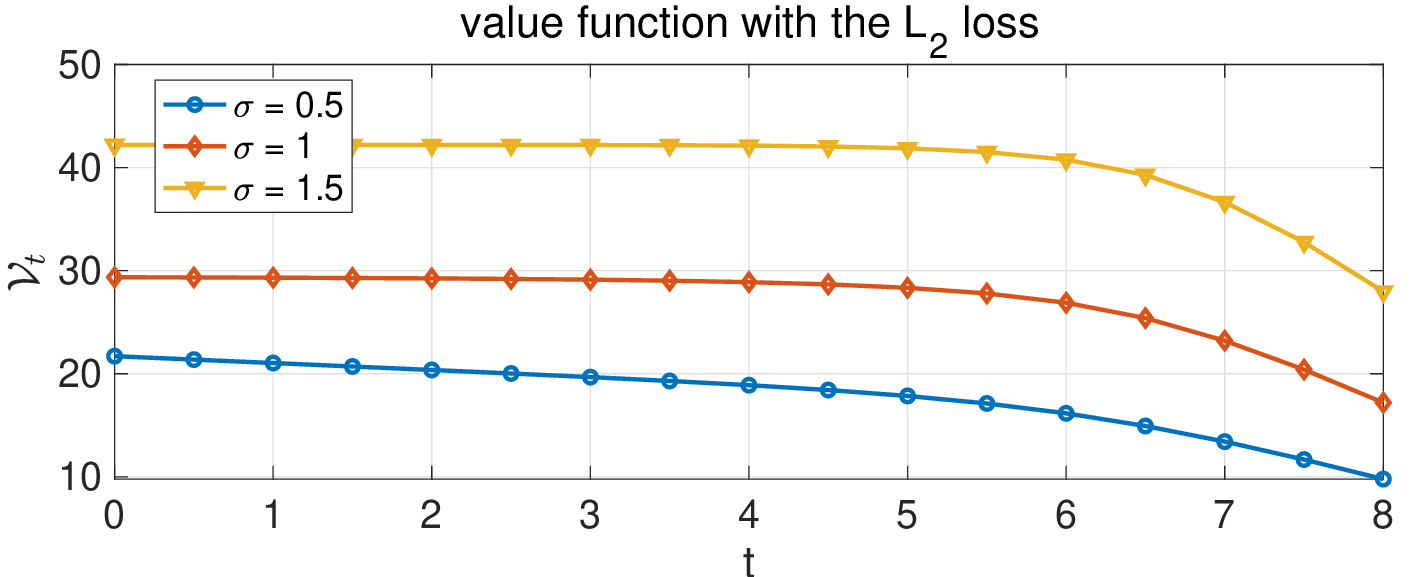}\includegraphics[scale=0.33]{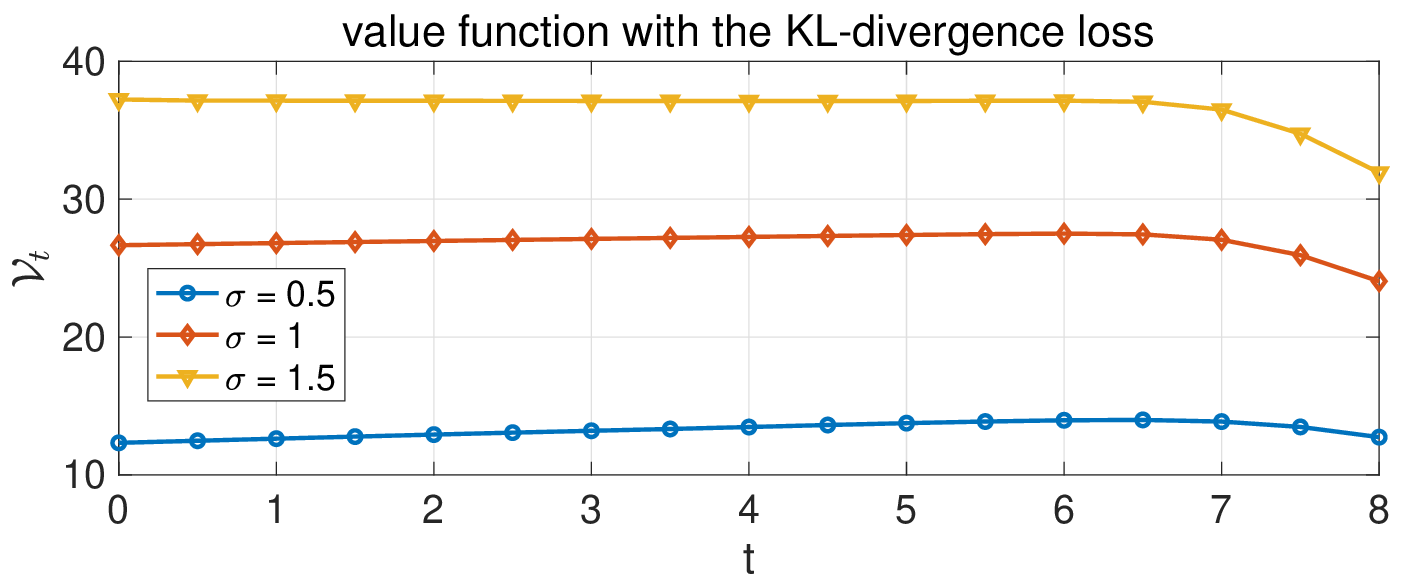}}
\vspace{-1em}
\subfloat{
\includegraphics[scale=0.34]{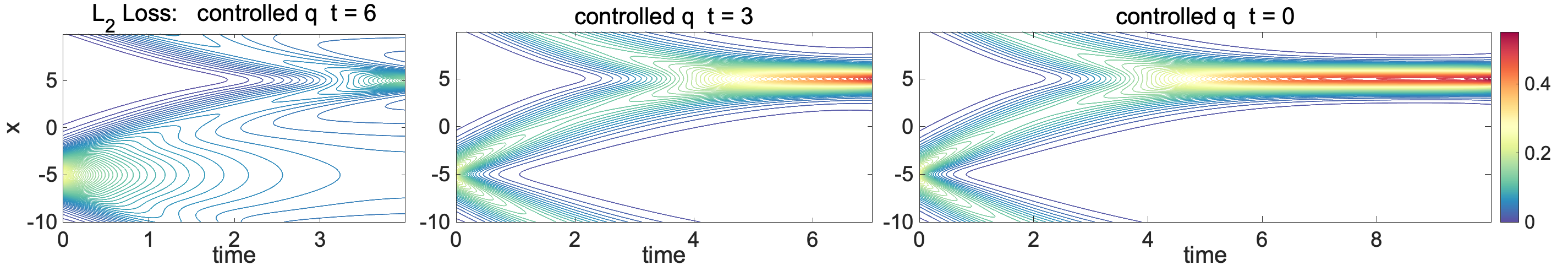}}
\vspace{-1em}
\subfloat{
\includegraphics[scale=0.34]{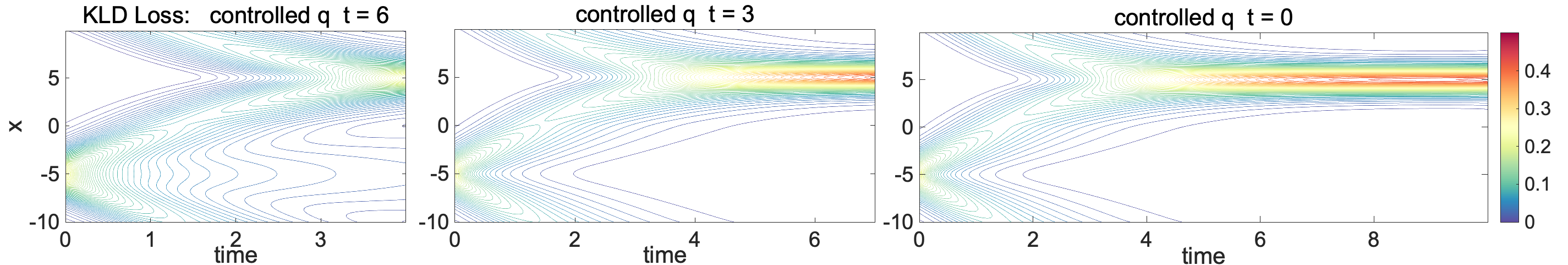}}

\caption{Comparison of the optimized value function $\mathcal{V}\left(q,t\right)$ with different starting time $t$ to the final time $T=10$. Different initial distributions $q=Q_{\sigma,-L/2}$ are also compared. The last two rows show the optimal controlled solutions $q_s$ with $\sigma=0.5$ from different starting times $t$.}\label{fig:control_comp}
\end{figure}

\subsection{MFG models for controlling two-dimensional flows}
Finally, we test the performance of the MFG models on controlling the 2-dimensional flow fields. The same algorithm is applied on solving the corresponding two-dimensional MVB equations in the same fashion. In this case, the equation becomes the diffusive transport of the vorticity field $q_s$.
The same steady solutions as in \eqref{eq:steady_mvb} are taken as the initial and final target states centered at $\left(-L/2,-L/2\right)$ and $\left(L/2,L/2\right)$. The same set of parameters is applied in the two-dimensional flow control case, and the same loss functions \eqref{eq:cost1} and \eqref{eq:cost2} are used measuring the errors in the two-dimensional functions.
Following the same strategy as in the one-dimensional tests, we first apply Algorithm~\ref{alg:mfg1} to get the initial guess $q_s^{(0)},\varphi_s^{(0)}$. Then the optimal control solution is achieved by the iterative strategy in Algorithm~\ref{alg:mfg2}. 

The the value function as well as the errors during each iteration are plotted in Fig.~\ref{fig:iterations_2d}. Still, we run a larger number of iterations to illustrate the evolution of the errors. The same as the one-dimensional case, the loss quickly converges to the minimum value after only a few iterations under both cost functions. The fast convergence is especially important in the two-dimensional case due to the much higher computational cost. The errors in the states $q_s^{(n)}$ and $\varphi_s^{(n)}$ also quickly drop to small values implying fast convergence and keep decreasing with more iterations just refining the detailed structures. 
The optimal trajectories for the optimal solution $q_s$  are shown in Fig.~\ref{fig:control_q_2d}. It is observed the recovered control forcing successfully moving the flow vorticity from the initial state to its final target. One interesting observation in the two-dimensional case is that the controlled flow solution demonstrates different routes to the target under the $L_2$ and KL-divergence loss functions. Under the $L_2$ loss, the solution goes through a gradual transition with decaying value in the initial state. On the other hand, under the KL-divergence loss, the initial density is moved directly to the final target along the four symmetric directions with the doubly periodic boundary.
Further future investigation is needed for a complete understanding of the distinctive behaviors under different loss functions.

\begin{figure}
\centering
\subfloat{\includegraphics[scale=0.35]{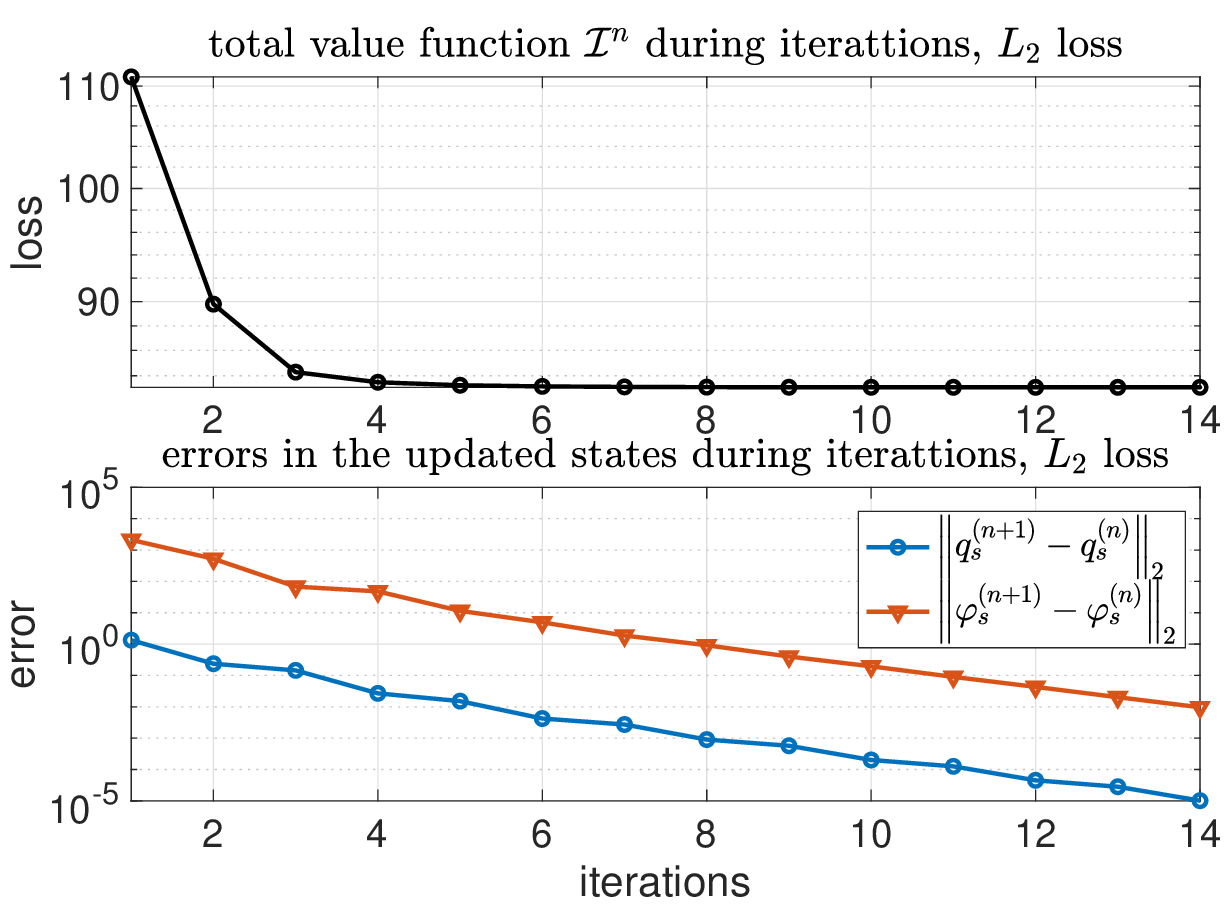} }
\subfloat{\includegraphics[scale=0.35]{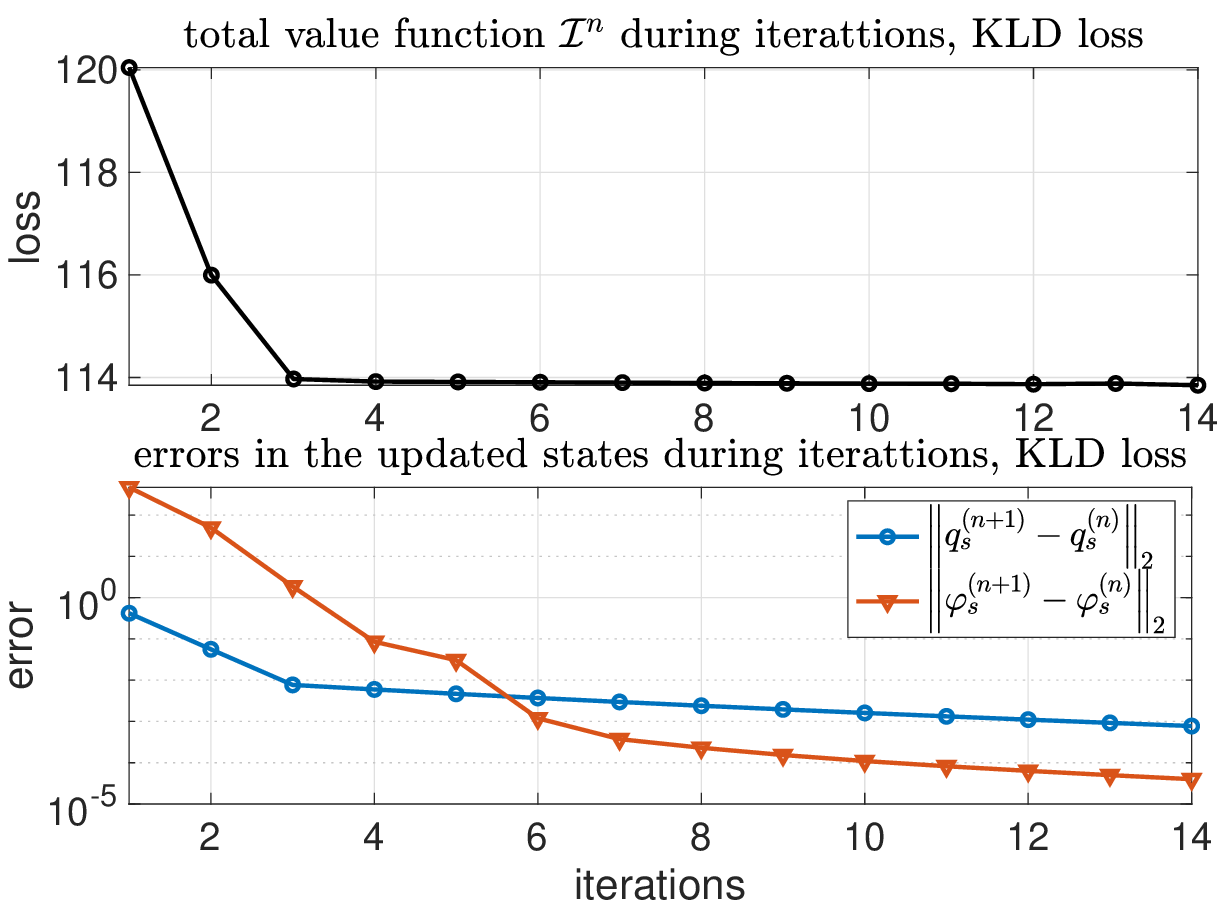} }

\caption{Total value function $\mathcal{I}^{n}=\mathcal{I}(q_{s}^{\left(n\right)},\alpha_{s}^{\left(n\right)})$
and the errors in the updated states $\lbrace q_{s}^{\left(n\right)},\varphi_{s}^{\left(n\right)}\rbrace $
during the updating iterations using both $L_{2}$ and KLD loss in the two-dimensional flow.}\label{fig:iterations_2d}
\end{figure}

\begin{figure}
\centering
\subfloat{
\includegraphics[scale=0.4]{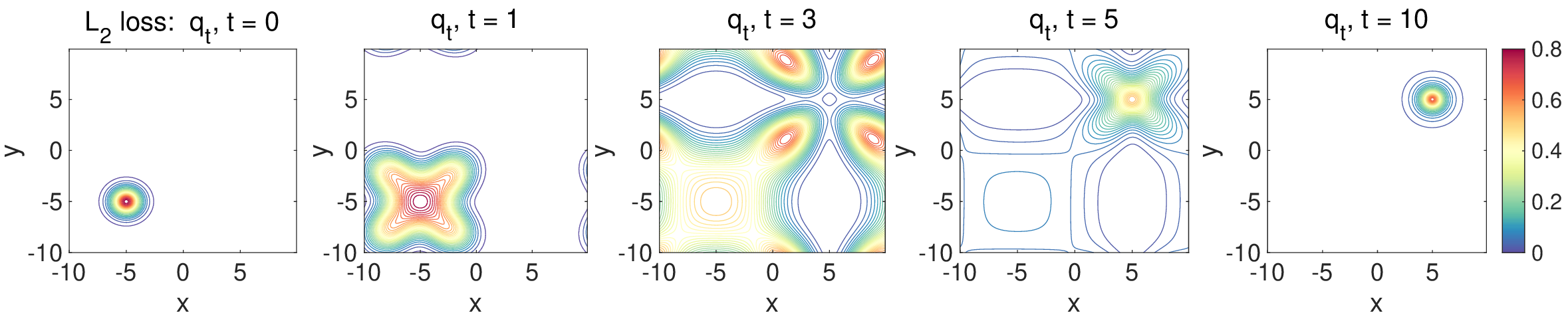}}
\vspace{-1em}
\addtocounter{subfigure}{-1}
\subfloat[optimal controlled solution $q_s$]{
\includegraphics[scale=0.4]{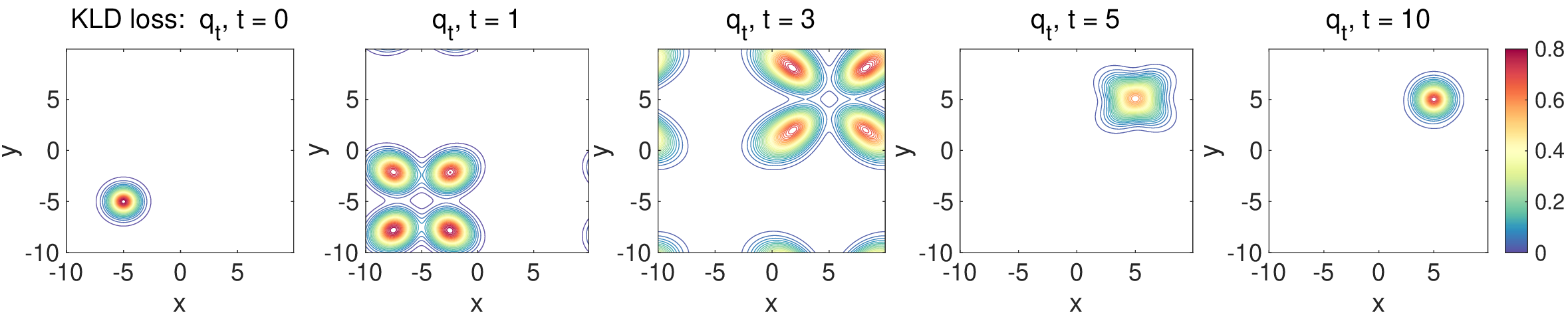}}

\subfloat{
\includegraphics[scale=0.4]{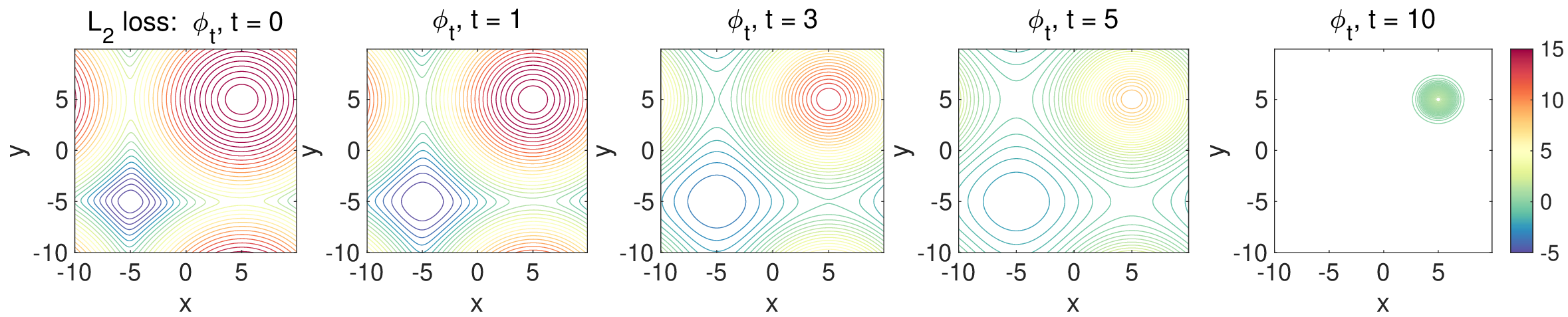}}
\vspace{-1em}
\addtocounter{subfigure}{-1}
\subfloat[optimal control function $\varphi_s$]{
\includegraphics[scale=0.4]{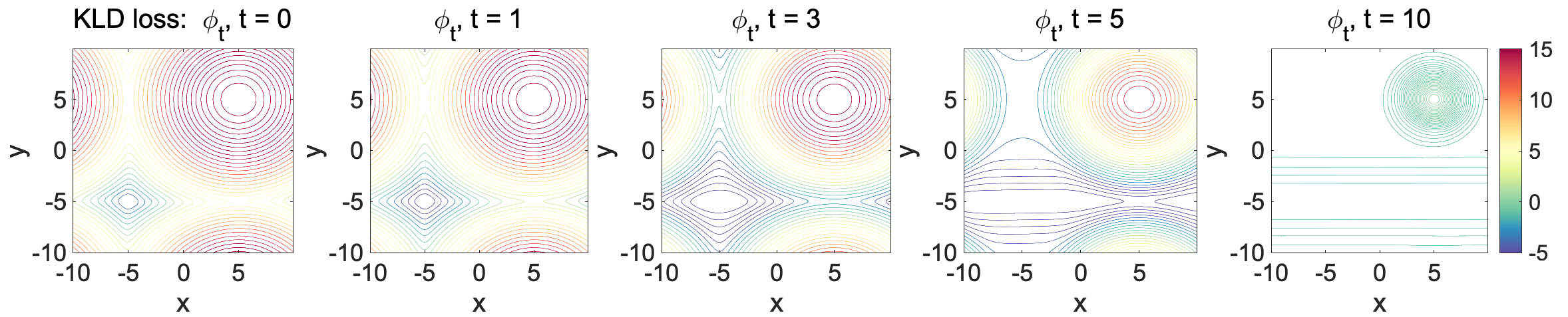}}
\caption{Optimal controlled solutions $q_s$ and control function $\varphi_s$ at several time instants $t$ under both the $L_2$ and KL-divergence loss functions.}\label{fig:control_q_2d}
\end{figure}

\section{Summary}\label{sec:summary}
In summary, we proposed mean field game models for controlling the nonlinear transport of tracer densities and flow vorticity fields under a unified mathematical framework. The mean field game models are solved through a forward continuity equation describing the tracer density/flow vorticity and a backward Hamilton-Jacobi equation for the optimal control action.
In addition, the general scalar vorticity field can be tracked by  Lagrangian tracers immersed in the advected flow field, thus an equivalent stochastic formulation can be derived providing a useful alternative approach for solving the controlled flow solution especially for the development of efficient numerical strategies in higher dimensional problems.
 The performance of the proposed MFG models and algorithms are then tested on the modified viscous Burger's equation displaying representative multiscaled and nonlinear dynamics.
Fast convergence and effective control performance are demonstrated in both the one- and two-dimensional test cases and under  loss functions in different metrics. 
For the future research, a more detailed convergence analysis for general systems is to be developed quantifying the approximation error and convergence rate of the new methods. The numerical strategy is also direct to apply the proposed algorithms to more general flow systems including stronger turbulent dynamics and instabilities.

\section*{Acknowledgments}
YG was partially supported by NSF under award DMS-2204288.
 

\bibliographystyle{plain}
\nocite{*}
\bibliography{refs}

\end{document}